\newcommand{\newsection}[1]{\setcounter{equation}{0} \section{#1}}
\newcommand{\bea}{\begin{eqnarray}}
\newcommand{\eea}{\end{eqnarray}}
\newcommand{\al}{\alpha}
\newcommand{\clb}{\mathcal{B}}
\newcommand{\clc}{\mathcal{C}}
\newcommand{\cld}{\mathcal{D}}
\newcommand{\cle}{\mathcal{E}}
\newcommand{\clf}{\mathcal{F}}
\newcommand{\clh}{\mathcal{H}}
\newcommand{\clk}{\mathcal{K}}
\newcommand{\cll}{\mathcal{L}}
\newcommand{\clm}{\mathcal{M}}
\newcommand{\cln}{\mathcal{N}}
\newcommand{\clp}{\mathcal{P}}
\newcommand{\clw}{\mathcal{W}}
\newcommand{\bz}{\bm{z}}
\newcommand{\bw}{\bm{w}}
\newcommand{\m}{\mathcal}
\newcommand{\z}{\mathcal B}
\newcommand{\vp}{\varphi}
\newcommand{\B}{\mathbb{B}}
\newcommand{\C}{\mathbb{C}}
\newcommand{\D}{\mathbb{D}}
\newcommand{\N}{\mathbb{N}}
\newcommand{\Z}{\mathbb{Z}}
\newcommand{\raro}{\rightarrow}
\def\textmatrix#1&#2\\#3&#4\\{\bigl({#1 \atop #3}\ {#2 \atop #4}\bigr)}
\def\dispmatrix#1&#2\\#3&#4\\{\left({#1 \atop #3}\ {#2 \atop #4}\right)}
\newcommand{\be}{\begin{equation}}
\newcommand{\ee}{\end{equation}}
\newcommand{\ben}{\begin{eqnarray*}}
\newcommand{\een}{\end{eqnarray*}}
\newcommand{\bi}{\begin{itemize}}
\newcommand{\ei}{\end{itemize}}
\newcommand\la{{\langle }}
\newcommand\ra{{\rangle}}
\theoremstyle{plain}
\newtheorem{Theorem}{\sc Theorem}[section]
\newtheorem{Lemma}[Theorem]{\sc Lemma}
\newtheorem{Proposition}[Theorem]{\sc Proposition}
\newtheorem{Corollary}[Theorem]{\sc Corollary}
\newtheorem{Definition}[Theorem]{\sc Definition}
\newtheorem{Example}[Theorem]{\sc Example}
\newtheorem{Remark}[Theorem]{\sc Remark}
\newtheorem{Note}[Theorem]{\sc Note}
\newtheorem{Question}{\sc Question}
\newtheorem{ass}[Theorem]{\sc Assumption}
\newcommand{\bt}{\begin{Theorem}}
\def\beginlem{\begin{Lemma}}
\def\beginprop{\begin{Proposition}}
\def\begincor{\begin{Corollary}}
\def\begindef{\begin{Definition}}
\def\beginexamp{\begin{Example}}
\def\beginrem{\begin{Remark}}
\def\beginq{\begin{Question}}
\def\beginass{\begin{ass}}
\def\beginnote{\begin{Note}}
\newcommand{\et}{\end{Theorem}}
\def\endlem{\end{Lemma}}
\def\endprop{\end{Proposition}}
\def\endcor{\end{Corollary}}
\def\enddef{\end{Definition}}
\def\endexamp{\end{Example}}
\def\endrem{\end{Remark}}
\def\endq{\end{Question}}
\def\endass{\end{ass}}
\def\endnote{\end{Note}}
\begin{document}

\title{Commuting row contractions with polynomial characteristic functions}

\dedicatory{To the memory of Ciprian Foias}

\author[Bhattacharjee]{Monojit Bhattacharjee}
\address{Department of Mathematics, Indian Institute of Technology Bombay, Powai, Mumbai, 400076, India}
\email{mono@math.iitb.ac.in, monojit.hcu@gmail.com}

\author[Haria] {Kalpesh J. Haria}
\address{School of Basic Sciences, Indian Institute of Technology Mandi, Mandi, 175005, Himachal Pradesh, India}
\email{kalpesh@iitmandi.ac.in, hikalpesh.haria@gmail.com}

\author[Sarkar]{Jaydeb Sarkar}
\address{Indian Statistical Institute, Statistics and Mathematics Unit, 8th Mile, Mysore Road, Bangalore, 560059, India}
\email{jay@isibang.ac.in, jaydeb@gmail.com}

\makeatletter
\@namedef{subjclassname@2020}{%
	\textup{2020} Mathematics Subject Classification}
\makeatother
\subjclass[2020]{47A45, 47A20, 47A48, 47A56}
\keywords{Characteristic functions, analytic model, nilpotent operators, operator-valued polynomials, Gleason's problem, factorizations.}

\begin{abstract}
A characteristic function is a special operator-valued analytic function defined on the open unit ball of $\mathbb{C}^n$ associated with an $n$-tuple of commuting row contraction on some Hilbert space. In this paper, we continue our study of the representations of $n$-tuples of commuting row contractions on Hilbert spaces, which have polynomial characteristic functions. Gleason's problem plays an important role in the representations of row contractions. We further complement the representations of our row contractions by proving theorems concerning factorizations of characteristic functions. We also emphasize the importance and the role of the noncommutative operator theory and noncommutative varieties to the classification problem of polynomial characteristic functions.
\end{abstract}

\maketitle

\newsection{Introduction}

Identifying and then computing a complete unitary invariant of (tuples of) bounded linear operators on Hilbert spaces is one of the central objects in operator theory. From this point of view, the notion of characteristic function of contractions on Hilbert spaces stands out in its breadth of applications in function theory and operator theory.

Let $T=(T_1, \ldots, T_n)$ be an $n$-tuple of commuting operators on a Hilbert space $\clh$, and let $T$ be a row contraction (that is, $\sum_{i=1}^n T_i T_i^* \leq I_{\clh}$). The characteristic function of $T$ is the $\clb(\cld_T, \cld_{T^*})$-valued analytic function
\[
\theta_T(z_1, \ldots, z_n) = [-T + D_{T^*}\big(I_{\clh} - \sum_{i=1}^n z_i T_i^*\big)^{-1}Z D_T]|_{\cld_T},
\]
for all $(z_1, \ldots, z_n) \in \B^n$, where $\B^n$ denotes the open unit ball in $\C^n$, $D_T = (I - T^*T)^{\frac{1}{2}}$ and $\cld_{T} = \overline{\mbox{ran}} D_T$ (see Section \ref{sect-2} for more details).

In particular, if $n=1$, then the above definition of $\theta_T$ becomes the well known and classical Sz.-Nagy and Foias characteristic function of the single contraction $T$ \cite{NF}. In this case, clearly, $\theta_T$ admits a power series expansion on the disc $\D = \{z \in \C: |z|<1\}$. This, of course, immediately raises the natural question of the relationship between the class of polynomial characteristic functions and the structure of corresponding contractions. To some extent, the work of Foias and the third author \cite{SF12} gives a satisfactory answer to this question. For instance: The characteristic function $\theta_T$ of a completely
nonunitary contraction $T$ on a separable, infinite dimensional,
complex Hilbert space $\clh$ is a polynomial if and only if there
exist three closed subspaces $\clh_1, \clh_0, \clh_{-1}$ of $\clh$
with $\clh = \clh_1 \oplus \clh_0 \oplus \clh_{-1}$, a pure isometry
$S$ on $\clh_1$, a nilpotent $N$ on $\clh_0$, and a pure
co-isometry $C$ on $\clh_{-1}$, such that $T$ admits the following matrix representation
\[
T = \begin{bmatrix} S & * & *\\0 & N & *\\0& 0& C
\end{bmatrix}.
\]
Moreover, the dimension of $\ker S^*$ and dimension of $\ker C$ are unitary invariants of $T$ and that $N$, up to a quasi-similarity, is uniquely determined by $T$ (see \cite[Sections 4 and 5]{SF12}). In the follow-up paper, Foias, Pearcy and the third author \cite{FPS17} proved the following analytic result: If $\theta_T$ is a polynomial of degree $m$, then there exist a Hilbert space $\clm$, a nilpotent operator $N$ of order $m$, a coisometry $V_1 \in \clb(\cld_{N^*} \oplus \clm, \cld_{T^*})$, and an isometry $V_2  \in \clb(\cld_T, \cld_N \oplus \clm)$, such that
\[
\theta_T = V_1 \begin{bmatrix} \theta_N & 0 \\ 0 & I_{\clm} \end{bmatrix} V_2.
\]
On the other hand, the approach of \cite{SF12} was continued and extended to $n$-tuples of noncommuting row contractions setting by Popescu in \cite{Po13}. Also, the results of \cite{FPS17} were further extended to Popescu's noncommutative setting in \cite{HMS17}.

It is worthwhile to note that Popescu (see \cite{P10} and other references therein) first recognized that the notion of characteristic functions, a special class of multi-analytic operators \cite{Po06b}, plays a central role in multivariable operator theory and noncommutative function theory. Moreover, his approach to noncommutative verities links up with the noncommutative operator theory and commutative operator theory (see
\cite{Po06a, P10} and Section \ref{sect-5}).

This paper aims to complete the classification problem of contractions, which admits polynomial characteristic functions. More precisely, we aim to classify $n$-tuples of commuting row contractions, which admits polynomial characteristic functions.

The question of the structure of $n$-tuples of commuting row contractions is important in its own right. However, on the other hand, Popescu's approach to noncommutative verities unifies many analytic and geometric questions concerning $n$-tuples of commuting row contractions. From this point of view, it is also necessary to examine the noncommutative operator theoretic technique and the classifications of noncommuting row contractions admitting polynomial characteristic functions to our classification problem of tuples of commuting row contractions. As we will see, some of the present techniques and results are similar to the one variable case and the noncommutative case. However, commutativity property (a constrained property, as identified by Popescu in \cite{Po06a, P10}) brings out more intrinsic function theoretic features to the classification problem. Indeed, natural and satisfactory versions of the classification problem (for instance, see  Theorem \ref{Drury Arveson Shift}) are related to the notion of Gleason’s problem (see Definition \ref{regularity definition}). In this context, we also refer to the paragraph following Theorem \ref{rm1}.

The remaining part of the paper is organized as follows: In Section \ref{sect-2}, we briefly outline a few key facts of Drury-Arveson space, $n$-tuples of commuting row contractions and characteristic functions of commuting row contractions. Section \ref{sect-3} deals with the structure of $n$-tuples of commuting row contractions, which admits polynomial characteristic functions. Section \ref{sect-4} is devoted to the study of factorizations of characteristic functions of $n$-tuples of noncommutative row contractions. In Section \ref{sect-5}, we continue our discussion of factorizations of characteristic functions in the setting of noncommutative varieties. In Section \ref{sect-6} we discuss some unitary invariants of $n$-tuples of commuting row contractions, which admits polynomial characteristic functions. The final section is devoted to an example to justify the regularity assumption on commuting tuples of row contractions. 

From the multivariable operator theory point of view, this is a sequel to the papers \cite{SF12} and \cite{FPS17} by Foias, and Foias and Pearcy, respectively, and the third author.

\newsection{Preliminaries}\label{sect-2}

In this section, we recall basic definitions and notations used in the rest of the paper. Throughout the paper, Hilbert spaces will be denoted by $\clh_1$, $\clh_2$, $\cle$, $\cle_*$, etc. The set of all bounded linear operators from $\clh_1$ to $\clh_2$ will be denoted by $\clb(\clh_1, \clh_2)$. When $\clh_1 = \clh_2$, one writes simply $\clb(\clh_1)$ instead of $\clb(\clh_1, \clh_1)$. Now let $\{T_1, \ldots, T_n\} \subseteq \clb(\clh)$. We say that $T = (T_1, \ldots, T_n)$ is a \textit{row contraction} (or \textit{spherical contraction}) if the row operator $T : \clh^n \raro \clh$, defined by
\[
T(h_1,\ldots, h_n) = \sum_{i=1}^n T_ ih_i \quad \quad (h_1, \ldots, h_n \in \clh),
\]
is a contraction. It is clear that $T$ is a row contraction if and only if $\sum\limits_{i=1}^n \|T_i h_i\|^2 \leq \|h\|^2 $ for all $h \in \clh$, or equivalently $\sum_{i=1}^n T_i T^*_i \leq I_{\clh}$. A row contraction  $T$ is said to be \textit{commuting row contraction} if $T_i T_j = T_j T_i$ for $i, j= 1,\ldots, n$.

A typical example of a commuting row contraction is the $n$-tuple of multiplication operator $(M_{z_1}, \ldots, M_{z_n})$ on the \textit{Drury-Arveson space} $H^2_n$, where $H^2_n$ is the reproducing kernel Hilbert space corresponding to the kernel
\[
k(\bz, \bw) = (1 - \sum\limits_{i=1}^n z_i \bar{w}_i)^{-1} \quad \quad (\bz, \bw \in \mathbb{B}^n).
\]
Here $\mathbb{B}^n$ denotes the open unit ball in $\mathbb{C}^n$ and $\bz$ (and $\bw$ etc.) denotes an element in $\mathbb{C}^n$, that is, $\bz = (z_1, \ldots, z_n) \in \mathbb{C}^n$. Then
\[
H^2_n = \{f = \sum_{\alpha \in \Z_+^n} a_{\alpha} \bz^{\alpha}: a_{\alpha} \in \mathbb{C} \mbox{~and~} \|f\|^2 := \sum_{\alpha \in \Z_+^n} \frac{|a_{\alpha}|^2}{\gamma_{\alpha}} < \infty\},
\]
where $ \Z_+ = \{0, 1, 2, \ldots\} $, $\alpha = (\alpha_1, \ldots, \alpha_n) \in \Z_+^n$ and
\[
\gamma_{\alpha} := \frac{|\alpha|!}{\alpha!} = \frac{(\sum_{i=1}^n \alpha_i)!}{\alpha_1! \cdots \alpha_n!},
\]
is the multinomial coefficient. The $\cle$-valued Drury-Arveson space will be denoted by $H^2_n(\cle)$. In this case, the representation of $H^2_n(\cle)$ is the same as $H^2_n$ above but replacing $a_{\alpha} \in \mathbb{C}$ with $a_{\alpha} \in \cle$ and $|a_{\alpha}|$ with $\|a_{\alpha}\|_{\cle}$. Now identifying $H^2_n(\cle)$ with the Hilbert space tensor product $H^2_n \otimes \cle$ (via $\bz^{\alpha} \eta \mapsto \bz^{\alpha} \otimes \eta$, $\alpha \in \Z_+^n$ and $\eta \in \cle$), we see that $(M_{z_1}, \ldots, M_{z_n})$ on $H^2_n(\cle)$ and $(M_{z_1} \otimes I_{\cle}, \ldots, M_{z_n} \otimes I_{\cle})$ on $H^2_n \otimes \cle$ are unitarily equivalent. We shall frequently make use of this identification. Given a commuting tuple $M = (M_1, \ldots, M_n)$ on a Hilbert space $\clh$, we often say that $M$ is a \textit{Drury-Arveson shift} if there exists a Hilbert space $\clw$ such that $M$ and $(M_{z_1}, \ldots, M_{z_n})$ on $H^2_n (\clw)$ are unitarily equivalent.

Also recall that a holomorphic function $\vp : \mathbb{B}^n \raro \clb(\cle, \cle_*)$ is said to be a (Drury-Arveson) \textit{multiplier} if
\[
\vp H^2_n(\cle) \subseteq H^2_n(\cle_*).
\]
In this case, by virtue of the closed graph theorem, it follows that the multiplication operator $M_{\vp} : H^2_n(\cle) \raro H^2_n(\cle_*)$ (where $M_{\vp}f = \vp f$ for all $f \in H^2_n(\cle)$) is a bounded linear operator. The set of all multipliers will be denoted by $\clm(\cle, \cle_*)$. It also follows that $\clm(\cle, \cle_*)$ is a Banach space relative to the operator norm
\[
\|\vp\| := \|M_{\vp}\|_{\clb(H^2_n(\cle), H^2_n(\cle_*))} \quad \quad (\vp \in \clm(\cle, \cle_*)).
\]
Now let $T = (T_1, \ldots, T_n)$ be a row contraction on $\clh$. The \textit{defect operators} and \textit{defect spaces} of $T$ are given by
\[
D_{T} = (I- T^* T)^\frac{1}{2} \in \z(\clh^n) \quad \mbox{and} \quad D_{{T}^*} = (I- TT^*)^\frac{1}{2} \in \z(\m H),
\]
and
\[
\m D_{T} = \overline{\mbox{ran}} D_T \subseteq \clh^n \quad \mbox{and} \quad \m D_{T^*} = \overline{\mbox{ran}} D_{T^*} \subseteq \m H,
\]
respectively. For any commuting row contraction $T = (T_1, \ldots, T_n)$ on $\clh$, the \textit{characteristic function} of $T$ is a $\clb(\cld_T, \cld_{T^*})$-valued analytic function $\theta_T: \mathbb{B}^n \raro \clb(\cld_T, \cld_{T^*})$ defined by
\begin{equation}\label{eq-ch fn commutative}
\theta_T(\bz) = \Big(-T + D_{T^*}\big(I_{\clh} - Z T^*\big)^{-1}Z D_T\Big)|_{\cld_T} \quad \quad (\bz \in \B^n),
\end{equation}
where $Z = (z_1 I_{\clh}, \ldots, z_n I_{\clh})$ is a row operator on $\clh$ and so $ZT^* = \sum\limits_{i=1}^n z_i T_i^*$ for all $\bz \in \B^n$. Also we define $T^{\alpha} = T_1^{\alpha_1} \cdots T_n^{\alpha_n}$ and $T^{*\alpha} = T_1^{*\alpha_1} \cdots T_n^{*\alpha_n}$ for all $\alpha = (\alpha_1, \ldots, \alpha_n) \in \mathbb Z_+^n$, and $P_j: \clh^n \to \m H$ by
\[
P_j (h_1, \dots, h_n) = h_j \quad \quad (h_1, \ldots, h_n \in \clh).
\]
Then
\[
\theta_T(\bz) = \Big(-T + D_{T^*} \sum_{\substack{\alpha \in \Z_+^n\\ j=1}}^n \gamma_{\alpha} T^{*\al} \bz^{\al + e_j} P_j D_T\Big)|_{\cld_T}.
\]
If we define $\theta_{T, \alpha}$, the coefficient of $\bz^{\alpha}$, $\alpha \in\Z_+^n$, in the Taylor series expansion of $\theta_T$ as
\[
\theta_{T, \alpha} =
\begin{cases} - T|_{\cld_T} & \mbox{if}~ \alpha = 0
\\
\displaystyle\sum_{j=1}^n \gamma_{\alpha - e_j} D_{T^*} T^{*(\al - e_j)}  P_j D_T|_{\cld_T} &  \mbox{if}~ \alpha \neq 0, \end{cases}
\]
then $\theta_T(\bz) = \sum_{|\alpha| \geq 0} \theta_{T, \alpha} \bz^{\al}$, $\bz \in \B^n$. In what follows, we adopt the standard convention that
\[
\gamma_{\alpha - e_j} = 0 \quad \mbox{and} \quad T^{*(\al - e_j)} = I \quad \quad (\alpha \in \Z_+^n, \alpha_j = 0).
\]
It is now natural to define polynomial characteristic functions. Let $T$ be an $n$-tuple of commuting row contraction on $\clh$ and let $m$ be a natural number. We say that the characteristic function $\theta_T$ is a \textit{polynomial of degree $m$} if
\[
\theta_{T, \alpha} \neq 0,
\]
for some $|\alpha| = m$ and $\theta_{T, \beta} = 0$ for all $|\beta| > m$. If $\theta_T(\bz) \equiv -T|_{\cld_T}$, $\bz \in \mathbb{B}^n$, then we say that $\theta_T$ is a \textit{polynomial of degree zero}. Throughout this paper, we make the convention that the degree of the \textit{zero polynomial} is zero.

A commuting tuple $N = (N_1, \ldots, N_n)$ on $\clh$ is said to be \textit{nilpotent of order $m ( > 1)$} if
\[
N^{\alpha} = 0 \quad \mbox{and} \quad N^{\beta} \neq 0,
\]
for all  $\al$  in $\Z_+^n$ with $|\alpha| = m$  and for some $\beta$ in $\Z_+^n$ such that $|\alpha| - |\beta| = 1$. For a commuting row contraction $T = (T_1, \ldots, T_n)$ on $\clh$ we define
\begin{equation}\label{cnc}
\clh_c := \Big\{ h\in \clh : \sum_{|\al|=k} \|{T^*}^\al h \|^2 = \|h\|^2 \quad \text{for all} \quad k \in \Z_+ \Big\} .
\end{equation}
Clearly, $\clh_c$ is a closed and joint $(T_1^*, \ldots, T_n^*)$-invariant subspace. Moreover, $\clh_c$ is \textit{maximal}, that is, $\clh_c$ is the largest closed subspace of $\clh$ on which $T^* : \clh \raro \clh^n$ acts isometrically. The row contraction $T$ is said to be a \textit{completely non-coisometric} (c.n.c) row contraction if $\clh_c =\{0\}$. The row contraction $T$ is said to be \textit{pure} if
\[
\lim_{k \to \infty} \sum_{\substack{\al \in \Z_+^n \\ |\al|=k}} \|{T^*}^\al h \|^2 = 0 \quad \quad (h\in \clh).
\]
As an example, we note that the multiplication tuple $(M_{z_1}, \ldots, M_{z_n})$ on a vector-valued Drury-Arveson space $H^2_n(\cle)$ is a pure row contraction.

Finally, we recall that a pair of commuting $n$-tuples of row contractions $(T_1,\ldots, T_n)$ and $(T^{'}_1,\ldots, T^{'}_n)$ are said to be \textit{unitary equivalent} if there exists a unitary $U : \clh \raro \clh^{'}$ such that $T_i=UT^{'}_i U^*$ for all $i=1, \ldots, n$.

\newsection{Polynomial Characteristic Functions}\label{sect-3}

This section presents the representations of $n$-tuples of commuting row contractions, which admits polynomial characteristic functions. Gleason's problem plays a crucial role in our consideration. We begin with the following key lemma.

\begin{Lemma}\label{evaluation of adjoint T_j}
Let $T = (T_1, \ldots, T_n)$ be a commuting row contraction on a Hilbert space $\clh$. Suppose $\theta_T$ is a polynomial of degree $m$. If $\alpha \in \Z_+^n$ and $|\alpha| \geq m+1$, then
\[
T_i^* (T^{\al} D_{T^*}) =
 \frac{\al_i}{|\al|} (T^{\al - e_i} D_{T^*}),
\]
for all $i \in \{1, \ldots, n\}$.
\end{Lemma}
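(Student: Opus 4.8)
The plan is to translate the hypothesis ``$\theta_T$ is a polynomial of degree $m$'' into a single operator identity via the vanishing of high-order Taylor coefficients, and then to strip away the defect operator and the coordinate projections one at a time. First I would record that being a polynomial of degree $m$ forces $\theta_{T,\al}=0$ whenever $|\al|\geq m+1$, which by the coefficient formula reads
\[
\sum_{j=1}^n \gamma_{\al-e_j}\, D_{T^*} T^{*(\al-e_j)} P_j D_T \big|_{\cld_T} = 0 \qquad (|\al|\geq m+1).
\]
Since $D_T$ kills $\cld_T^\perp=\ker D_T$, the operator $\sum_j \gamma_{\al-e_j} D_{T^*}T^{*(\al-e_j)}P_j D_T$ already vanishes off $\cld_T$, so the restriction may be dropped and we obtain an identity of operators on all of $\clh^n$. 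Taking Hilbert-space adjoints (using $D_T^*=D_T$, $D_{T^*}^*=D_{T^*}$, $P_j^{**}=P_j$ and $(T^{*(\al-e_j)})^*=T^{\al-e_j}$) converts it into
\[
\sum_{j=1}^n \gamma_{\al-e_j}\, D_T P_j^* T^{\al-e_j} D_{T^*} = 0 \qquad (|\al|\geq m+1),
\]
now an identity of operators $\clh\to\clh^n$.

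The key manipulation is to multiply this on the left by $D_T$ so as to dispose of the square root: using $D_T^2=I-T^*T$ together with $T P_j^*=T_j$ (hence $T^*T P_j^*=T^*T_j$) gives $D_T^2 P_j^*=P_j^*-T^*T_j$, and the identity becomes
\[
\sum_{j=1}^n \gamma_{\al-e_j} P_j^* T^{\al-e_j} D_{T^*} \;-\; T^*\sum_{j=1}^n \gamma_{\al-e_j} T_j T^{\al-e_j} D_{T^*} = 0.
\]
In the second sum I would apply commutativity, $T_j T^{\al-e_j}=T^\al$, and the multinomial identity $\sum_j \gamma_{\al-e_j}=\gamma_\al$ to collapse it to $\gamma_\al T^* T^\al D_{T^*}$. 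Writing the row adjoint coordinatewise as $T^*=\sum_j P_j^* T_j^*$, the whole expression becomes
\[
\sum_{j=1}^n P_j^*\Big(\gamma_{\al-e_j}\, T^{\al-e_j} D_{T^*} - \gamma_\al\, T_j^* T^\al D_{T^*}\Big) = 0.
\]

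The crucial step is the separation at the end: because the embeddings $P_1^*,\dots,P_n^*$ have pairwise orthogonal ranges in $\clh^n$ and are injective, a sum $\sum_j P_j^* X_j$ vanishes if and only if every $X_j$ vanishes. Applying this coordinatewise yields $\gamma_\al\, T_j^* T^\al D_{T^*}=\gamma_{\al-e_j}\, T^{\al-e_j} D_{T^*}$ for each $j$, and dividing by $\gamma_\al$ and using $\gamma_{\al-e_j}/\gamma_\al=\al_j/|\al|$ gives exactly the asserted formula (the case $\al_j=0$ is automatic, both sides then being $0$). I expect the genuinely load-bearing idea to be the ``multiply by $D_T$'' trick, which replaces the intractable $D_T P_j^*$ by $D_T^2 P_j^* = P_j^*-T^*T_j$ and thereby removes the square root; the remaining points needing care are merely the justification that the restriction to $\cld_T$ can be discarded and the bookkeeping of the boundary conventions $\gamma_{\al-e_j}=0$, $T^{\al-e_j}=I$ when $\al_j=0$. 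The algebra itself is routine once the adjoint-and-multiply strategy and the orthogonality of the $P_j^*$ are in place.
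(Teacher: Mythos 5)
Your proposal is correct and follows essentially the same route as the paper: both arguments take the adjoint of the vanishing Taylor coefficient $\theta_{T,\al}$, apply $D_T$ so that $D_T^2=I-T^*T$ can be expanded, and then use commutativity together with $\sum_j\gamma_{\al-e_j}=\gamma_\al$ and $\gamma_{\al-e_i}/\gamma_\al=\al_i/|\al|$ to isolate $T_i^*T^\al D_{T^*}$. The paper writes $D_T^2$ as an explicit $n\times n$ operator matrix and reads off the $i$-th row, whereas you keep everything in row/column operator form and separate coordinates via the $P_j^*$ at the end; this is only a difference of presentation, not of substance.
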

\begin{proof} Fix $i \in \{1, \ldots, n\}$. For each $|\alpha| \geq  m+1$, since $\theta_{T, \alpha}^* = 0$, it follows that
\[
D_T^2 \sum_{j=1}^n \gamma_{\al - e_j} P_j^* T^{\al - e_j} D_{T^*} = 0.
\]
Note that $P_j^*: \m H \raro \clh^n$ is given by
\[
P_j^*(h) = (0, \ldots, 0, \underbrace{h}_{\text{j-th position}}, 0, \ldots, 0),
\]
for all $h \in \clh$. Therefore, using matrix representation of the operator $D_T^2$, we have
\[
\begin{bmatrix}
I-T_1^*T_1 & -T_1^*T_2  \cdots & -T_1^*T_n  \cr
-T_2^*T_1  & I-T_2^*T_2 \cdots & -T_2^*T_n  \cr
\vdots     & \vdots            & \vdots     \cr
-T_n^*T_1  & -T_n^*T_2  \cdots & I-T_n^*T_n \cr
\end{bmatrix} \begin{bmatrix}
\delta_{\al_1} \cr \delta_{\al_2} \cr \vdots \cr \delta_{\al_n}
\end{bmatrix}= \begin{bmatrix}
0 \cr 0 \cr \vdots \cr 0
\end{bmatrix}
\]
where
\[
\delta_{\al_j} =
\gamma_{\alpha-e_j} T_j^{\al_j-1} T_1^{\al_1} \cdots T_n^{\al_n}D_{T^*},
\]
for all $\al =(\al_1,\ldots,\al_n) \in \Z_+^n$ with $ \al \geq m+1 $ and $j=1,\ldots, n$. From the above identity, we have
\[
\sum_{\substack{j =1 \\ j\neq i}}^n - T_i^*T_j \delta_{\al_j} + (I - T_i^* T_i) \delta_{\al_i} = 0,
\]
and hence
\begin{align*}
\delta_{\al_i} =  T_i^*\big( \sum_{j=1}^n T_j \delta_{\al_j}\big).
\end{align*}
Replacing $\delta_{\al_j} =
\gamma_{\alpha-e_j} T_j^{\al_j-1} T_1^{\al_1} \cdots T_n^{\al_n}D_{T^*}$ in the above identity, we get
\begin{align*}
\gamma_{\alpha - e_i} T_i^{\al_i - 1} T_1^{\al_1} \cdots T_n^{\al_n}D_{T^*}
& = T_i^*\big( \sum_{j = 1}^n T_j  (\gamma_{\alpha - e_j} T_j^{\al_j - 1} T_1^{\al_1} \cdots T_n^{\al_n}D_{T^*} )\big)
\\
& =\big[\sum_{j=1}^n \gamma_{\alpha- e_j} \big] (T_i^* T_1^{\al_1} \cdots T_n^{\al_n}D_{T^*} )
\\
&= \big[\sum_{j=1}^n \gamma_{\alpha - e_j} \big] (T_i^* (T^\alpha D_{T^*})).
\end{align*}
Since $ |\alpha| \geq m+ 1 $, $ \alpha_k \geq 1 $ for some $ k\in \{1, \ldots, n\}$. Therefore $ \gamma_{\alpha-e_k} \neq 0$  for some $ k\in \{1, \ldots, n\} $.
Then
\begin{align*}
T_i^* (T^\alpha D_{T^*} )& = \dfrac{\gamma_{\alpha-e_i} }{\big[\displaystyle \sum_{j=1}^n \gamma_{\alpha - e_j}\big]} T_i^{\al_i - 1} T_1^{\al_1} \cdots T_n^{\al_n}D_{T^*}
\\
&=  \dfrac{\gamma_{\alpha-e_i} }{\big[\displaystyle\sum_{j=1}^n \gamma_{\alpha - e_j}\big]} (T^{\al - e_i} D_{T^*}).
\end{align*}
Finally, since
\[
\dfrac{\gamma_{\alpha-e_i} }{\big[ \displaystyle\sum_{j=1}^n \gamma_{\alpha-e_j} \big] }  =  \frac{\al_i}{|\al|},
\]
it follows that $T_i^* (T^{\al} D_{T^*}) =
\frac{\al_i}{|\al|} (T^{\al - e_i} D_{T^*})$.
\end{proof}

\begin{Lemma}\label{orthogonal}
Let $T$ be an $n$-tuple of commuting row contraction on $\clh$. If $\theta_T$ is a polynomial of degree $m$, then
\[
T^{\al}\cld_{T^*} \perp T^{\beta}\cld_{T^*},
\]
for all $\al, \beta \in \Z_+^n$, $\al \neq \beta$ and $ |\al|, |\beta| \geq m$.
\end{Lemma}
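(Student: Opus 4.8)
The plan is to prove the equivalent operator identity $D_{T^*}T^{*\beta}T^{\alpha}D_{T^*}=0$, i.e. $\langle T^\alpha D_{T^*}x,\,T^\beta D_{T^*}y\rangle=0$ for all $x,y\in\clh$; throughout I write $u_\gamma:=T^\gamma D_{T^*}$. The engine is Lemma \ref{evaluation of adjoint T_j}, which I read as $T_i^* u_\gamma=\frac{\gamma_i}{|\gamma|}u_{\gamma-e_i}$ whenever $|\gamma|\geq m+1$; this is exactly the action of $M_{z_i}^*$ on the Drury--Arveson monomial $\bz^\gamma$, so the $u_\gamma$ with $|\gamma|\ge m$ ought to behave like (weighted) monomials. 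The first thing I would extract is that, for $|\gamma|\geq m+1$,
\[
\sum_{i=1}^n T_iT_i^* u_\gamma=\sum_{i=1}^n \frac{\gamma_i}{|\gamma|}T_i u_{\gamma-e_i}=\frac{1}{|\gamma|}\Big(\sum_{i=1}^n\gamma_i\Big)u_\gamma=u_\gamma ,
\]
so that $D_{T^*}^2u_\gamma=(I-TT^*)u_\gamma=0$, whence $\|D_{T^*}u_\gamma\|^2=\langle D_{T^*}^2u_\gamma,u_\gamma\rangle=0$ and $D_{T^*}u_\gamma=0$. Equivalently $T^\gamma\cld_{T^*}\perp\cld_{T^*}$ for every $|\gamma|\geq m+1$; call this Fact A. It is the only tool below that is not degree-preserving, and it is what terminates the reductions.

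For the equal-length case $|\alpha|=|\beta|=k\ge m$ with $\alpha\neq\beta$ I would use a gap-preserving ``lift and lower'' step. Pick $p$ with $\alpha_p>\beta_p$ and $q$ with $\alpha_q<\beta_q$ (these exist since the lengths agree and $\alpha\ne\beta$). Writing $T^\beta D_{T^*}=\frac{k+1}{\beta_q+1}T_q^* u_{\beta+e_q}$, moving $T_q$ onto the left factor, and then applying the relation to $T_p^* u_{\beta+e_q}$ (legitimate because everything is evaluated at length $k+1\ge m+1$) gives
\[
\langle u_\alpha x,\,u_\beta y\rangle=\frac{\beta_p}{\beta_q+1}\,\langle u_{\alpha+e_q-e_p}x,\,u_{\beta+e_q-e_p}y\rangle .
\]
Both indices stay at length $k\ (\ge m)$ and the difference $\alpha-\beta$ is unchanged, while $\beta_p$ drops by one (and $\alpha_p\ge\beta_p+1$ stays positive, so the step remains valid). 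Iterating with the same $p,q$ drives $\beta_p$ to $0$, at which point the prefactor, hence the inner product, vanishes.

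For unequal lengths take $|\alpha|>|\beta|\geq m$ and transfer $T^{*\beta}$ onto $u_\alpha$ one factor at a time via the relation: so long as the running degree is $\ge m+1$, each factor lowers it by one. If the gap satisfies $|\alpha|-|\beta|\geq m+1$, all of $T^{*\beta}$ is absorbed and the process ends either at $0$ (if $\alpha\not\ge\beta$, since some $T_i^*$ meets a zero exponent) or at a nonzero multiple of $\langle D_{T^*}u_{\alpha-\beta}x,y\rangle$ with $|\alpha-\beta|\ge m+1$, which is $0$ by Fact A. Thus the lemma is already settled for all pairs of equal length and for all pairs whose lengths differ by at least $m+1$.

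The step I expect to be the crux is the remaining band $1\le|\alpha|-|\beta|\le m$; indeed a short layer-induction (reducing the higher index by one $T_i^*$ and invoking the cases already handled) funnels everything down to the single minimal case $|\alpha|=m+1,\ |\beta|=m$. The difficulty is structural: every manipulation coming from Lemma \ref{evaluation of adjoint T_j} -- pairwise lowering, lifting, and transferring $T^{*\beta}$ -- preserves the gap $|\alpha|-|\beta|$, whereas the only gap-breaking device, Fact A, needs the surviving index to have length $\ge m+1$. A direct reduction of $\langle u_\alpha x,u_\beta y\rangle$ with $|\beta|=m$ instead lands on an inner product one of whose degrees is $<m$, i.e. outside the range where the relation holds, precisely because the degree-$m$ coefficients $\theta_{T,\gamma}$ need not vanish. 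I therefore expect this boundary case to require genuinely more than Lemma \ref{evaluation of adjoint T_j}: I would bring in the Drury--Arveson model through the Poisson kernel identity $K^*(\bz^\gamma\otimes d)=T^\gamma D_{T^*}d$, and combine the already-proved equal-length orthogonality with the row-contraction telescoping $\sum_{|\delta|=k}\gamma_\delta\,T^\delta D_{T^*}^2 T^{*\delta}=\Sigma_k-\Sigma_{k+1}$, where $\Sigma_k=\sum_{|\delta|=k}\gamma_\delta T^\delta T^{*\delta}$, so as to realize the $u_\gamma\ (|\gamma|\ge m)$ as an orthogonal family of monomials and thereby dispose of the boundary layer.
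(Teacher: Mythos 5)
Your reductions for equal lengths and for $|\al|-|\beta|\ge m+1$ are correct (Fact A, the lift-and-lower recursion $\la u_\al x,u_\beta y\ra=\tfrac{\beta_p}{\beta_q+1}\la u_{\al+e_q-e_p}x,u_{\beta+e_q-e_p}y\ra$, and the transfer argument all check out), but the band $1\le|\al|-|\beta|\le m$ is a genuine gap: you explicitly leave it unproved, and the Poisson-kernel/telescoping strategy you gesture at is not carried out. Moreover, your diagnosis that this case ``requires genuinely more than Lemma \ref{evaluation of adjoint T_j}'' is wrong. The missing idea --- and the one the paper uses to handle \emph{all} cases uniformly --- is a degree boost that costs only a constant: choose $j$ with $\al_j<\beta_j$ (after swapping $\al$ and $\beta$ if necessary, such a $j$ always exists when $\al\neq\beta$), fix any $k\neq j$, and use the identity $u_\gamma=\tfrac{|\gamma|+1}{\gamma_k+1}T_k^*u_{\gamma+e_k}$ repeatedly to write $u_\beta=c\,T_k^{*(m+1)}u_{\beta+(m+1)e_k}$ with $c>0$. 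Moving $T_k^{m+1}$ to the left factor gives $\la u_\al x,u_\beta y\ra=c\,\la u_{\al+(m+1)e_k}x,u_{\beta+(m+1)e_k}y\ra$, and now every index in sight has length at least $2m+1$. Transferring $T_j^{*\beta_j}$ onto the left factor and applying Lemma \ref{evaluation of adjoint T_j} one factor at a time, the $j$-th exponent on the left reaches $0$ after $\al_j$ steps while the total degree never drops below $m+1$ (it stays $\ge|\al|+m+1-\al_j\ge m+1$); since $\beta_j>\al_j$, the next application produces the coefficient $\tfrac{0}{|\cdot|}=0$. This is exactly your lift-and-lower idea, but lifting by $m+1$ in a single auxiliary direction $k$ creates enough room to break through the degree-$m$ boundary that blocks your gap-preserving manipulations.

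So the correct verdict is: your approach is on the right track and two of the three cases are sound, but the proof is incomplete as written, and the boundary band can in fact be closed with Lemma \ref{evaluation of adjoint T_j} alone via the insertion of $T_k^{*(m+1)}T_k^{m+1}$ --- no Poisson kernel or model-theoretic input is needed.
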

\begin{proof} If $\gamma \in \Z_+^n$, $|\gamma| \geq m$ and $i=1, \ldots, n$, then by Lemma \ref{evaluation of adjoint T_j}, we have
\begin{equation}\label{eq-lemma 3.2}
T^{\gamma} D_{T^*} = \frac{|\gamma| + 1}{\gamma_{i}+1} T_i^* T_i T^{\gamma} D_{T^*}.
\end{equation}
Now we fix $\al, \beta \in \Z_+^n$ such that $\al \neq \beta$ and $ |\al|, |\beta| \geq m$. Since $\al \neq \beta$, $\al_j \neq \beta_j$ for some $j \in \{1, \ldots,n\}$. Without loss of generality, we assume that $\al_j < \beta_j$. Fix an integer $k \in \{1, \ldots, n\}$ such that $k \neq j$. By \eqref{eq-lemma 3.2}, we have
\[
T^{\beta}D_{T^*} = c_k T_k^*T_k T^
{\beta}D_{T^*},
\]
where
\[
c_k = \frac{|\beta| + 1}{\beta_k +1}.
\]
By repeated applications of \eqref{eq-lemma 3.2}, we have
\[
T^{\beta} D_{T^*} = \big(c_k \cdots c_{k+m+1}\big) T_k^{* m+1}T_k^{m+1} T^{\beta}D_{T^*},
\]
for some positive scalars $c_k, \ldots, c_{k+m+1}$. Hence for $h_1$ and $h_2$ in $\clh$, we have
\[
\la T^{\al}D_{T^*}h_1, T^{\beta}D_{T^*}h_2 \ra = \big(c_k \cdots c_{k+m+1}\big) \la T_k^{m+1} T^{\al}D_{T^*}h_1, T_k^{m+1} T^{\beta}D_{T^*}h_2 \ra,
\]
where, on the other hand
\[
\begin{split}
\la T_k^{m+1} T^{\al}D_{T^*}h_1, T_k^{m+1} T^{\beta}D_{T^*}h_2 \ra & = \la T_j^{\al_j} T_k^{m+1} \Big(\mathop\Pi_{i \neq j} T_i^{\al_i} D_{T^*} \Big) h_1, T_j^{\beta_j} T_k^{m+1} \Big(\mathop\Pi_{i \neq j} T_i^{\beta_i} D_{T^*}\Big) h_2 \ra
\\
& = \la T_j^{*\beta_j} T_j^{\al_j} T_k^{m+1} \Big(\mathop\Pi_{i \neq j} T_i^{\al_i} D_{T^*} \Big) h_1, T_k^{m+1} \Big(\mathop\Pi_{i \neq j} T_i^{\beta_i} D_{T^*}\Big) h_2 \ra.
\end{split}
\]
But
\[
\begin{split}
T_j^{*\beta_j} T_j^{\al_j} T_k^{m+1} \Big(\mathop\Pi_{i \neq j} T_i^{\al_i} D_{T^*} \Big) & = T_j^{*(\beta_j - 1)} (T_j^* T_j) \Big(T_j^{\al_j - 1} T_k^{m+1} \mathop\Pi_{i \neq j} T_i^{\al_i} D_{T^*} \Big)
\\
& = c T_j^{*(\beta_j - 1)} \Big(T_j^{\al_j - 1} T_k^{m+1} \mathop\Pi_{i \neq j} T_i^{\al_i} D_{T^*} \Big),
\end{split}
\]
for some positive scalar $c$, which follows from Lemma \ref{evaluation of adjoint T_j}. By setting $\tilde{c} = c c_k \cdots c_{k+m+1}$, it follows that
\[
\la T^{\al}D_{T^*}h_1, T^{\beta}D_{T^*}h_2 \ra = \tilde{c} \la T_j^{*(\beta_j - 1)} \Big(T_j^{\al_j - 1} T_k^{m+1} \mathop\Pi_{i \neq j} T_i^{\al_i} D_{T^*} \Big) h_1, T_k^{m+1} \Big(\mathop\Pi_{i \neq j} T_i^{\beta_i} D_{T^*}\Big) h_2 \ra.
\]
Since $ \beta_j > \al_j $, applying again Lemma \ref{evaluation of adjoint T_j} (possibly finitely many times), we get a constant $\hat{c}$ such that
\[
T_j^{*(\beta_j - 1)} \Big(T_j^{\al_j - 1} T_k^{m+1} \mathop\Pi_{i \neq j} T_i^{\al_i} D_{T^*} \Big) = \hat{c} T_j^{*(\beta_j - \alpha_j)} \Big(T_k^{m+1} \mathop\Pi_{i \neq j} T_i^{\al_i} D_{T^*} \Big),
\]
and hence
\[
\la T^{\al}D_{T^*}h_1, T^{\beta}D_{T^*}h_2 \ra = \tilde{c} \hat{c} \la T_j^* \Big(T_k^{m+1} \mathop\Pi_{i \neq j} T_i^{\al_i} D_{T^*} \Big) h_1, T_j^{\beta_j - \alpha_j - 1} T_k^{m+1} \Big(\mathop\Pi_{i \neq j} T_i^{\beta_i} D_{T^*}\Big) h_2 \ra.
\]
But once again, by Lemma \ref{evaluation of adjoint T_j}, it follows that
\[
T_j^* \Big(T_k^{m+1} \mathop\Pi_{i \neq j} T_i^{\al_i} D_{T^*} \Big) =0.
\]
This implies that $\la T^{\al}D_{T^*}h_1, T^{\beta}D_{T^*}h_2 \ra = 0$ and completes the proof of the lemma.
\end{proof}

Now let $T$ be an $n$-tuple of commuting row contraction on $\clh$ such that the characteristic function $\theta_T$ is a polynomial of degree $m$. Set
\[
\clm = \overline{\text{span}} \{ T^{\al}D_{T^*}h : h \in \clh,
|\al| \geq m, \al \in \Z_+^n \},
\]
and
\[
\cln =  \overline{\text{span}}
\{ T^{\al}D_{T^*}h : h \in \clh, |\al|=m, \al \in \Z_+^n \}.
\]
Clearly, $\clm$ is a joint $T$-invariant subspace of $\clh$ and $\cln \subseteq \clm$. Define
\[
M_i:= T_i|_{\clm} \in \clb(\clm) \quad \quad (i=1, \ldots, n).
\]
Then $(M_1, \ldots, M_n)$ is a commuting row contraction on $\clm$. If $|\alpha| > m$, $\alpha \in \Z_+^n$, then Lemma \ref{evaluation of adjoint T_j} implies that
\[
M_i M_i^* (T^{\alpha} D_{T^*}) = M_i T_i^* T^{\alpha} D_{T^*} = \frac{\alpha_i}{|\alpha|} M_i T^{\alpha - e_i} D_{T^*} = \frac{\alpha_i}{|\alpha|} T^{\alpha} D_{T^*},
\]
for all $i=1, \ldots, n$, and hence
\[
\Big(\sum_{i=1}^n M_i M_i^*\Big)|_{\clm \ominus \cln} = I_{\clm \ominus \cln}.
\]
Moreover, if $\beta \in \Z_+^n$ and $|\beta| = m$, then, again, Lemma \ref{orthogonal} implies that
\[
T_i^* T^{\beta} \cld_{T^*} \perp T^{\gamma} D_{T^*},
\]
for all $i=1, \ldots, n$, and $\gamma  \in \Z_+^n$ and $|\gamma| \geq m$. This implies that $M_i^*|_{\cln} = 0$ for all $i = 1,\ldots,n$, and hence we find
\begin{equation}\label{eq-IM PN}
I_{\clm} - (M_1 M_1^* + \cdots + M_n M_n^*) = P_{\cln}.
\end{equation}
In particular, $\cln = \clm \ominus  \big(\displaystyle\sum_{i=1}^n M_i \m M\big)$. This also implies that the minimal closed joint $(M_1, \ldots, M_n)$-invariant subspace of $\clm$ containing $\cln$ is $\clm$ itself. Moreover, by virtue of Lemma \ref{evaluation of adjoint T_j}, it follows easily that $(M_1, \ldots, M_n)$ is a pure tuple. We summarize these observations as follows:

\begin{Theorem}\label{rm1}
Let $T = (T_1, \ldots, T_n)$ be a commuting row contraction on $\clh$. Assume that the characteristic function of $T$ is a polynomial of degree $m$. If
\[
\clm = \overline{\text{span}} \{ T^{\al}D_{T^*}h : h \in \clh,
|\al| \geq m, \al \in \Z_+^n \},
\]
and $M_i: = T_i|_{\clm}$ for all $i = 1,\ldots,n$, and
\[
\cln=  \overline{\text{span}}
\{ T^{\al}D_{T^*}h : h \in \clh, |\al|=m, \al \in \Z_+^n \},
\]
then $\clm$ is a joint closed invariant subspace for $T$ and the restriction tuple $M = (M_1, \ldots, M_n)$ is a commuting pure partial
isometry on $\clm$. Moreover
\[
\clm = \overline{\text{span}} \{ M^{\al} \cln: \al \in \Z_+^n \},
\]
and
\[
\cln = \clm \ominus  \big(\displaystyle\sum_{i=1}^n M_i \m M\big),
\]
and $\clm$ is the minimal closed joint $M$-invariant subspace of $\clm$ containing $\cln$.	
\end{Theorem}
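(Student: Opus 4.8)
The plan is to verify the five assertions one at a time, letting Lemmas \ref{evaluation of adjoint T_j} and \ref{orthogonal} do all the real work. To begin, note that $\clm$ is joint $T$-invariant essentially by construction: for a generator $T^{\al}D_{T^*}h$ with $|\al| \geq m$ one has $T_i(T^{\al}D_{T^*}h) = T^{\al + e_i}D_{T^*}h$ and $|\al + e_i| = |\al| + 1 \geq m$, so $M_i := T_i|_{\clm}$ is well defined and $M = (M_1,\ldots,M_n)$ is a commuting row contraction on $\clm$.

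First I would show that $M$ is a partial isometry, i.e. that $\sum_{i=1}^n M_iM_i^*$ is an orthogonal projection. On a generator $T^{\al}D_{T^*}h$ of degree $|\al| > m$, Lemma \ref{evaluation of adjoint T_j} gives $M_iM_i^*(T^{\al}D_{T^*}h) = \tfrac{\al_i}{|\al|}T^{\al}D_{T^*}h$, and summing over $i$ (using $\sum_i \al_i = |\al|$) shows that $\sum_i M_iM_i^*$ acts as the identity on $\clm \ominus \cln$. On the other hand, Lemma \ref{orthogonal} forces $T_i^*T^{\beta}\cld_{T^*} \perp T^{\gamma}D_{T^*}$ for all $|\beta| = m$ and $|\gamma| \geq m$, whence $M_i^*|_{\cln} = 0$. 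Combining the two computations yields exactly \eqref{eq-IM PN}, namely $I_{\clm} - \sum_i M_iM_i^* = P_{\cln}$; in particular $\sum_i M_iM_i^* = P_{\clm \ominus \cln}$ is a projection, so $M$ is a partial isometry, and reading off the kernel gives $\ker M^* = \cln$, that is $\cln = \clm \ominus \big(\sum_i M_i\clm\big)$.

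Next I would prove purity. The key observation is that $M_i^*$ strictly lowers the homogeneous degree by Lemma \ref{evaluation of adjoint T_j} while annihilating the degree-$m$ generators by the previous step, so for a generator $g = T^{\gamma}D_{T^*}h$ one has $M^{*\al}g = 0$ as soon as $|\al| > |\gamma| - m$. Hence on the dense subspace spanned by finitely many generators the tail sum $\sum_{|\al|=k}\|M^{*\al}(\cdot)\|^2$ vanishes identically for all large $k$. Combining this with the uniform estimate $\sum_{|\al|=k}\|M^{*\al}g\|^2 \leq \|g\|^2$ (valid for any row contraction, since $\sum_{|\al|=k}\gamma_{\al}M^{\al}M^{*\al} \leq I$ and $\gamma_{\al}\geq 1$) and a routine density argument shows $\lim_{k}\sum_{|\al|=k}\|M^{*\al}g\|^2 = 0$ for every $g \in \clm$, i.e. $M$ is pure.

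Finally I would identify $\clm$ as the invariant subspace generated by $\cln$. Given any generator $T^{\gamma}D_{T^*}h$ with $|\gamma| \geq m$, choose $\beta \leq \gamma$ (componentwise) with $|\beta| = m$; then $T^{\beta}D_{T^*}h \in \cln$ and, since $M_i = T_i|_{\clm}$ and $\clm$ is $T$-invariant, $M^{\gamma - \beta}(T^{\beta}D_{T^*}h) = T^{\gamma}D_{T^*}h$. This places every generator of $\clm$ inside $\overline{\text{span}}\{M^{\al}\cln : \al \in \Z_+^n\}$, and the reverse inclusion is immediate, so $\clm = \overline{\text{span}}\{M^{\al}\cln\}$; minimality is then the standard fact that this span is the smallest closed joint $M$-invariant subspace containing $\cln$. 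I expect the only genuinely delicate point to be the purity step, where one must pass from the dense set of generators (on which the tail sums are eventually identically zero) to arbitrary $g \in \clm$ via the uniform contraction estimate; every other claim is a direct bookkeeping consequence of the two lemmas.
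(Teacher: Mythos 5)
Your proposal is correct and follows essentially the same route as the paper: both deduce $T$-invariance of $\clm$ directly, use Lemma \ref{evaluation of adjoint T_j} to get $\sum_i M_iM_i^* = I$ on $\clm\ominus\cln$ and Lemma \ref{orthogonal} to get $M_i^*|_{\cln}=0$, and combine these into the identity $I_{\clm}-\sum_i M_iM_i^*=P_{\cln}$ from which the partial-isometry, wandering-subspace and generation statements follow. Your explicit degree-lowering argument for purity (with the $\ell^2$ density step) and the componentwise choice of $\beta\leq\gamma$ for minimality simply flesh out steps the paper dispatches with ``it follows easily.''
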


A priori, the above result suggests that the $n$-tuple $M$ on $\clm$, up to unitary equivalence, is just the multiplication tuple $(M_{z_1}, \ldots, M_{z_n})$ on $H^2_n(\cln)$, the $\cln$-valued Drury-Arveson shift. It is also instructive to note that for $n=1$ case \cite{SF12} and for $n$-tuples of noncommutative operators \cite{Po13}, the operator $M$ on $\clm$ is indeed the multiplication operator or the tuple of creation operators on vector-valued Hardy space or the Fock space, respectively. However, for $n$-tuples of commuting row contractions, $n>1$, this is not true in general. This problem is connected to Gleason's property (also known as Gleason’s problem) of functions on the unit ball.

For the convenience of the reader, we recall  \textit{Gleason’s problem} in the Drury-Arveson space. Let $\bm{w} \in \B^n$ and let $f \in H^2_n$. If $f(\bm{w})=0$, then the Gleason problem says that \cite{AK} there exist $f_1, \ldots, f_n \in H^2_n$ such that
\[
f(\bz) = \sum_{i=1}^n (z_i - w_i) f_i(\bz) \quad \quad (\bz \in \B^n).
\]
Then, in view of the fact that $(M_z - W) (H^2_n)^n$ is a closed subspace of $H^2_n$  and
\[
\bigcap_{i=1}^n \ker (M_{z_i} - w_iI_{H^2_n})^* = \mathbb{C} k(\cdot, \bw),
\]
it follows that
\[
H^2_n = (M_z -W)(H^2_n)^n \mathop{+}^. \mathbb{C},
\]
for all $\bw \in \B^n$, where $\displaystyle\mathop{+}^.$ denotes the algebraic direct sum of subspaces. With this as motivation, we define regular tuples of operators \cite[Section 2]{EL18}.

\begin{Definition}\label{regularity definition}
We say that a tuple of commuting bounded linear operators $T = (T_1, \ldots, T_n)$ on a Hilbert space $\clh$ is regular if there exists $\epsilon > 0$ such that for any $\|\bz\|_{\C^n} < \epsilon$ the subspace $(T-Z)\clh^n$ is closed in $\clh$ and
\[
\clh = (T-Z)\clh^n \mathop{+}^. \Big(\clh \ominus \sum_{i=1}^n T_i \clh \Big).
\]
\end{Definition}

\begin{Theorem}\label{Drury Arveson Shift}
In the setting of Theorem \ref{rm1}, if, in addition, the $n$-tuple $M$ on $\clm$ is regular, then $M$ and the Drury-Arveson shift $(M_{z_1}, \ldots, M_{z_n})$ on $H^2_n(\m N)$ are unitary equivalent.
\end{Theorem}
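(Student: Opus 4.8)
The plan is to realize $\clm$ concretely inside $H^2_n(\cln)$ through the canonical dilation of the pure tuple $M$, and then to use regularity to show that this dilation is in fact onto.

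First I would invoke purity. By Theorem \ref{rm1} the tuple $M$ is a pure commuting row contraction, and by \eqref{eq-IM PN} its defect operator satisfies $I_\clm - \sum_{i=1}^n M_i M_i^* = P_\cln$, so that $\Delta_M := (I_\clm - \sum_i M_i M_i^*)^{1/2} = P_\cln$ and the defect space of $M$ is precisely $\cln$. Purity then guarantees that the standard dilation (Poisson) map
\[
\Pi : \clm \raro H^2_n(\cln), \qquad \Pi x = \sum_{\al \in \Z_+^n} \gamma_\al \, \bz^\al \otimes P_\cln M^{*\al} x,
\]
is a well-defined isometry intertwining $M_i^*$ with $M_{z_i}^* \otimes I_\cln$; equivalently, taking adjoints, $\Pi^* M_{z_i} = M_i \Pi^*$, and a direct inner-product computation gives $\Pi^*(\bz^\beta \otimes \eta) = M^\beta \eta$ for every $\eta \in \cln$ and $\beta \in \Z_+^n$. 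Since $\Pi$ is isometric, it suffices to prove that $\Pi$ is \emph{surjective}: then $\Pi$ is unitary, and $M_i = \Pi^* M_{z_i} \Pi$ exhibits the required unitary equivalence of $M$ with $(M_{z_1}, \ldots, M_{z_n})$ on $H^2_n(\cln)$.

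Next I would reformulate surjectivity as the absence of nontrivial relations among the generators. As $\Pi$ is an isometry, $\Pi$ is onto iff $\ker \Pi^* = \{0\}$, and writing $g = \sum_\beta \bz^\beta \otimes g_\beta \in H^2_n(\cln)$ (with $g_\beta \in \cln$) one has $g \in \ker \Pi^*$ iff $\sum_\beta M^\beta g_\beta = 0$ in $\clm$. Thus the theorem reduces to showing that the only square-summable relation $\sum_\beta M^\beta g_\beta = 0$ among the vectors $\{M^\beta \eta : \eta \in \cln,\ \beta \in \Z_+^n\}$ is the trivial one. This is exactly where commutativity and Gleason's problem enter, and where the argument departs from the one-variable and noncommutative cases. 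The key step, and the place where regularity is used essentially, is the construction of a family of bounded character-type functionals. Fix $\bw$ with $\|\bw\|_{\C^n} < \epsilon$. By Definition \ref{regularity definition}, $\clm = (M - W)\clm^n \mathop{+}^{.} \cln$ with $(M-W)\clm^n$ closed; two closed subspaces in algebraic direct sum spanning $\clm$ admit bounded complementary projections, so there is a bounded idempotent $q_\bw : \clm \raro \cln$ with range $\cln$ and kernel $(M-W)\clm^n$. Since $(M_i - w_i)x \in (M-W)\clm^n = \ker q_\bw$ for all $x$, we get $q_\bw M_i = w_i q_\bw$, hence $q_\bw M^\beta = \bw^\beta q_\bw$, and as $q_\bw|_\cln = I_\cln$ we obtain $q_\bw(M^\beta g_\beta) = \bw^\beta g_\beta$ for $g_\beta \in \cln$. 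Applying the bounded map $q_\bw$ to a putative relation $\sum_\beta M^\beta g_\beta = 0$ yields $\sum_\beta \bw^\beta g_\beta = 0$ for every $\|\bw\| < \epsilon$; since the left-hand side is a convergent $\cln$-valued power series vanishing on a ball, all coefficients vanish, i.e.\ $g_\beta = 0$ for all $\beta$. Therefore $\ker \Pi^* = \{0\}$, $\Pi$ is unitary, and we are done.

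I expect the main obstacle to be precisely this surjectivity of $\Pi$. The abstract data collected so far (pure, commuting partial isometry, wandering subspace $\cln$) is exactly what forces the \emph{noncommutative} model to be the full Fock space automatically, but in the commuting case it only identifies $M$ with the compression of the Drury--Arveson shift to the coinvariant subspace $\overline{\operatorname{ran}}\,\Pi$, and a nonzero invariant complement $(\operatorname{ran}\Pi)^\perp = \ker \Pi^*$ can a priori survive; this is the phenomenon flagged in the paragraph following Theorem \ref{rm1}. Regularity is exactly what removes it, by manufacturing the evaluations $q_\bw$ that annihilate every relation. The two subtle points to watch are the boundedness of $q_\bw$ — which requires $(M-W)\clm^n$ to be closed, built into Definition \ref{regularity definition} — and the passage from ``$\sum_\beta \bw^\beta g_\beta = 0$ for each fixed $\bw$'' to ``$g_\beta = 0$ for all $\beta$,'' which genuinely uses that regularity holds on an entire neighborhood of the origin rather than at a single point.
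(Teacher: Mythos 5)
Your proposal is correct, and the unitary you build is exactly the one the paper uses: the paper's proof simply observes that $I_{\clm} - MM^* = P_{\cln}$ makes $M$ a pure commuting partial isometry with $(M^*M)|_{\text{ran}\, M^*} = I_{\text{ran}\,M^*}$, and then cites Eschmeier--Langend\"orfer \cite[Theorem 3.5]{EL18} to conclude that the map $(Uf)(z) = \sum_{\alpha} \gamma_{\alpha}\big(P_{\cln}M^{*\alpha}f\big)z^{\alpha}$ --- your $\Pi$ --- is unitary and intertwines $M_i$ with $M_{z_i}$. What you add is a self-contained proof of the only nontrivial point, namely surjectivity: the bounded Gleason-type idempotents $q_{\bw}$ with range $\cln$ and kernel $(M-W)\clm^n$ (bounded by the closed graph theorem, using that $(M-W)\clm^n$ is closed by Definition \ref{regularity definition}), the intertwining $q_{\bw}M^{\beta} = \bw^{\beta}q_{\bw}$, and the coefficient-extraction step; for the latter, note that $\sum_{\beta}\bw^{\beta}g_{\beta}$ converges absolutely for $\|\bw\|<1$ by Cauchy--Schwarz against $\sum_{\beta}\gamma_{\beta}|\bw^{\beta}|^2 = (1-\|\bw\|^2)^{-1}$, so vanishing on a ball indeed kills every coefficient. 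This is in substance the content of the cited result, so your route is not genuinely different, but it is complete where the paper's proof is a citation, and it isolates precisely where regularity (and nothing else) is used --- consistent with the paper's Section \ref{sect-7} example showing the conclusion fails without it.
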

\begin{proof}
By \eqref{eq-IM PN}, the tuple $M = (M_1, \ldots, M_n)$ on $\m M$ satisfies $I_{\clm} - M M^* = P_{\cln}$, and hence $M$ is a partial isometry and, in particular, $M^* M|_{\mbox{ran} M^*}: \text{ran} M^* \raro \text{ran} M^*$ is invertible. It follows that
\[
(M^* M)|_{\text{ran} M^*} = I_{\text{ran}M^*},
\]
and hence by \cite[Theorem 3.5]{EL18}, the map
\[
(Uf)(z) = \displaystyle\sum_{\alpha \in \Z_+^n} \gamma_{\alpha}\Big(P_{\m N} M^{*\alpha}f \Big) z^{\alpha},
\]
defines a unitary operator $U: \m M \to H^2_n(\m N)$ and satisfies $U M_i = M_{z_i} U$ for all $i=1, \ldots, n$.
\end{proof}

We continue with the setting of Theorem \ref{rm1}, and define
\[
\clk = \overline{\text{span}}\{T^{\al}D_{T^*}h : h \in \clh, \al \in \Z_+^n\},
\]
and
\[
\clh_{\text{nil}}= \clk \ominus \clm \quad \mbox{and} \quad N_i = P_{\clh_{\text{nil}}} T_i|_{\clh_{\text{nil}}},
\]
for all $i=1, \ldots, n$. Clearly, $\clh_{\text{nil}}$ is a semi-invariant subspace for $T$ and hence
\[
N^{\al} = P_{\clh_{\text{nil}}} T^{\al}|_{\clh_{\text{nil}}} \quad \quad (\al \in \Z_+^n).
\]
In particular, $N_i N_j = N_j N_i$ for all $i, j= 1, \ldots, n$, and
\[
\sum_{i=1}^n N_i N_i^* \leq P_{\clh_{\text{nil}}} \sum_{i=1}^n T_i T_i^*|_{\clh_{\text{nil}}} \leq I_{\clh_{\text{nil}}},
\]
that is, $N=(N_1,\ldots,N_n)$ is a commuting row contraction on $\clh_{\text{nil}}$. Clearly $\al \in \Z_+^n$ with $|\al|\geq m$ implies $T^{\al}\clk \subseteq \clm$ and hence $N^{\al}=0$. This shows that the commuting row contraction $N$ is a nilpotent tuple of order $\leq m$. Moreover, we have
\[
T_j|_{\clm \oplus \clh_{\text{nil}} } = \begin{bmatrix}
M_j & * \cr 0 & N_j
\end{bmatrix}: \clm \oplus \clh_{\text{nil}} \to \clm \oplus \clh_{\text{nil}}.
\]
Note now that $h \in \clh \ominus \clk$ if and only if $h \in \ker (D_{T^*}T^{* \al})$, or, equivalently, $h \in \ker (T^{\al} D^2_{T^*} T^{* \al})$ for all $\al \in \mathbb Z_+^n$. Note also that
\[
I - \sum_{|\al| = k} T^{\al} T^{*\al} = D_{T^*}^2 + (\sum_{|\beta| = 1} T^{\beta} D_{T^*}^2 T^{*\beta}) + \cdots + (\sum_{|\beta| = k-1} T^{\beta} D_{T^*}^2 T^{*\beta}),
\]
for all $k \geq 1$. This implies that $h \in \clh \ominus \clk$ if and only if $h$ is in the right side of \eqref{cnc}. Moreover, $\clh \ominus \clk$ is a $T^*$-invariant subspace of $\clh$. Consequently
\[
\clh_c: = \clh \ominus \clk =  \{h \in \clh: \sum_{|\al| = k } \|{T^*}^\al h \|^2 = \|h\|^2 \mbox{~for all ~} k \in \Z_+\},
\]
and $\displaystyle\sum_{i=1}^n W_i W_i^* = I_{\clh_c}$, where $W_i = P_{\clh_c}T_i|_{\clh_c}$ for all $i=1,\ldots,n$. Moreover
\[
W_i^*W_j^* = (T_i^*|_{\clh_c})(T_j^*|_{\clh_c}) = T_i^*T_j^*|_{\clh_c} = T_j^*T_i^*|_{\clh_c} = W_j^*W_i^*,
\]
for all $i, j=1, \ldots, n$. It follows that $W$ is a commuting spherical co-isometric tuple on $\clh_c$. Recall that an $n$-tuple $(X_1,\ldots, X_n)$ on $\cll$ is said to be a \textit{spherical co-isometry} if $\displaystyle\sum_{i=1}^n X_i X_i^* = I_{\cll}$.

Thus, we have proved:

\begin{Theorem}\label{structure-commutative}
Let $T=(T_1,\ldots,T_n)$ be a commuting row contraction on a Hilbert space $\clh$ with polynomial characteristic function of degree $m$. If $\clm = \overline{\text{span}} \{ T^{\al}D_{T^*}h : h \in \clh,
|\al| \geq m, \al \in \Z_+^n \}$, and
\[
\clh_{nil} = \overline{\text{span}}\{T^{\al}D_{T^*}h : h \in \clh, \al \in \Z_+^n\} \ominus \clm,
\]
and
\[
\clh_c =  \{h \in \clh: \sum_{|\al| = k } \|{T^*}^\al h \|^2 = \|h\|^2 \mbox{~for all ~} k \in \Z_+\},
\]
then $\clh = \clm \oplus \clh_{\text{nil}} \oplus\clh_c$ and $T_i$, $i=1,\ldots,n$ admits the following matrix decomposition
\begin{equation}\label{eq: T_i canonical}
T_i = \begin{bmatrix}
M_i &   *   &   *    \cr
    0       &  N_i  &   *    \cr
    0       &   0   &  W_i   \cr
\end{bmatrix},
\end{equation}
where $M$ on $\clm$ is a pure row contraction, $N$ on $ \clh_{\text{nil}}$ is a commuting nilpotent tuple of order less than or equal to $m$ and $W$ on $\clh_c$ is a commuting spherical co-isometry. Moreover,
\[
\displaystyle\sum_{i=1}^n M_i M_i^* = I_{\clm} - P_{\cln},
\]
where $\cln = \clm \ominus \Big(\displaystyle\sum_{i=1}^n {T}_i \clm\Big)$. If, in addition, $M$ is regular, then it is a Drury-Arveson shift.
\end{Theorem}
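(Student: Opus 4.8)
The plan is to build the orthogonal decomposition $\clh = \clm \oplus \clh_{\text{nil}} \oplus \clh_c$ out of the single ``full defect'' subspace $\clk = \overline{\text{span}}\{T^\al D_{T^*}h : h \in \clh,\ \al \in \Z_+^n\}$, and then to read off the block form of each $T_i$ and the nature of the diagonal blocks from the invariance of the summands together with the lemmas already in hand. Since $\clm \subseteq \clk$ by construction and $\clh_{\text{nil}} = \clk \ominus \clm$, the only thing needed for the first assertion is the identification $\clh \ominus \clk = \clh_c$.

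First I would prove that identification. A vector $h$ lies in $\clh \ominus \clk$ precisely when $\langle T^\al D_{T^*}h', h \rangle = 0$ for every $h' \in \clh$ and every $\al$, i.e.\ when $D_{T^*} T^{*\al} h = 0$ for all $\al \in \Z_+^n$. Feeding this into the telescoping defect identity recorded just before the statement shows that the simultaneous vanishing of all the defect compressions $D_{T^*}T^{*\al}h$ is equivalent to the saturation condition \eqref{cnc} defining $\clh_c$; this gives $\clh \ominus \clk = \clh_c$ and hence $\clh = \clm \oplus \clh_{\text{nil}} \oplus \clh_c$. The same computation exhibits $\clh_c = \clh \ominus \clk$ as a joint $T^*$-invariant subspace, so $\clk$ is joint $T$-invariant; combined with the joint $T$-invariance of $\clm$ from Theorem \ref{rm1}, the nested invariant chain $\clm \subseteq \clk \subseteq \clh$ forces each $T_i$ to be block upper triangular with respect to $\clm \oplus \clh_{\text{nil}} \oplus \clh_c$, with diagonal blocks $M_i = T_i|_{\clm}$, $N_i = P_{\clh_{\text{nil}}} T_i|_{\clh_{\text{nil}}}$ and $W_i = P_{\clh_c} T_i|_{\clh_c}$, as in \eqref{eq: T_i canonical}.

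It then remains to name the three diagonal tuples. For $M$, Theorem \ref{rm1} already furnishes that $M = (M_1, \ldots, M_n)$ is a commuting pure partial isometry on $\clm$ — in particular a pure row contraction — and \eqref{eq-IM PN} gives $\sum_i M_i M_i^* = I_{\clm} - P_{\cln}$ with $\cln = \clm \ominus (\sum_i T_i \clm)$. For $N$, the subspace $\clh_{\text{nil}} = \clk \ominus \clm$ is semi-invariant (a difference of nested invariants), so $N^\al = P_{\clh_{\text{nil}}} T^\al|_{\clh_{\text{nil}}}$; since $T^\al \big(T^\beta D_{T^*}\big) = T^{\al + \beta} D_{T^*}$ has total degree at least $m$ whenever $|\al| \geq m$, one gets $T^\al \clk \subseteq \clm$ for $|\al| \geq m$, whence $N^\al = 0$ for all $|\al| = m$ and $N$ is nilpotent of order $\leq m$. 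For $W$, the level $k=1$ instance of the defining condition of $\clh_c$ reads $\sum_i \|T_i^* h\|^2 = \|h\|^2$ for $h \in \clh_c$; as $\clh_c$ is joint $T^*$-invariant one has $W_i^* = T_i^*|_{\clh_c}$, so the $W_i$ commute and $\sum_i W_i W_i^* = I_{\clh_c}$, i.e.\ $W$ is a commuting spherical co-isometry. The final clause, that a regular $M$ is a Drury-Arveson shift, is exactly Theorem \ref{Drury Arveson Shift}.

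Most of the work is organizational once Lemmas \ref{evaluation of adjoint T_j} and \ref{orthogonal} and Theorem \ref{rm1} are granted; the one genuinely substantive step is the identification $\clh \ominus \clk = \clh_c$, which depends on the telescoping defect identity and on trading ``every defect compression $D_{T^*}T^{*\al}h$ vanishes'' for ``the power-sum condition holds at every level $k$.'' A second point deserving care is pinning the order of nilpotency of $N$ at exactly $\leq m$: this is where the degree $m$ of the characteristic function enters, through the defining span of $\clm$ over multi-indices $|\al| \geq m$, and one should check that the containment $T^\al \clk \subseteq \clm$ really does begin at $|\al| = m$.
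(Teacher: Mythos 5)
Your proposal is correct and follows essentially the same route as the paper: it builds $\clk = \overline{\text{span}}\{T^{\al}D_{T^*}h\}$, identifies $\clh \ominus \clk$ with $\clh_c$ via the telescoping defect identity, gets the block triangular form from the invariant chain $\clm \subseteq \clk \subseteq \clh$, obtains nilpotency of $N$ from $T^{\al}\clk \subseteq \clm$ for $|\al| \geq m$, and delegates the properties of $M$ and the regular case to Theorem \ref{rm1}, identity \eqref{eq-IM PN}, and Theorem \ref{Drury Arveson Shift}, exactly as in the discussion preceding the theorem in the paper. No gaps.
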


In the final section, we will study a non-trivial example of pure partial isometric commuting tuple whose characteristic function is not a polynomial. Therefore it is not unitarily equivalent to a Drury Arveson shift. Because of Corollary 3.10 of \cite{EL18}, a pure partial isometric commuting tuple is unitarily equivalent to a Drury Arveson shift if and only if it is a regular tuple. Thus,
the regularity assumption is an essential condition for the final conclusion in the above theorem.

For simplicity in what follows, we will refer to the representation \eqref{eq: T_i canonical} as simply the \textit{canonical representation} of $T$ with polynomial characteristic function of degree $m$. When the $n$-tuple $M$ of the canonical representation of $T$ is regular, we say that $T$ is \textit{regular}.

To avoid possible confusion, we remark in passing the following:
\begin{Remark}
If $m=0$, then the above construction yields that $\clh_{\text{nil}} = \{0\}$ and $ N_i = 0$ for all $i =1, \ldots, n$.
\end{Remark}

\section{Factorizations and noncommuting tuples}\label{sect-4}

We now turn to characteristic functions of noncommuting tuples introduced by Popescu \cite{Po89b}. Here, following \cite{FPS17}, we obtain an analytic structure of polynomial characteristic functions (up to unitary equivalence) of noncommuting row contractions.

The full \textit{Fock space} over $\mathbb C^n$, denoted by
$\Gamma$, is the Hilbert space
\[
\Gamma
:=\displaystyle\bigoplus_{m=0}^{\infty}(\mathbb{C}^n)^{\otimes^m}=
\mathbb{C}\oplus\mathbb{C}^n\oplus(\mathbb{C}^n)^{\otimes^2} \oplus
\cdots\oplus(\mathbb{C}^n)^{\otimes^m}\oplus \cdots.
\]
The \textit{vacuum vector} $1\oplus 0 \oplus \cdots \in \Gamma$ is
denoted by $e_\emptyset$. Let $\{e_1,\ldots, e_n\}$ be the standard
orthonormal basis of $\mathbb C^n$ and $\mathbb{F}_n^+$ be the
unital free semi-group with generators $1,\ldots, n$ and the
identity $\emptyset$. For $\alpha = \alpha_1\cdots\alpha_m \in
{\mathbb{F}_n^+}$ we denote the vector $e_{\alpha_1}\otimes \cdots
\otimes e_{\alpha_m}$ by $e_\alpha$. Then $\{e_\alpha: \alpha \in
{\mathbb{F}_n^+}\}$ forms an orthonormal basis of $\Gamma$. For each
$j = 1, \ldots, n$, the \textit{left creation operator} $L_j$ and the \textit{right creation operator} $R_j$ on $\Gamma$ are defined by
\[L_j f = e_j \otimes f, \quad \quad R_j f = f \otimes e_j \quad \quad (f \in
\Gamma),
\]
respectively. Moreover, $ R_j = U^* L_j U $ where $U$, defined by
\begin{equation}\label{eq-page 10}
U(e_{i_1}\otimes e_{i_2}\otimes\cdots \otimes e_{i_m}) =  e_{i_m}
\otimes \cdots \otimes e_{i_2}\otimes e_{i_1},
\end{equation}
is the \textit{flip operator} on $\Gamma$. The \textit{noncommutative disc algebra} ${\m A_n^\infty}$ is the norm closed algebra generated  by $\{I_{\Gamma}, L_1, \ldots,L_n \}$ and the \textit{noncommutative analytic Toeplitz
algebra} $\m F^\infty_n $ is the WOT-closure of ${\m A_n^\infty}$
(see Popescu \cite{Po95b}).

Let $\m E$ and $\m E_*$ be Hilbert spaces and $M \in \m B(\Gamma
\otimes \m E,  \Gamma \otimes \m E_*)$. Then $M$ is said to be
\textit {multi-analytic operator} if
\[
 M(L_i \otimes I_{\m E} ) = (L_i \otimes I_{\m E_*}) M  \quad \quad (i =1, \ldots,n).
\]
In this case, the bounded linear map $\theta \in \m B(\m E, \Gamma
\otimes \m E_*)$ defined by
\[\theta \eta = M(e_\emptyset \otimes \eta) \quad \quad (\eta \in
\m E),
\]
is said to be the \textit{symbol} of $M$ and we denote $M =
M_\theta$. Moreover, define $\theta_{\alpha} \in \m B(\m E, \m
E_*)$, $\alpha \in {\mathbb{F}_n^+} $ by
\[
\langle  \theta_{\alpha} \eta, \eta_* \rangle := \langle \theta
\eta, e_{\bar \alpha} \otimes \eta_* \rangle = \langle M
(e_\emptyset\otimes \eta),  e_{\bar \alpha} \otimes \eta_* \rangle,
\quad \quad (\eta \in \m E, \eta_* \in \m E_*)
\]
where $\bar \alpha$ is the reverse of $ \alpha$. The Fourier type representation for multi-analytic operators was considered first by Popescu  (see Popescu \cite{Po95a}), and from this representation, we have a unique formal Fourier expansion
\[
M ~\sim ~ \displaystyle\sum_{\alpha \in {\mathbb{F}_n^+}} R^\alpha
\otimes \theta_{\alpha},
\]
and
\[
M = \mbox{SOT}-\displaystyle\lim_{r \to 1^{-}}
\displaystyle\sum_{k=0}^{\infty}\displaystyle\sum_{|\alpha|=k}r^{|\alpha|}
R^\alpha \otimes \theta_{\alpha}
\]
where $ |\alpha|$ is the length of $ \alpha$. A multi-analytic operator $M_{\theta} \in \m B (\Gamma \otimes \m E, \Gamma \otimes \m E_*)$  is said to be \textit{purely contractive}
if $M_{\theta}$ is a contraction and
\[
\|P_{ e_\emptyset \otimes \m E_*} \theta \eta \| < \|\eta\| \quad
\quad (\eta \in \m E, \eta \neq 0).
\]
We say that $M_\theta$ \textit{coincides} with a multi-analytic operator
$M_{\theta'} \in \clb(\Gamma  \otimes \cle', \Gamma  \otimes \m
\cle_*')$ if there exist unitary operators $ W: \cle \to \cle'$ and
$ W_*:\cle_* \to \cle_*'$ such that
\[
(I_{\Gamma}\otimes W_*) M_{\theta} =  M_{\theta'}
(I_{\Gamma}\otimes W).
\]

Let $\m H$ be a Hilbert space and  $T = (T_1, \ldots, T_n)$ be a row
operator on $\m H$. \textsf{For simplicity of the notations, we will
denote by $\tilde T$ and $\tilde R$ the row operators $(I_\Gamma
\otimes T_1, \ldots, I_{\Gamma} \otimes T_n)$ and $(R_1 \otimes
I_{\m H}, \ldots, R_n \otimes I_{\m H})$ on $\Gamma \otimes \m H$,
respectively.}

The \textit{characteristic function} of a row contraction $T$ on $\m H$ is a purely contractive multi-analytic
operator $\Theta_{T} \in \m B(\Gamma \otimes \m D_{T}, \Gamma
\otimes \m D_{T^*})$ defined by
\[
\Theta_T \sim~ -I_\Gamma \otimes T + (I_\Gamma \otimes  D_{
T^*})(I_{\Gamma \otimes \m H}- \tilde R \tilde  T^*)^{-1} \tilde R
(I_\Gamma\otimes D_T).
\]
Hence
\[
\Theta_T  = \mbox{SOT}-\displaystyle\lim_{r \to 1}\Theta_T(r \tilde R),
\]
where for each $r \in [0, 1)$,
\[
\Theta_T(r \tilde R) := - \tilde T + D_{ {\tilde T}^*}(I_{\Gamma \otimes \m
H}-  r \tilde R\tilde T^*)^{-1} r \tilde R  D_{\tilde T}.
\]
Therefore
\begin{align}\label{3}
\Theta_T = \mbox{SOT}-\displaystyle\lim_{r \to 1}\Theta_T(r \tilde R) =
\mbox{SOT}-\displaystyle\lim_{r \to 1} \big[- \tilde T+ D_{ {\tilde T}^*}(I_{\Gamma \otimes \m H}-  r \tilde R\tilde T^*)^{-1} r \tilde
R  D_{\tilde T}\big].
\end{align}

Now we recall the classical result of Sz.-Nagy and Foias concerning $2 \times 2$ block contractions (see \cite{NF67}, and also \cite[Lemma 2.1, Chapter IV]{FF90}):

\begin{Theorem}\label{th:NFblock}
Let $\mathcal H_1$ and $\mathcal H_2$ be Hilbert spaces, $A =  (A_1, \ldots, A_n)\in \mathcal B (\mathcal H_1^n, \mathcal H_1)$, $B = (B_1, \ldots, B_n) \in \mathcal B(\mathcal H_2^n, \mathcal H_2)$ and $X = (X_1, \ldots, X_n) \in \mathcal B(\mathcal H_2^n,\mathcal H_1 )$ be row operators. Then the row operator
\[T =
\begin{bmatrix}A & X\\ 0 & B\end{bmatrix} \in \mathcal B(\mathcal H_1^n \oplus \mathcal H_2^n, \mathcal H_1 \oplus \mathcal H_2),
\]
is a row contraction if and only if $A$ and $B$ are row contractions and $X = D_{A^*} L D_{B}$ for some contraction $L \in  \mathcal B (\m D_{B}, \m D_{A^*})$.
\end{Theorem}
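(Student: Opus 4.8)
The plan is to give a self-contained proof of this (classical) fact, treating $T$ as an operator $\mathcal H_1^n \oplus \mathcal H_2^n \to \mathcal H_1 \oplus \mathcal H_2$ and working throughout from the identity $\|T(u,v)\|^2 = \|Au + Xv\|^2 + \|Bv\|^2$ for $u \in \mathcal H_1^n$ and $v \in \mathcal H_2^n$. The two workhorses will be Douglas' factorization lemma (for operators with common final space, $RR^* \le SS^*$ holds iff $R = SC$ for some contraction $C$ with $\overline{\operatorname{ran}}\,C \subseteq \overline{\operatorname{ran}}\,S^*$) and the elementary rotation (Julia) unitary of the contraction $A \in \mathcal B(\mathcal H_1^n, \mathcal H_1)$, namely $J(A) = \begin{bmatrix} A & D_{A^*} \\ D_A & -A^* \end{bmatrix}$, which is unitary from $\mathcal H_1^n \oplus \mathcal H_1$ onto $\mathcal H_1 \oplus \mathcal H_1^n$ thanks to the intertwining identities $A D_A = D_{A^*} A$ and $A^* D_{A^*} = D_A A^*$.

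For \emph{sufficiency} I would assume that $A,B$ are row contractions and $X = D_{A^*} L D_B$ with $\|L\| \le 1$. Setting $h = L D_B v$, so that $\|h\| \le \|D_B v\|$ and $Au + Xv = Au + D_{A^*} h$, I would apply $J(A)$ to the vector $(u,h)$; isometry of $J(A)$ gives $\|Au + D_{A^*} h\|^2 + \|D_A u - A^* h\|^2 = \|u\|^2 + \|h\|^2$, whence $\|Au + Xv\|^2 \le \|u\|^2 + \|h\|^2 \le \|u\|^2 + \|D_B v\|^2$. Adding $\|Bv\|^2$ and invoking $\|D_B v\|^2 + \|Bv\|^2 = \|v\|^2$ would yield $\|T(u,v)\|^2 \le \|u\|^2 + \|v\|^2$, so $T$ is a row contraction.

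For \emph{necessity}, assume $T$ is a row contraction. Restricting $T$ to $\mathcal H_1^n \oplus 0$ gives $\|Au\| \le \|u\|$, and restricting the (also contractive) adjoint $T^*$ to $0 \oplus \mathcal H_2$ gives $\|B^* q\| \le \|q\|$; hence $A$ and $B$ are row contractions. Next, compressing $TT^* \le I$ to the first coordinate yields $AA^* + XX^* \le I$, i.e. $XX^* \le D_{A^*}^2$, so Douglas' lemma furnishes $X = D_{A^*} C$ with $\|C\| \le 1$ and $\overline{\operatorname{ran}}\,C \subseteq \mathcal D_{A^*}$. Substituting $X = D_{A^*}C$ into $\|Au + Xv\|^2 + \|Bv\|^2 \le \|u\|^2 + \|v\|^2$ and applying the Julia identity with $h = Cv$, I would obtain, after cancelling $\|u\|^2$, the inequality $\|Cv\|^2 + \|Bv\|^2 - \|v\|^2 \le \|D_A u - A^* C v\|^2$ valid for all $u,v$; the remaining step is to take the infimum over $u$.

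The main obstacle is precisely this infimum: a priori $\inf_u \|D_A u - A^* Cv\|^2 = \operatorname{dist}(A^* Cv, \overline{\operatorname{ran}}\,D_A)^2$ need not vanish. It does vanish here because $Cv \in \overline{\operatorname{ran}}\,D_{A^*}$, and the identity $A^* D_{A^*} = D_A A^*$ propagates by continuity to $A^*\big(\overline{\operatorname{ran}}\,D_{A^*}\big) \subseteq \overline{\operatorname{ran}}\,D_A$, so $A^* Cv$ already lies in $\overline{\operatorname{ran}}\,D_A$ and the infimum is $0$. This gives $\|Cv\|^2 \le \|v\|^2 - \|Bv\|^2 = \|D_B v\|^2$ for every $v$; a second application of Douglas' lemma then produces a contraction $L \colon \mathcal D_B \to \mathcal D_{A^*}$ with $C = L D_B$, and therefore $X = D_{A^*} C = D_{A^*} L D_B$, which completes the proof.
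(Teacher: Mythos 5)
Your proof is correct, and all the delicate points (the range inclusion $\overline{\operatorname{ran}}\,C \subseteq \mathcal D_{A^*}$ from Douglas' lemma, the intertwining $A^*D_{A^*}=D_AA^*$ forcing the infimum over $u$ to vanish) are handled properly. The paper itself gives no proof of this statement — it is recalled as a classical result of Sz.-Nagy and Foias with references to \cite{NF67} and \cite[Chapter IV]{FF90} — and your argument (two applications of Douglas factorization combined with the Julia rotation identity) is essentially the standard proof found in those sources.
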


Next, we recall a result \cite[Theorem 2.2]{HMS17} concerning factorizations of characteristic functions of noncommutative tuples, which will be used in the proof of the main theorem of this section. Recall, given a contraction $L \in \clb(\clh, \clk)$, the \textit{Julia-Halmos matrix} corresponding to $L$ is defined by
\[
J_L =\begin{bmatrix} L^* & D_L
\\ D_{L^*} & - L
\end{bmatrix}.
\]

\begin{Theorem}\label{factor}
Let $\m H_1$ and $\m H_2$ be two Hilbert spaces. Suppose $A$ on $\m H_1$ and $B$ on $\m H_2$ are $n$-tuples of row contractions and $L \in \m B(\m D_B, \m D_{A^*})$ is a contraction, and let
\[
T =\begin{bmatrix}
A& D_{A^*}L D_B\\
0 & B
\end{bmatrix}:\m H_1^n \oplus \m H_2^n \to \m H_1 \oplus \m H_2.
\]
Then there exist unitaries $\tau \in \m B(\m D_T, \m D_A \oplus \m D_L)$ and $\tau_*	\in \m B(\m D_{T^*}, \m D_{B^*} \oplus \m D_{L^*})$ such that
\[
\Theta_T =(I_\Gamma \otimes \tau_*^{-1})
\begin{bmatrix}
\Theta_B & 0 \\
0 & I_{\Gamma \otimes  \m D_{ L^*}}
\end{bmatrix}
(I_\Gamma \otimes J_L)
\begin{bmatrix}
\Theta_A & 0\\ 0& I_{\Gamma \otimes \m D_{ L}}\end{bmatrix}
(I_\Gamma \otimes \tau),
\]
where $J_L \in \m B ( \m D_{A^*} \oplus \m D_L , \m D_B \oplus \m D_{L^*})$ is the Julia-Halmos matrix corresponding to $L$.
\end{Theorem}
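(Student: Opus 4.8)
\emph{Strategy.} The plan is to read $\Theta_T$ as the transfer function of the Julia colligation of $T$ and to use the block upper-triangular form of $T$ to split this transfer function into a cascade driven by $A$, the Julia--Halmos rotation $J_L$ of $L$, and $B$. I would start from \eqref{3},
\[
\Theta_T = \mbox{SOT}-\lim_{r \to 1}\Big[-\tilde T + D_{\tilde T^*}\big(I_{\Gamma \otimes \m H} - r\tilde R \tilde T^*\big)^{-1} r\tilde R\, D_{\tilde T}\Big],
\]
and carry out the computation blockwise with respect to $\m H = \m H_1 \oplus \m H_2$, so that $\Gamma \otimes \m H = (\Gamma \otimes \m H_1) \oplus (\Gamma \otimes \m H_2)$. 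Since each $T_i = \bigl[\begin{smallmatrix} A_i & X_i \\ 0 & B_i\end{smallmatrix}\bigr]$ is upper triangular with $X = D_{A^*}LD_B$, the operator $r\tilde R\tilde T^* = \sum_i rR_i \otimes T_i^*$ is lower triangular, and the resolvent $\big(I - r\tilde R\tilde T^*\big)^{-1}$ is given by the usual inverse of a $2\times 2$ lower-triangular block operator: the two diagonal blocks are $\big(I - r\tilde R\tilde A^*\big)^{-1}$ and $\big(I - r\tilde R\tilde B^*\big)^{-1}$, and there is a single off-diagonal term carrying $\sum_i rR_i \otimes X_i^*$ between them.

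\emph{Defect spaces.} Next I would pin down the two unitaries. By Theorem \ref{th:NFblock}, $T$ is a row contraction exactly because $X = D_{A^*}LD_B$, and a direct block computation gives
\[
D_T^2 = I - T^*T = \begin{bmatrix} D_A^2 & -A^*D_{A^*}LD_B \\ -D_BL^*D_{A^*}A & D_B\big(I - L^*D_{A^*}^2 L\big)D_B \end{bmatrix},
\]
together with the analogous expression $D_{T^*}^2 = I - TT^*$ whose $(2,2)$-block is $D_{B^*}^2$ and whose $(1,1)$-block is $D_{A^*}\big(I - LD_B^2 L^*\big)D_{A^*}$. Using the intertwining identities $A^*D_{A^*} = D_A A^*$, $BD_B = D_{B^*}B$, $LD_L = D_{L^*}L$ and the Halmos relations $D_{A^*}^2 = I - AA^*$, $D_L^2 = I - L^*L$, $D_{L^*}^2 = I - LL^*$, one factors these nonnegative block forms as in the Sz.-Nagy--Foias regular-factorization lemma; this produces an isometry of $\m D_T$ onto $\m D_A \oplus \m D_L$, that is a unitary $\tau$, and correspondingly an isometry $\tau_*$ of $\m D_{T^*}$ onto $\m D_{B^*} \oplus \m D_{L^*}$. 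The summands $\m D_L$ and $\m D_{L^*}$ appear precisely as the Julia defects of $L$, which is what forces the middle factor $I_\Gamma \otimes J_L$, the unitary linking $\m D_{A^*}\oplus \m D_L$ to $\m D_B \oplus \m D_{L^*}$.

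\emph{Assembling the factorization.} With $\tau, \tau_*$ in hand I would substitute the triangular resolvent into the displayed formula for $\Theta_T$ and reorganize. The two diagonal resolvent blocks reproduce, after conjugation by the relevant parts of $\tau$ and $\tau_*$, the transfer functions $\Theta_A$ on the $\m D_A$-summand and $\Theta_B$ on the $\m D_B$-summand; the off-diagonal term, once $X^* = D_B L^* D_{A^*}$ is inserted and the defect operators are commuted past $\tilde A^*$ and $\tilde B$ via the intertwining relations, assembles exactly into the connecting rotation $I_\Gamma \otimes J_L$ sandwiched between the two block-diagonal transfer functions, while the defect directions $\m D_L, \m D_{L^*}$ of $L$ pass through untouched and contribute the identity summands $I_{\Gamma \otimes \m D_L}$ and $I_{\Gamma \otimes \m D_{L^*}}$. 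Equivalently, and more conceptually, the Julia colligation of $T$ factors as the cascade (product) of the colligations of $A$ and $B$ with $J_L$ inserted as the connecting unitary; since cascading colligations multiplies their transfer functions, the claimed identity for $\Theta_T$ follows, and both sides are genuine multi-analytic operators since each factor intertwines $L_i \otimes I$.

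\emph{Main obstacle.} The crux is the bookkeeping in the off-diagonal term: one must route the single operator $\sum_i rR_i \otimes X_i^*$ through both resolvents and verify, after inserting $X = D_{A^*}LD_B$ and commuting the defect operators past $A^*$ and $B$, that the result is precisely $(I_\Gamma \otimes \tau_*^{-1})$ times the off-diagonal blocks of $I_\Gamma \otimes J_L$ times $(I_\Gamma \otimes \tau)$, with no leftover terms. The delicate point inside this is proving that $\tau$ and $\tau_*$ are genuinely \emph{onto} $\m D_A \oplus \m D_L$ and $\m D_{B^*}\oplus \m D_{L^*}$, not merely isometric into them; this is where the hypothesis $X = D_{A^*}LD_B$ (hence Theorem \ref{th:NFblock}) is used in full. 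A secondary, purely technical, point is the passage to the $\mbox{SOT}$-limit as $r \to 1^-$: since $\tau, \tau_*, J_L$ are independent of $r$ and bounded, the limit commutes with the unitary identifications and the factored expression converges exactly when $\Theta_A, \Theta_B$ do.
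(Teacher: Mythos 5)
The paper does not actually prove Theorem \ref{factor}: it is explicitly \emph{recalled} from \cite[Theorem 2.2]{HMS17}, so there is no in-paper argument to compare yours against. That said, your outline reproduces the architecture of the proof in that reference (itself the Fock-space analogue of the Sz.-Nagy--Foias computation underlying \cite{FPS17}): write $\Theta_T$ as the SOT-limit of the transfer functions $\Theta_T(r\tilde R)$, invert $I-r\tilde R\tilde T^*$ as a $2\times 2$ lower-triangular block operator, build $\tau$ and $\tau_*$ by polarizing the block forms of $D_T^2$ and $D_{T^*}^2$ (e.g.\ $\tau D_T(h_1,h_2)=(D_Ah_1-A^*LD_Bh_2,\,D_LD_Bh_2)$ and its adjoint-side counterpart), and let the Julia--Halmos rotation $J_L$ absorb the off-diagonal entry $D_{A^*}LD_B$. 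Your strategy is correct, but what you give is a sketch rather than a proof: the surjectivity of $\tau$ and $\tau_*$ (which follows from the density of $D_A\m H_1^n$ in $\m D_A$ and of $D_LD_B\m H_2^n$ in $\m D_L$, and is not in fact delicate here since the unitaries are built from $T$ itself rather than from an abstract factorization of $\Theta_T$) and the off-diagonal bookkeeping that you yourself flag as the main obstacle constitute essentially the entire content of the cited proof and would need to be written out.
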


We are now ready to prove the main factorization theorem of this section. The proof uses ideas similar to that used for Theorem 1.3 of \cite{FPS17}.

\begin{Theorem}\label{th:factorization}
Let $\clh$, $\m H_1$, $\m H_0$ and $\m H_{-1}$ be Hilbert spaces. Suppose $\m H = \m H_1 \oplus \m H_0 \oplus \m H_{-1}$ and assume that $T = (T_1, \ldots, T_n)$ is a row contraction on $\clh$ and
\[
T_i =\begin{bmatrix}
S_i & *&*\\
0 & N_i&*\\
0 & 0&C_i\\
\end{bmatrix},
\]
for all $i=1, \ldots, n$. Then $S$, $N$ and $C$ are $n$-tuples of row contractions on $\m H_1, \m H_0$ and $\m H_{-1}$, respectively, and there exist Hilbert spaces $\m E_1, \m E_2$ and $\m E$, and unitary operators
\[
\tau_1 \in \m B(\m D_{N^*} \oplus \m E, \m D_C \oplus\m E_1) \quad \mbox{and} \quad \tau_2 \in B(\m D_{S^*} \oplus \m E_2, \m D_N \oplus\m E),
\]
such that $\Theta_T$ coincides with
\[
\begin{bmatrix} \Theta_C & 0 \\
0 & I_{\Gamma \otimes  \m E_1}
\end{bmatrix}
(I_\Gamma \otimes \tau_1)
\begin{bmatrix}\Theta_N & 0\\
 0& I_{\Gamma \otimes \m E}
\end{bmatrix}
(I_\Gamma \otimes \tau_2)
\begin{bmatrix}
\Theta_S & 0\\
0& I_{\Gamma \otimes \m E_2}
\end{bmatrix}.
\]
\end{Theorem}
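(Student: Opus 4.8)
The plan is to reduce the $3\times 3$ upper-triangular structure to two successive $2\times 2$ block situations and invoke Theorem \ref{factor} twice. First I would regroup $\m H = \m H_1 \oplus (\m H_0 \oplus \m H_{-1})$ and write $T = \begin{bmatrix} S & X \\ 0 & B\end{bmatrix}$, where $B_i = \begin{bmatrix} N_i & * \\ 0 & C_i\end{bmatrix}$ acts on $\m H_0 \oplus \m H_{-1}$ and $X$ is the row operator assembled from the top-right $*$-entries. Since $T$ is a row contraction, Theorem \ref{th:NFblock} forces $S$ and $B$ to be row contractions and $X = D_{S^*} L D_B$ for a contraction $L \in \m B(\m D_B, \m D_{S^*})$. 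Applying Theorem \ref{th:NFblock} once more to $B$ shows that $N$ and $C$ are row contractions and that the inner off-diagonal equals $D_{N^*} L' D_C$ for a contraction $L' \in \m B(\m D_C, \m D_{N^*})$. This already settles the first assertion.

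Next I would apply Theorem \ref{factor} to $T = \begin{bmatrix} S & D_{S^*}L D_B \\ 0 & B \end{bmatrix}$, obtaining unitaries $\alpha \in \m B(\m D_T, \m D_S \oplus \m D_L)$ and $\alpha_* \in \m B(\m D_{T^*}, \m D_{B^*} \oplus \m D_{L^*})$ with
\[
\Theta_T = (I_\Gamma \otimes \alpha_*^{-1})\begin{bmatrix} \Theta_B & 0 \\ 0 & I_{\Gamma \otimes \m D_{L^*}}\end{bmatrix}(I_\Gamma \otimes J_L)\begin{bmatrix} \Theta_S & 0 \\ 0 & I_{\Gamma \otimes \m D_L}\end{bmatrix}(I_\Gamma \otimes \alpha),
\]
and then apply it again to $B = \begin{bmatrix} N & D_{N^*} L' D_C \\ 0 & C\end{bmatrix}$ to factor $\Theta_B$ through $\Theta_C, J_{L'}, \Theta_N$ with unitaries $\beta \in \m B(\m D_B, \m D_N \oplus \m D_{L'})$ and $\beta_* \in \m B(\m D_{B^*}, \m D_{C^*}\oplus \m D_{L'^*})$. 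The crucial algebraic step is to inflate this inner factorization by the direct summand $\m D_{L^*}$: writing $\begin{bmatrix} \Theta_B & 0 \\ 0 & I\end{bmatrix}$ as the product of the inflated factors obtained by adjoining $I_{\m D_{L^*}}$ to each factor of $\Theta_B$, and then substituting the result into the display for $\Theta_T$.

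After substitution I would collect the pieces. Setting $\m E_2 = \m D_L$, $\m E = \m D_{L'}\oplus \m D_{L^*}$ and $\m E_1 = \m D_{L'^*}\oplus \m D_{L^*}$, and
\[
\tau_1 := J_{L'} \oplus I_{\m D_{L^*}}, \qquad \tau_2 := (\beta \oplus I_{\m D_{L^*}})\,J_L,
\]
a check of domains gives $\tau_1 \in \m B(\m D_{N^*}\oplus \m E, \m D_C \oplus \m E_1)$ and $\tau_2 \in \m B(\m D_{S^*}\oplus \m E_2, \m D_N \oplus \m E)$; both are unitary because the Julia--Halmos operators $J_L, J_{L'}$ are unitary. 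The remaining unitaries $W := \alpha$ and $W_* := (\beta_* \oplus I_{\m D_{L^*}})\alpha_*$ land in $\m B(\m D_T, \m D_S \oplus \m E_2)$ and $\m B(\m D_{T^*}, \m D_{C^*}\oplus \m E_1)$, so moving them to the two ends yields $(I_\Gamma \otimes W_*)\Theta_T = (\text{stated product})(I_\Gamma \otimes W)$, which is precisely the assertion that $\Theta_T$ coincides with the claimed operator.

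The conceptual content is light --- there is no new idea beyond a twofold use of Theorem \ref{factor} --- so the only real work, and the step most prone to error, is the bookkeeping: ensuring the off-diagonal genuinely has the Julia form $D_{A^*}LD_B$ needed to enter Theorem \ref{factor} (guaranteed by Theorem \ref{th:NFblock}), correctly inflating the inner factorization by $I_{\Gamma \otimes \m D_{L^*}}$, and verifying that the regrouped operators $\tau_1, \tau_2, W, W_*$ carry the precise domains and codomains demanded by the statement.
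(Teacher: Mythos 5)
Your argument is correct and follows essentially the same route as the paper: reduce to two $2\times 2$ block situations via Theorem \ref{th:NFblock}, apply Theorem \ref{factor} twice, inflate the inner factorization by the leftover defect summand, and collect the unitaries into $\tau_1$, $\tau_2$ and the outer coincidence unitaries. The only (immaterial) difference is that you group $\m H_1\oplus(\m H_0\oplus\m H_{-1})$ and split off $S$ first, whereas the paper groups $(\m H_1\oplus\m H_0)\oplus\m H_{-1}$ and splits off $C$ first, which merely produces mirror-image choices of the auxiliary spaces $\m E$, $\m E_1$, $\m E_2$.
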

\begin{proof}
For each $i=1, \ldots, n$, set $T_i =   \begin{bmatrix}
A_i & Y_i\\
0 & C_i \end{bmatrix}$, where $A_i =  \begin{bmatrix}
S_i & X_i\\
0 & N_i \end{bmatrix}  = P_{\m H_1 \oplus \m H_0} T_i|_{\m H_1 \oplus \m H_0}$. Since $T$ is a row contraction, by Theorem \ref{th:NFblock}, $A$ and $C$ are row contractions and there exists a contraction $L_Y: \m D_C \to \m D_{A^*}$ such that $Y = D_{A^*} L_Y D_C$. On the other hand, since $A$ is a row contraction, by Theorem \ref{th:NFblock} again, it follows that $S$ and $N$ are row contractions and $X = D_{S^*} L_X D_N$ for some contraction $L_X: \m D_N  \to \m D_{S^*}$. Now, applying Theorem \ref{factor} to the row contraction $ T =  \begin{bmatrix}
A& D_{A^*} L_Y D_C\\
0 & C \end{bmatrix}$, we obtain
\[
\Theta_T =(I_\Gamma \otimes u_*^{-1})
\begin{bmatrix}
\Theta_C & 0 \\
0 & I_{\Gamma \otimes  \m D_{ L_Y^*}}
\end{bmatrix}
(I_\Gamma \otimes J_{L_Y})
\begin{bmatrix}
\Theta_A & 0\\ 0& I_{\Gamma \otimes \m D_{ L_Y}}\end{bmatrix}
(I_\Gamma \otimes u),
\]
for some unitary operators $u \in \m B (\m D_T, \m D_{A} \oplus \m D_{L_Y})$ and $u_* \in \m B (\m D_{T^*}, \m D_{C^*} \oplus \m D_{L_Y^*})$. Note that $J_{L_Y} \in \m B ( \m D_{A^*} \oplus \m D_{L_Y} , \m D_C \oplus \m D_{L_Y^*})$ is the Julia-Halmos matrix corresponding to $L_Y$. Again, applying Theorem \ref{factor} to the row contraction $A =  \begin{bmatrix}
S& D_{S^*} L_X D_N \\
0 & N \end{bmatrix}$, we obtain
\[
\Theta_A =(I_\Gamma \otimes \sigma_*^{-1})
\begin{bmatrix}
\Theta_N & 0 \\
0 & I_{\Gamma \otimes  \m D_{ L_X^*}}
\end{bmatrix}
(I_\Gamma \otimes J_{L_X})
\begin{bmatrix}
\Theta_S & 0\\ 0& I_{\Gamma \otimes \m D_{ L_X}}\end{bmatrix}
(I_\Gamma \otimes \sigma),
\]
for some unitary operators $\sigma \in \m B (\m D_A, \m D_{S} \oplus \m D_{L_X})$ and $\sigma_* \in \m B (\m D_{A^*}, \m D_{N^*} \oplus \m D_{L_X^*})$. Again note that $J_{L_X}\in \m B ( \m D_{S^*} \oplus \m D_{L_X} , \m D_N \oplus \m D_{L_X^*})$ is the Julia-Halmos matrix corresponding to $L_X$. For convenience, we denote
\[
 \Phi_S = \begin{bmatrix}
\Theta_S & 0\\ 0& I_{\Gamma \otimes \m D_{ L_X}}\end{bmatrix}, \Phi_N = 	\begin{bmatrix}
\Theta_N & 0 \\
0 & I_{\Gamma \otimes  \m D_{ L_X^*}}
\end{bmatrix},  ~\mbox{and}~ \Phi_C = \begin{bmatrix}
\Theta_C & 0 \\
0 & I_{\Gamma \otimes  \m D_{ L_Y^*}}
\end{bmatrix}.
\]
Therefore
\[
\begin{split}
\Theta_T
&=(I_\Gamma \otimes u_*^{-1})\Phi_C
(I_\Gamma \otimes J_{L_Y})
\begin{bmatrix}
(I_\Gamma \otimes \sigma_*^{-1})  \Phi_N (I_\Gamma \otimes J_{L_X}) \Phi_S  (I_\Gamma \otimes \sigma) & 0\\ 0& I_{\Gamma \otimes \m D_{ L_Y}}\end{bmatrix} (I_\Gamma \otimes u)
\\
& = (I_\Gamma \otimes u_*^{-1})\Phi_C
(I_\Gamma \otimes J_{L_Y})
\begin{bmatrix}
(I_\Gamma \otimes \sigma_*^{-1})  & 0\\ 0& I_{\Gamma \otimes \m D_{ L_Y}}
\end{bmatrix}  \begin{bmatrix}
 \Phi_N   & 0\\ 0& I_{\Gamma \otimes \m D_{ L_Y}}
\end{bmatrix}
\\
& \hspace*{.5cm} \times
\begin{bmatrix}
(I_\Gamma \otimes J_{L_X})  & 0\\ 0& I_{\Gamma \otimes \m D_{ L_Y}}
\end{bmatrix}
\begin{bmatrix}
\Phi_S   & 0\\ 0& I_{\Gamma \otimes \m D_{ L_Y}}
\end{bmatrix}
\begin{bmatrix}
I_\Gamma \otimes \sigma & 0\\ 0& I_{\Gamma \otimes \m D_{ L_Y}}\end{bmatrix}
(I_\Gamma \otimes u)
\\
& =  (I_\Gamma \otimes u_*^{-1})\Phi_C
(I_\Gamma \otimes \tau_1) \begin{bmatrix}
\Phi_N   & 0\\ 0& I_{\Gamma \otimes \m D_{ L_Y}}
\end{bmatrix}
(I_\Gamma \otimes \tau_2)
\begin{bmatrix}
\Phi_S   & 0\\ 0& I_{\Gamma \otimes \m D_{ L_Y}}
\end{bmatrix}
(I_\Gamma \otimes v).
\end{split}
\]
Here $\tau_1 \in \clb((\m D_{N^*} \oplus \m D_{L^*_X}) \oplus \m D_{L_Y}, \m D_C \oplus\m D_{L^*_Y})$, $\tau_2\in \clb((\m D_{S^*} \oplus \m D_{L_X}) \oplus \m D_{L_Y}, (\m D_N \oplus \m D_{L^*_X}) \oplus \m D_{L_Y})$ and $ \psi \in \clb(\m D_T, (\m D_S \oplus \m D_{L_X})\oplus \m D_Y)$ are unitary operators defined by
\[
I_{\Gamma}  \otimes \tau_1 = (I_\Gamma \otimes J_{L_Y})
\begin{bmatrix}
(I_\Gamma \otimes \sigma_*^{-1})  & 0\\ 0& I_{\Gamma \otimes \m D_{ L_Y}}
\end{bmatrix} \quad \mbox{and} \quad
I_{\Gamma}  \otimes \tau_2 = \begin{bmatrix}
(I_\Gamma \otimes J_{L_X})  & 0\\ 0& I_{\Gamma \otimes \m D_{ L_Y}}
\end{bmatrix},
\]
and
\[
I_\Gamma \otimes v = \begin{bmatrix} I_\Gamma \otimes \sigma & 0\\ 0& I_{\Gamma \otimes \m D_{ L_Y}}\end{bmatrix}
(I_\Gamma \otimes u).
\]
Hence
\[
\Theta_T
=(I_\Gamma \otimes u_*^{-1})
\begin{bmatrix}
\Theta_C & 0 \\
0 & I_{\Gamma \otimes  \m E_1}
\end{bmatrix}
(I_\Gamma \otimes \tau_1)\begin{bmatrix}
\Theta_N   & 0\\ 0& I_{\Gamma \otimes \m E}
\end{bmatrix}
(I_\Gamma \otimes \tau_2)
\begin{bmatrix}
\Theta_S   & 0\\ 0& I_{\Gamma \otimes \m E_2}
\end{bmatrix}(I_\Gamma \otimes v),
\]
where $\m E_1 = \m D_{L_Y^*}$, $\m E_2 = \m D_{L_X}\oplus \m D_{L_Y}$ and $\m E = \m D_{L_X^*} \oplus \m D_{L_Y}$. This completes the proof of the theorem.
\end{proof}

The following corollary is a noncommutative generalization of \cite[Theorem 2.2]{FPS17}.

\begin{Corollary}\label{cor1}
Assume the setting of Theorem \ref{th:factorization}. If $S$ is an isometry and $C$ is a spherical co-isometry, then there exist a Hilbert space $\m E$, a co-isometry $G_1 \in \clb (\Gamma \otimes (\m D_{N^*}\oplus \m E), \Gamma \otimes \m D_{T^*})$ and an isometry $G_2 \in \clb(\Gamma \otimes \m D_T, \Gamma \otimes (\m D_{N} \oplus \m E))$ such that
\[
\Theta_T = G_1 	\begin{bmatrix}
\Theta_N& 0 \\
0 & I_{\Gamma \otimes  \m D_{\m E}}
\end{bmatrix}G_2.
\]
\end{Corollary}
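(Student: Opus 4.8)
The plan is to specialize the three-term factorization of Theorem \ref{th:factorization} and to observe that, under the additional hypotheses, its two outer factors collapse to a co-isometry and an isometry, respectively. Writing the coincidence in Theorem \ref{th:factorization} out explicitly, there are unitaries $W$ on $\m D_T$ and $W_*$ on $\m D_{T^*}$ for which
\[
\Theta_T = (I_\Gamma \otimes W_*^{-1})
\begin{bmatrix} \Theta_C & 0 \\ 0 & I_{\Gamma \otimes \m E_1}\end{bmatrix}
(I_\Gamma \otimes \tau_1)
\begin{bmatrix} \Theta_N & 0 \\ 0 & I_{\Gamma \otimes \m E}\end{bmatrix}
(I_\Gamma \otimes \tau_2)
\begin{bmatrix} \Theta_S & 0 \\ 0 & I_{\Gamma \otimes \m E_2}\end{bmatrix}
(I_\Gamma \otimes W),
\]
with $\m E = \m D_{L_X^*} \oplus \m D_{L_Y}$ as in the proof of Theorem \ref{th:factorization}; this is exactly the Hilbert space $\m E$ claimed in the statement.

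Next I would record two defect-space degeneracies. Since $S$ is an isometry as a row operator, $S^* S = I$ forces $D_S = 0$, so $\m D_S = \{0\}$; consequently the rightmost factor has no $\Theta_S$-entry and reduces to the isometric inclusion $\begin{bmatrix} 0 \\ I_{\Gamma \otimes \m E_2}\end{bmatrix} : \Gamma \otimes \m E_2 \raro \Gamma \otimes (\m D_{S^*} \oplus \m E_2)$. Dually, since $C$ is a spherical co-isometry, $\sum_i C_i C_i^* = I$ forces $D_{C^*} = 0$, so $\m D_{C^*} = \{0\}$; then $\Theta_C : \Gamma \otimes \m D_C \raro \Gamma \otimes \m D_{C^*}$ is the zero map and the leftmost factor reduces to the co-isometric projection $\begin{bmatrix} 0 & I_{\Gamma \otimes \m E_1}\end{bmatrix} : \Gamma \otimes (\m D_C \oplus \m E_1) \raro \Gamma \otimes \m E_1$. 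In particular $W$ now maps $\m D_T$ unitarily onto $\m E_2$ and $W_*$ maps $\m D_{T^*}$ unitarily onto $\m E_1$.

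With these collapses in hand I would group the factors lying on either side of the central $\Theta_N$-block. Setting
\[
G_1 = (I_\Gamma \otimes W_*^{-1}) \begin{bmatrix} 0 & I_{\Gamma \otimes \m E_1}\end{bmatrix} (I_\Gamma \otimes \tau_1)
\quad \mbox{and} \quad
G_2 = (I_\Gamma \otimes \tau_2) \begin{bmatrix} 0 \\ I_{\Gamma \otimes \m E_2}\end{bmatrix} (I_\Gamma \otimes W),
\]
one reads off, after matching $\tau_1 : \m D_{N^*} \oplus \m E \to \m D_C \oplus \m E_1$ and $\tau_2 : \m D_{S^*} \oplus \m E_2 \to \m D_N \oplus \m E$, that $G_1 \in \clb(\Gamma \otimes (\m D_{N^*} \oplus \m E), \Gamma \otimes \m D_{T^*})$ and $G_2 \in \clb(\Gamma \otimes \m D_T, \Gamma \otimes (\m D_N \oplus \m E))$, and that
\[
\Theta_T = G_1 \begin{bmatrix} \Theta_N & 0 \\ 0 & I_{\Gamma \otimes \m E}\end{bmatrix} G_2.
\]
Since $G_1$ is a composition of a unitary, a co-isometry and a unitary, a direct computation gives $G_1 G_1^* = I$, so $G_1$ is a co-isometry; likewise $G_2^* G_2 = I$, so $G_2$ is an isometry, as required.

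The argument is essentially bookkeeping, so I do not anticipate a serious obstacle; the only points requiring care are (i) checking that the two degenerate blocks are genuinely a co-isometry and an isometry rather than merely partial isometries, which is precisely what $\m D_S = \{0\}$ and $\m D_{C^*} = \{0\}$ guarantee, and (ii) verifying that the domains and codomains of $\tau_1$, $\tau_2$, $W$ and $W_*$ line up with the claimed domains of $G_1$ and $G_2$. Both are handled by tracking the identifications $\m E = \m D_{L_X^*} \oplus \m D_{L_Y}$, $\m E_1 = \m D_{L_Y^*}$ and $\m E_2 = \m D_{L_X} \oplus \m D_{L_Y}$ supplied by Theorem \ref{th:factorization}.
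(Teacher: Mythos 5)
Your proposal is correct and follows essentially the same route as the paper: specialize the three-factor coincidence of Theorem \ref{th:factorization}, note that $\m D_S = \{0\}$ and $\m D_{C^*} = \{0\}$ collapse the outer blocks to an isometric inclusion and a co-isometric projection (the paper phrases this as $0_S^* 0_S = I$ and $0_C 0_C^* = I$ on the trivial summands), and absorb them together with the unitaries into $G_2$ and $G_1$. The bookkeeping of $\m E = \m D_{L_X^*} \oplus \m D_{L_Y}$, $\m E_1 = \m D_{L_Y^*}$, $\m E_2 = \m D_{L_X} \oplus \m D_{L_Y}$ matches the paper exactly.
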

\begin{proof}
Since $\m D_S = \{0_{\m H_1^n}\}$ and $\m D_{C^*} = \{ 0_{\m H_{-1}}\}$, by assumption, it follows that the characteristic functions $\Theta_S : \Gamma \otimes \m D_S  \to \Gamma \otimes \m D_{S^*}$ and $\Theta_C: \Gamma \otimes \m D_C \to \Gamma \otimes \m D_{C^*}$ are identically zero, that is,
\[
0_S := \Theta_S \equiv 0 : \Gamma \otimes \{0_{\m H_1^n}\} \to \Gamma \otimes \m D_{S^*} \quad \mbox{ and} \quad 0_C := \Theta_C\equiv 0: \Gamma \otimes \m D_C \to \Gamma \otimes  \{0_{\m H_{-1}}\}.
\]
In this case, the unitary operators $u_*$, $\sigma$ and $v$ in the proof of Theorem \ref{th:factorization} become
\[
u_* \in \m B (\m D_{T^*},   \{ 0_{\m H_{-1}}\} \oplus \m D_{L_Y^*}) \quad \mbox{and} \quad \sigma \in \m B (\m D_A,  \{0_{\m H_1^n}\}   \oplus \m D_{L_X}),
\]
and $v \in \m B(\m D_T, ( \{0_{\m H_1^n}\}  \oplus \m D_{L_X})\oplus  \m D_Y)$, respectively. Then the representation of $\Theta_T$, as given in the final part of the proof of Theorem  \ref{th:factorization}, becomes
\[
\Theta_T
=G_1
\begin{bmatrix}
\Theta_N   & 0\\ 0& I_{\Gamma \otimes (\m D_{L^*_X}\oplus \m D_{ L_Y})}
\end{bmatrix}
G_2,
\]
where $\m E = \m D_{L^*_X}\oplus \m D_{ L_Y}$ and
\[
G_1 = (I_\Gamma \otimes u_*^{-1})
\begin{bmatrix}
0_C & 0 \\
0 & I_{\Gamma \otimes  \m D_{ L_Y^*}}
\end{bmatrix}
(I_\Gamma \otimes \tau_1)  \in \m B(\Gamma\otimes (\m D_{N^*}\oplus \m E), \Gamma \otimes \m D_{T^*}),
\]
and
\[
G_2 = 	(I_\Gamma \otimes \tau_2)
\begin{bmatrix}
0_S   & 0\\ 0& I_{\Gamma \otimes(\m D_{L_X}\oplus \m D_{ L_Y})}
\end{bmatrix}
(I_\Gamma \otimes v) \in   \m B( \Gamma\otimes \m D_T, \Gamma \otimes (\m D_{N}\oplus \m E)).
\]
Since $0_C 0_C^* = I_{\Gamma\otimes \{0_{\m H_{-1}}\}} $ and $0_S^*0_S = I_{\Gamma\otimes  \{ 0_{\m H_1^n}\}}$, it follows that $ G_1G_1^* = I_{\Gamma \otimes \m D_{T^*}}$ and $G_2^*G_2 = I_{\Gamma \otimes \m D_T}$. This completes the proof.
\end{proof}

In view of Popescu \cite[Theorem 1.1]{Po13}, the following corollary is now more definite:

\begin{Corollary}
Let $T$ be a be a row contraction on $\clh$ such that the characteristic function $\Theta_T$ is a noncommutative polynomial of degree $m$. Then there exist a Hilbert space $\m  E $, a nilpotent row contraction $N = (N_1, \ldots, N_n)$ of order $\leq m$, such that
\[
\Theta_T = G_1 	\begin{bmatrix}
\Theta_N& 0 \\
0 & I_{\Gamma \otimes  \m D_{\m E}}
\end{bmatrix}G_2,
\]
where $G_1$ and $G_2$ are co-isometry and isometry in $\clb (\Gamma \otimes (\m D_{N^*}\oplus \m E), \Gamma \otimes \m D_{T^*})$ and $\m B( \Gamma\otimes \m D_T, \Gamma \otimes (\m D_N \oplus \m E))$, respectively.
\end{Corollary}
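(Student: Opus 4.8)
The plan is to obtain the statement as an immediate consequence of Corollary~\ref{cor1}, once the hypotheses of that corollary are supplied by Popescu's structural classification of noncommuting row contractions with polynomial characteristic functions. First I would invoke \cite[Theorem 1.1]{Po13}: since the noncommutative characteristic function $\Theta_T$ of the row contraction $T$ on $\clh$ is a polynomial of degree $m$, the space decomposes orthogonally as $\clh = \m H_1 \oplus \m H_0 \oplus \m H_{-1}$, and with respect to this decomposition each $T_i$ is upper triangular,
\[
T_i = \begin{bmatrix} S_i & * & * \\ 0 & N_i & * \\ 0 & 0 & C_i \end{bmatrix} \qquad (i = 1, \ldots, n),
\]
where $S = (S_1, \ldots, S_n)$ is a row isometry on $\m H_1$, the tuple $N = (N_1, \ldots, N_n)$ is a nilpotent row contraction of order $\leq m$ on $\m H_0$, and $C = (C_1, \ldots, C_n)$ is a spherical co-isometry on $\m H_{-1}$.

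With this $3 \times 3$ upper triangular form, $T$ falls exactly within the hypotheses of Theorem~\ref{th:factorization}, and moreover the two outermost corners meet the additional assumptions of Corollary~\ref{cor1}, namely that $S$ is an isometry and $C$ is a spherical co-isometry. I would therefore apply Corollary~\ref{cor1} directly: it produces a Hilbert space $\m E$, a co-isometry $G_1 \in \clb(\Gamma \otimes (\m D_{N^*} \oplus \m E), \Gamma \otimes \m D_{T^*})$, and an isometry $G_2 \in \m B(\Gamma \otimes \m D_T, \Gamma \otimes (\m D_N \oplus \m E))$ realizing
\[
\Theta_T = G_1 \begin{bmatrix} \Theta_N & 0 \\ 0 & I_{\Gamma \otimes \m D_{\m E}} \end{bmatrix} G_2,
\]
with $N$ the nilpotent tuple of order $\leq m$ supplied above. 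This is precisely the asserted factorization, so the proof would conclude here.

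The only genuine point to check is the compatibility of the two inputs: I must confirm that the isometry and co-isometry corners produced by \cite[Theorem 1.1]{Po13} are exactly a row isometry ($\sum_i S_i^* S_i = I$, so $\m D_S = \{0\}$) and a spherical co-isometry ($\sum_i C_i C_i^* = I$, so $\m D_{C^*} = \{0\}$) in the sense used by Corollary~\ref{cor1}. This is what forces the degenerate characteristic functions $\Theta_S \equiv 0$ and $\Theta_C \equiv 0$ that drive the collapse of the product of Theorem~\ref{th:factorization} into the three-factor form $G_1\,(\cdots)\,G_2$. I do not expect any serious obstacle here; the argument is essentially a citation combined with Corollary~\ref{cor1}, the whole content of the statement being the recognition that Popescu's classification delivers precisely the corner types that Corollary~\ref{cor1} demands.
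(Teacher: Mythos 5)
Your proposal is correct and follows essentially the same route as the paper's own proof: invoke Popescu's structural theorem \cite[Theorem 1.1]{Po13} to obtain the upper triangular decomposition with a row isometry, a nilpotent tuple of order $\leq m$, and a spherical co-isometry, and then apply Corollary \ref{cor1}. The compatibility check you flag (that the outer corners are exactly a row isometry and a spherical co-isometry, so their characteristic functions vanish) is precisely the point the paper relies on as well.
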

\begin{proof}
Since $T$ is a row contraction with a noncommutative polynomial of degree $m$, by \cite[Theorem 1.1]{Po13}, there exist closed subspaces $\m H_{-1}$, $\m H_0$ and $\m H_1$ of $\m H$ such $\m H = \m H_{-1} \oplus \m H_0 \oplus \m H_1$ and the matrix representation of $T_i$ is given by
\[
T_i = \begin{bmatrix}
	S_i & *&*\\
	0 & N_i&*\\
	0 & 0&C_i\\
\end{bmatrix},
\]
for all $i=1, \ldots, n$, where $S$ is a row isometry on $\m H_1$, $N$ is a nilpotent row contraction of order $\leq m$ on $\m H_ 0$ and $C$ is a spherical co-isometry on $\m H_{-1}$. The remaining part of the proof now directly follows from Corollary \ref{cor1}.
\end{proof}

\section{Constrained Row Contractions and Factorizations}\label{sect-5}

In this section, we describe the factorization results obtained in the previous section in the setting of constrained row contractions. Constrained row contractions are related to the notion of noncommutative varieties, which was introduced by G. Popescu in \cite{Po06a}. The added complications and structures of noncommutative varieties are due mostly to the fact, as for example, that the Drury-Arveson space is a quotient space of the full Fock space (see \cite{Po06a, P10}).

First, we recall the basic features of constrained row contractions and noncommutative varieties and refer the reader to Popescu \cite{Po06a, P10} for further details.

Let $ \m P_J \subset \m F_n^\infty$ be a set of noncommutative polynomials, and let $J$ be the WOT-closed two sided ideal of $ \m F_n^\infty$ generated
by $\m P_J$. In what follows, we always assume that $J \neq \m
F_n^\infty$. Then
\[
\clm_J := \overline{\mbox{span}} \{\phi \otimes \psi: \phi \in J, \psi \in
\Gamma\} \quad \mbox{and} \quad \cln_J : = \Gamma\ominus \clm_J
\]
are proper joint $(L_1, \ldots, L_n)$ and $(L_1^*, \ldots, L_n^*)$
invariant subspaces of $\Gamma$, respectively. Define \textit{constrained
left creation operators} and \textit{constrained right creation operators} on
$\m N_J$ by
\[
V_j := P_{\m N_J }L_j|_{P_{\m N_J }}\quad \quad {\rm and} \quad
\quad W_j := P_{\m N_J }R_j|_{P_{\m N_J }} \quad \quad (j =1,
\ldots, n),
\]
respectively. Let $\m E$ and $\m E_*$ be Hilbert spaces, and let $M\in \z(\m N_J
\otimes \m E, \m N_J \otimes \m E_* )$. Then $M$ is said to be
\textit{constrained multi-analytic operator} if
\[
M(V_j \otimes I_{\m E})= (V_j \otimes I_{\m E_*})M \quad \quad \quad
(j =1,\ldots, n).
\]
A constrained multi-analytic operator $M\in \z(\m N_J \otimes \m E, \m N_J \otimes \m E_* )$ is said to be \textit{purely contractive} if $M$ is a contraction, $e_\emptyset \in \m N_J$ and
\[
\|P_{e_\emptyset \otimes \m E_*} M(e_\emptyset \otimes \eta)\| <
\|\eta \| \quad \quad (\eta \neq 0,  \eta \in \m E).
\]
Let $\m W(W_1,\ldots, W_n)$ denote the WOT-closed algebra generated by
$\{I, W_1,\ldots, W_n\}$ and $\m R_n^\infty = U^* \m F_n^\infty U$, where $U$ is the flipping operator (see \eqref{eq-page 10}). The following equality, due to Popescu \cite{Po06a}, is often useful:
\[
\m W(W_1,\ldots, W_n)~ \bar\otimes ~\z( \m E,\m E_* ) = P_{\m N_J
\otimes \m E_*}[ \m R_n^\infty~\bar \otimes~\z( \m E,\m E_* )
]|_{P_{\m N_J\otimes \m E}}.
\]
Recall also that a row contraction $T= (T_1, \ldots, T_n)$ on $\m H$ is said to be \textit{$J$-constrained row contraction}, or simply
\textit{constrained row contraction} if $J$ is clear from the
context, if
\[
p(T_1, \ldots, T_n) = 0 \quad \quad \quad (p \in \m P _J).
\]
The \textit{constrained characteristic function} $\Theta_{J, T}$ (see Popescu
\cite{Po06a}) of a constrained row contraction $T= (T_1, \ldots,
T_n)$ on a Hilbert space $\m H$ is defined by
\[
\Theta_{J, T}= P_{\m N_J \otimes \m D_{T^*}}\Theta_T|_{\m N_J
\otimes \m D_{T}}.
\]
Note that $\Theta_{J, T}$ is a pure constrained multi-analytic
operator $\Theta_{J, T}:\m N_J\otimes\m D_{T} \to \m N_J\otimes\m
D_{T^*}$. Moreover, since $ \m N_J \otimes \m D_{T^*}$ is a joint $(R_1^* \otimes I_{\m D_{T^*}}, \ldots,
R_n^* \otimes I_{\m D_{T^*}})$ invariant subspace and $W_i \otimes I_{\m D_{T^*}}= (P_{\m
N_J }R_i |_{P_{\m N_J }}) \otimes I_{\m D_{T^*}}$, $i =1, \ldots,
n$, it follows that (see \cite{Po06a})
\begin{gather}
\Theta_T^*(\m N_J \otimes \m D_{T^*}) \subset  \m N_J \otimes \m
D_{T} \quad \mbox{and} \quad \Theta_T(\m M_J \otimes \m D_{T})
\subset  \m M_J \otimes \m D_{T^*}.
\end{gather}

The starting point of our consideration of constrained row contractions is the following result \cite[Theorem 3.1]{HMS17}:

\begin{Theorem}\label{cons.factor.}
Let $A$ on $\m H_1$ and $B$ on $\m H_2$ be $n$-tuples of row contractions, and let $L \in \m B(\m
D_B, \m D_{A^*})$ be a contraction. If $T =
\begin{bmatrix}
A&  D_{A^*}L D_B\\
0 & B
\end{bmatrix}$ is a constrained row contraction on $\m H_1 \oplus \m
H_2$, then $A$ and $B$ are also constrained row contractions and there exist unitary operators $\sigma \in \m B(\m D_T, \m D_A \oplus \m D_L)$ and $\sigma_*
\in \m B(\m D_{T^*}, \m D_{B^*} \oplus \m D_{L^*})$ such that
\begin{align*}
\Theta_{J, T}= (I_{\m N}  \otimes \sigma_*^{-1})
\begin{bmatrix}
\Theta_{J, B} &  0 \\
0 &I_{\m N \otimes  \m D_{ L^*}}
\\
\end{bmatrix}
(I_{\m N}\otimes J_L)
\begin{bmatrix}
\Theta_{J,A}  & 0 \\
0 & I_{\m N \otimes \m D_{ L}} \\
\end{bmatrix}
(I_{\m N} \otimes \sigma)
\end{align*}
where $J_L \in \clb(\m D_{A^*} \oplus \m D_L, \m D_B \oplus \m D_{L^*})$ is the Julia-Halmos matrix corresponding to $L$.
\end{Theorem}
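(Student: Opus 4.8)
The plan is to derive the constrained factorization directly from its unconstrained counterpart, Theorem \ref{factor}, by compressing every factor to the model space $\m N_J$. Before doing so I would record that $A$ and $B$ are themselves $J$-constrained. Writing $T_i = \begin{bmatrix} A_i & Y_i \\ 0 & B_i \end{bmatrix}$ with $Y = D_{A^*} L D_B$, each $T_i$ is block upper triangular with respect to $\m H_1 \oplus \m H_2$, and since this triangular form is preserved under sums and products, every noncommutative polynomial $p$ satisfies $p(T) = \begin{bmatrix} p(A) & * \\ 0 & p(B) \end{bmatrix}$. Hence $p(T) = 0$ for all $p \in \m P_J$ forces $p(A) = 0$ and $p(B) = 0$, so $A$ and $B$ are constrained and their constrained characteristic functions $\Theta_{J,A}$ and $\Theta_{J,B}$ are well defined.

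Next I would apply Theorem \ref{factor} on the full Fock space to obtain unitaries $\tau \in \m B(\m D_T, \m D_A \oplus \m D_L)$ and $\tau_* \in \m B(\m D_{T^*}, \m D_{B^*}\oplus\m D_{L^*})$ with
\[
\Theta_T = (I_\Gamma \otimes \tau_*^{-1}) \begin{bmatrix} \Theta_B & 0 \\ 0 & I_{\Gamma \otimes \m D_{L^*}} \end{bmatrix} (I_\Gamma \otimes J_L) \begin{bmatrix} \Theta_A & 0 \\ 0 & I_{\Gamma \otimes \m D_L} \end{bmatrix} (I_\Gamma \otimes \tau),
\]
and these will serve as the desired $\sigma = \tau$, $\sigma_* = \tau_*$. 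The structural fact I would exploit is that $\m N_J$ is co-invariant for the right creation operators, so every multi-analytic operator $M$ satisfies $M(\m M_J \otimes (\cdot)) \subseteq \m M_J \otimes (\cdot)$; this is precisely the invariance recorded just before the statement, and it applies in particular to $\Theta_A$ and $\Theta_B$. The coefficient-space factors $I_\Gamma \otimes \tau$, $I_\Gamma \otimes J_L$, $I_\Gamma \otimes \tau_*^{-1}$, together with the identity summands $I_{\Gamma \otimes \m D_L}$ and $I_{\Gamma \otimes \m D_{L^*}}$, preserve $\m M_J \otimes (\cdot)$ trivially, since $I_\Gamma$ fixes the splitting $\Gamma = \m N_J \oplus \m M_J$. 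Thus every factor in the product above maps $\m M_J \otimes (\cdot)$ into $\m M_J \otimes (\cdot)$.

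The heart of the argument is then the elementary observation that if operators $X$ and $Y$ each carry $\m M_J \otimes (\cdot)$ into $\m M_J \otimes (\cdot)$, then
\[
P_{\m N_J \otimes (\cdot)}\,(XY)\,|_{\m N_J \otimes (\cdot)} = \big(P_{\m N_J \otimes (\cdot)}\,X\,|_{\m N_J \otimes (\cdot)}\big)\big(P_{\m N_J \otimes (\cdot)}\,Y\,|_{\m N_J \otimes (\cdot)}\big),
\]
which follows from $P_{\m N_J} X P_{\m M_J} = 0$ on the relevant subspace. Iterating this across the five-fold product and using the definition $\Theta_{J,T} = P_{\m N_J \otimes \m D_{T^*}}\Theta_T|_{\m N_J \otimes \m D_T}$, I would conclude that $\Theta_{J,T}$ equals the product of the compressions of the individual factors. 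Finally I would identify these compressions: $P_{\m N_J}\Theta_A|_{\m N_J} = \Theta_{J,A}$ and $P_{\m N_J}\Theta_B|_{\m N_J} = \Theta_{J,B}$ by definition; each $I_{\Gamma \otimes \m D_L}$ compresses to $I_{\m N_J \otimes \m D_L}$ (and similarly for $\m D_{L^*}$); and each coefficient-space unitary $U$ compresses via $P_{\m N_J \otimes \cle}(I_\Gamma \otimes U)|_{\m N_J \otimes \cle} = I_{\m N_J} \otimes U$, because $P_{\m N_J}$ and $U$ act on orthogonal tensor legs. Assembling these pieces reproduces exactly the displayed formula.

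The main obstacle I anticipate is not any single computation but making the multiplicativity of compression airtight: one must verify that every factor, including the Julia-Halmos matrix and the bookkeeping unitaries and not only $\Theta_A$ and $\Theta_B$, genuinely preserves $\m M_J \otimes (\cdot)$, so that the cancellation $P_{\m N_J} X P_{\m M_J} = 0$ is available at each of the four multiplication steps. Once this invariance is secured, the remainder is routine tracking of the coefficient spaces $\m D_L$ and $\m D_{L^*}$ through the compressions.
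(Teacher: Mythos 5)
The paper itself gives no proof of this statement --- it is quoted as \cite[Theorem 3.1]{HMS17} --- but your argument is correct and is essentially the one used in that reference: apply the unconstrained factorization of Theorem \ref{factor} and compress to $\m N_J$, using the fact that every factor in the five-fold product (not only $\Theta_A$ and $\Theta_B$ but also the constant unitaries and the Julia--Halmos block, all being multi-analytic) maps $\m M_J\otimes(\cdot)$ into $\m M_J\otimes(\cdot)$, so that the compression is multiplicative and each factor compresses to its constrained counterpart with $\sigma=\tau$, $\sigma_*=\tau_*$. The one point that must be made explicit --- that the right-invariance of $\m M_J$ under $R_1,\ldots,R_n$ (equivalently the $(R_i^*\otimes I)$-co-invariance of $\m N_J\otimes\m D_{T^*}$ recorded just before the theorem) is what licenses the invariance claim for \emph{all} five factors, not just the two characteristic functions --- is exactly the point you flag and settle, so no gap remains.
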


We are now ready to prove the factorization result for constrained row contractions. However, the proof is similar in spirit to that of Theorem \ref{th:factorization}, and thus, we only sketch it.

\begin{Theorem}\label{th:factorization for constrained rc}
Let $\clh$, $\m H_1$, $\m H_0$ and $\m H_{-1}$ be Hilbert spaces, $\m H = \m H_1 \oplus \m H_0 \oplus \m H_{-1}$. Suppose
\[
T_i =\begin{bmatrix}
S_i & *&*\\
0 & N_i&*\\
0 & 0&C_i\\
\end{bmatrix} \quad \quad (i=1, \ldots, n).
\]
If $T$ is a constrained row contraction, then $S$, $N$ and $C$ are also constrained row contractions on $ \m H_1 , \m H_0$ and $\m H_{-1} $, respectively, and there exist Hilbert spaces $\m E_1, \m E_2$ and $\m E$ and unitary operators $\tau_1 \in \m B(\m D_{N^*} \oplus \m E, \m D_C \oplus\m E_1)$ and $\tau_2 \in B(\m D_{S^*} \oplus \m E_2, \m D_N \oplus\m E)$ such that $\Theta_{J,T}$ coincides with
\[
\begin{bmatrix} \Theta_{J,C} & 0 \\
0 & I_{\cln \otimes  \m E_1}
\end{bmatrix}
(I_\cln \otimes \tau_1)
\begin{bmatrix}\Theta_{J,N} & 0\\
 0& I_{\cln \otimes \m E}
\end{bmatrix}
(I_\cln \otimes \tau_2)
\begin{bmatrix}
\Theta_{J,S} & 0\\
0& I_{\cln \otimes \m E_2}
\end{bmatrix}.
\]
\end{Theorem}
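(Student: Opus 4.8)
The plan is to run the proof of Theorem \ref{th:factorization} verbatim, with two substitutions: the full-Fock-space factorization Theorem \ref{factor} is replaced by its constrained analogue Theorem \ref{cons.factor.}, and every vacuum tensor factor $\Gamma$ is replaced by the constrained subspace $\cln = \m N_J$. Since the target factorization is built by two nested applications of the block-triangular factorization, the work is to set up the two triangular corners, apply Theorem \ref{cons.factor.} to each, and then absorb the connecting Julia--Halmos matrices and coefficient-space unitaries into $\tau_1$ and $\tau_2$.

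First I would peel off the bottom row. Write $T_i = \begin{bmatrix} A_i & Y_i \\ 0 & C_i \end{bmatrix}$ with $A_i = P_{\m H_1 \oplus \m H_0} T_i|_{\m H_1 \oplus \m H_0} = \begin{bmatrix} S_i & X_i \\ 0 & N_i \end{bmatrix}$. Applying Theorem \ref{th:NFblock} to the row contraction $T$ shows that $A$ and $C$ are row contractions and that $Y = D_{A^*} L_Y D_C$ for some contraction $L_Y : \m D_C \to \m D_{A^*}$; a second application of Theorem \ref{th:NFblock}, now to $A$, gives that $S$ and $N$ are row contractions and $X = D_{S^*} L_X D_N$ for some contraction $L_X : \m D_N \to \m D_{S^*}$.

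Next I would apply Theorem \ref{cons.factor.} to the constrained row contraction $T = \begin{bmatrix} A & D_{A^*} L_Y D_C \\ 0 & C \end{bmatrix}$. This step simultaneously certifies that $A$ and $C$ are $J$-constrained and produces unitaries $u \in \m B(\m D_T, \m D_A \oplus \m D_{L_Y})$ and $u_* \in \m B(\m D_{T^*}, \m D_{C^*} \oplus \m D_{L_Y^*})$ with
\[
\Theta_{J,T} = (I_\cln \otimes u_*^{-1}) \begin{bmatrix} \Theta_{J,C} & 0 \\ 0 & I_{\cln \otimes \m D_{L_Y^*}} \end{bmatrix} (I_\cln \otimes J_{L_Y}) \begin{bmatrix} \Theta_{J,A} & 0 \\ 0 & I_{\cln \otimes \m D_{L_Y}} \end{bmatrix} (I_\cln \otimes u).
\]
Because $A$ is now known to be $J$-constrained, I may legitimately apply Theorem \ref{cons.factor.} once more, this time to $A = \begin{bmatrix} S & D_{S^*} L_X D_N \\ 0 & N \end{bmatrix}$, obtaining that $S$ and $N$ are constrained together with unitaries $\sigma, \sigma_*$ and a factorization of $\Theta_{J,A}$ through $\Theta_{J,N}$, $\Theta_{J,S}$ and $J_{L_X}$.

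Finally I would substitute the factorization of $\Theta_{J,A}$ into the expression for $\Theta_{J,T}$ and carry out exactly the block bookkeeping of the proof of Theorem \ref{th:factorization}, now over $\cln$: the inner $\m D_{L_Y}$ summand rides along as an identity; the matrix $J_{L_Y}$ together with $(I_\cln \otimes \sigma_*^{-1})$ fuses into a single unitary $\tau_1$, while $J_{L_X}$, padded by the $\m D_{L_Y}$ identity, becomes $\tau_2$; and the surviving $(I_\cln \otimes \sigma)$ and $(I_\cln \otimes u)$ combine into a coefficient-space unitary $v$, which together with $u_*$ is absorbed into the word \emph{coincides}. Setting $\m E_1 = \m D_{L_Y^*}$, $\m E_2 = \m D_{L_X} \oplus \m D_{L_Y}$ and $\m E = \m D_{L_X^*} \oplus \m D_{L_Y}$ yields the stated form. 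The only genuinely new point over Theorem \ref{th:factorization} -- and the step I would flag -- is the recursive legitimacy of applying Theorem \ref{cons.factor.} to the corner $A$, which is secured precisely because the first application already guarantees $A$ is constrained; the rest is the identical rearrangement, so I would only sketch it.
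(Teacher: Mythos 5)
Your proposal is correct and follows essentially the same route as the paper's own proof: the paper likewise reduces to two nested applications of Theorem \ref{cons.factor.} (first to $T$, then to the corner $A$, whose constrained property is supplied by the first application) and then repeats the block bookkeeping of Theorem \ref{th:factorization} over $\cln$ with the same choices $\m E_1 = \m D_{L_Y^*}$, $\m E_2 = \m D_{L_X}\oplus \m D_{L_Y}$, $\m E = \m D_{L_X^*}\oplus \m D_{L_Y}$.
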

\begin{proof}
We use the same notations as in the proof of Theorem \ref{th:factorization}: $ T_i =   \begin{bmatrix}
A_i & Y_i\\
0 & C_i \end{bmatrix}$, where $A_i =  \begin{bmatrix}
S_i & X_i\\
0 & N_i \end{bmatrix}  = P_{\m H_1 \oplus \m H_0} T_i|_{\m H_1 \oplus \m H_0}$ for all $i=1, \ldots, n$. By Theorem \ref{th:NFblock} and first part of Theorem \ref{cons.factor.}, we already know that $A$ and $C$ are constrained row contractions and $Y = D_{A^*} L_Y D_C$ for some contraction $L_Y: \m D_C \to \m D_{A^*}$. Repeating the argument to the constrained row contraction $A$, we obtain that $S$ and
$N$ are also constrained row contractions and $X=  D_{S^*} L_X D_N$ for some contraction $L_X: \m D_N  \to \m D_{S^*}$. Then, applying Theorem \ref{cons.factor.} to the constrained row contractions $ T =  \begin{bmatrix}
A& D_{A^*} L_Y D_C\\
0 & C \end{bmatrix}$ and $A =  \begin{bmatrix}
S& D_{S^*} L_X D_N \\
0 & N \end{bmatrix} $, we find
\[
\Theta_{J,T} =(I_\cln \otimes u_*^{-1})
\begin{bmatrix}
\Theta_{J,C} & 0 \\
0 & I_{\cln \otimes  \m D_{ L_Y^*}}
\end{bmatrix}
(I_\cln \otimes J_{L_Y})
\begin{bmatrix}
\Theta_{J,A} & 0\\ 0& I_{\cln \otimes \m D_{ L_Y}}\end{bmatrix}
(I_\cln \otimes u),
\]
for some unitary operators $u \in \m B (\m D_T, \m D_{A} \oplus \m D_{L_Y})$ and $u_* \in \m B (\m D_{T^*}, \m D_{C^*} \oplus \m D_{L_Y^*})$, and
\[
\Theta_{J,A} =(I_\cln \otimes \sigma_*^{-1})
\begin{bmatrix}
\Theta_{J,N} & 0 \\
0 & I_{\cln \otimes  \m D_{ L_X^*}}
\end{bmatrix}
(I_\cln \otimes J_{L_X})
\begin{bmatrix}
\Theta_{J,S} & 0\\ 0& I_{\cln \otimes \m D_{ L_X}}\end{bmatrix}
(I_\cln \otimes \sigma),
\]
for some unitary operators $\sigma \in \m B (\m D_A, \m D_{S} \oplus \m D_{L_X})$ and $\sigma_* \in \m B (\m D_{A^*}, \m D_{N^*} \oplus \m D_{L_X^*})$. Finally, by the same reasoning as in the proof of Theorem \ref{th:factorization}, it follows that
\[
\Theta_{J,T}
=(I_\cln \otimes u_*^{-1})
\begin{bmatrix}
\Theta_{J,C} & 0 \\
0 & I_{\cln \otimes  \m E_1}
\end{bmatrix}
(I_\cln \otimes \tau_1)\begin{bmatrix}
\Theta_{J,N}   & 0\\ 0& I_{\cln \otimes \m E}
\end{bmatrix}
(I_\cln \otimes \tau_2)
\begin{bmatrix}
\Theta_{J,S}   & 0\\ 0& I_{\cln \otimes \m E_2}
\end{bmatrix}(I_\cln \otimes v),
\]
where $\m E_1 = \m D_{L_Y^*}$, $\m E_2 = \m D_{L_X}\oplus \m D_{L_Y}$ and $\m E = \m D_{L_X^*} \oplus \m D_{L_Y}$, and $\tau_1: (\m D_{N^*} \oplus \m D_{L^*_X}) \oplus \m D_{L_Y} \raro \m D_C \oplus\m D_{L^*_Y}$, $\tau_2: (\m D_{S^*} \oplus \m D_{L_X}) \oplus \m D_{L_Y} \raro (\m D_N \oplus \m D_{L^*_X}) \oplus \m D_{L_Y}$ and
$ v : \m D_T \raro (\m D_S \oplus \m D_{L_X})\oplus \m D_{L_Y}$ are unitary operators defined by
\[
I_{\cln}  \otimes \tau_1 = (I_\cln \otimes J_{L_Y})
\begin{bmatrix}
(I_\cln \otimes \sigma_*^{-1})  & 0\\ 0& I_{\cln \otimes \m D_{ L_Y}}
\end{bmatrix} \quad \mbox{and} \quad
I_{\cln}  \otimes \tau_2 = \begin{bmatrix}
(I_\cln \otimes J_{L_X})  & 0\\ 0& I_{\cln \otimes \m D_{ L_Y}}
\end{bmatrix},
\]
and
\[
I_\cln \otimes v = \begin{bmatrix} I_\cln \otimes \sigma & 0\\ 0& I_{\cln \otimes \m D_{ L_Y}}\end{bmatrix}
(I_\cln \otimes u).
\] This completes the proof of the theorem.
\end{proof}

The particular case of constrained row contractions where the noncommutative variety is given by
\[
\clp_{J_c}= \{L_iL_j - L_jL_i : i,j=1,\ldots,n \},
\]
gives rise to commuting row contractions on Hilbert spaces. In this case, $\cln_{J_c}$ becomes the symmetric Fock space $\Gamma_s$ and the $n$-tuple $V$ on $\Gamma_s$, where $V_j= P_{\Gamma_s}L_j|_{\Gamma_s}$, $j=1,\ldots,n$, becomes the left creation operators on $\Gamma_s$ (see \cite{B", Po06a},). More specifically, $V$ on $\cln_{J_c}$ and $(M_{z_1}, \ldots, M_{z_n})$ on $H^2_n$ are unitarily equivalent, where $H^2_n$ is the Drury-Arveson space and $M_{z_i}$ is the multiplication operator by the coordinate function $z_i$ on $H^2_n$, $i=1, \ldots, n$. Under this identification, $P_{\Gamma_s}\clf_n^{\infty}|_{\Gamma_s}$ corresponds to $\clm(H^2_n)$, the multiplier algebra of $H^2_n$ (see also Section \ref{sect-2}).

From this point of view, if $T$ on $\clh$ is a constrained row contraction corresponding to $\clp_{J_c}$, then $T_iT_j = T_j T_i$ for all $i,j=1, \ldots, n$, and one can identify the constrained characteristic function $ \Theta_{J_c,T} = P_{\cln_{J_c} \otimes \cld_{T^*}} \Theta_T|_{\cln_{J_c} \otimes \cld_T}$ with the $\clb(\cld_T, \cld_{T^*})$-valued multiplier $\theta_T: \B^n \to \clb(\cld_T, \cld_{T^*})$ in $\clm(\cld_T, \cld_{T^*})$ \cite{BES05, B", Po06a}, the characteristic function of the commuting tuple $T$ (see \eqref{eq-ch fn commutative}). In the remaining part of this paper, the identification of $\Theta_{J_c,T}$ and $\theta_T$ will be used interchangeably.

The first half of the following theorem is essentially a particular (the commutative) case of Theorem \ref{th:factorization for constrained rc}. The partially isometric property of $V_2$ in the remaining part is a special feature of $n$-tuples, $n>1$, of commuting row contractions.

\begin{Theorem}\label{converse}
Let $\m H_1$, $\m H_0$ and $\m H_{-1}$ be Hilbert spaces,  and let  $ T = (T_1, \ldots, T_n) $ be an $ n $-tuple of commuting row contraction on $\m H_1 \oplus  \m H_0 \oplus \m H_{-1}$ such that each $ T_i$ has the following matrix representation
\[
T_i =\begin{bmatrix}
S_i & *&*\\
0 & N_i&*\\
0 & 0&C_i\\
\end{bmatrix} \quad \quad (i =1, \ldots, n).
\]
  with respect to $\m H_1 \oplus  \m H_0 \oplus \m H_{-1}$. Then $S$ on $\clh_1$, $N$ on $\m H_0$ and $C$ on $\m H_{-1}$ are commuting row contractions, and there exist Hilbert spaces $\m E_1, \m E_2$ and $\m E$, and unitary operators $U_1 \in \m B(\m D_{N^*} \oplus \m E, \m D_C \oplus\m E_1)$ and $U_2 \in B(\m D_{S^*} \oplus \m E_2, \m D_N \oplus\m E)$ such that $\theta_T$ coincides with
\[
\begin{bmatrix}
\theta_C & 0 \\
0 & I_{H^2_n \otimes  \m E_1}
\end{bmatrix}
(I_{H^2_n} \otimes U_1)
\begin{bmatrix}\theta_N & 0\\
0& I_{H^2_n \otimes \m E}
\end{bmatrix}
(I_{H^2_n} \otimes U_2)
\begin{bmatrix}
\theta_S & 0\\
0& I_{H^2_n \otimes \m E_2}
\end{bmatrix}.
\]
In addition, if $S$ and $C$ are  Drury-Arveson shift and spherical co-isometry, respectively, then there exist a Hilbert space $ \m E $, a co-isometry $ G_1 \in \m (H^2_n \otimes (\m D_{N^*}\oplus \m E), H^2_n \otimes \m D_{T^*} )$  and a partial isometry $ G_2 \in \m (H^2_n \otimes \m D_T, H^2_n \otimes (\m D_{N} \oplus \m E))$ such that
\[
\theta_T = G_1 	\begin{bmatrix}
\theta_N& 0 \\
0 & I_{\Gamma \otimes  {\m E}}
\end{bmatrix}G_2.
\]
		
\end{Theorem}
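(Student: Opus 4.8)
The plan is to obtain the three–term product by specializing Theorem \ref{th:factorization for constrained rc} to the commutative variety, and then to feed in the two extra hypotheses on $S$ and $C$ so that the outer two factors collapse into a co-isometry and a partial isometry. For the first assertion I would note that the commuting case corresponds to the noncommutative variety $\clp_{J_c} = \{L_iL_j - L_jL_i\}$, for which $\cln_{J_c} \cong H^2_n$ and the constrained characteristic function $\Theta_{J_c,T}$ is identified with the multiplier $\theta_T$, exactly as recalled in the paragraph preceding the theorem. Running the proof of Theorem \ref{th:factorization for constrained rc} in this setting, with $\cln$ replaced by $H^2_n$ and each $\Theta_{J,\bullet}$ by $\theta_\bullet$, produces the unitaries $U_1,U_2$ (the commutative versions of $\tau_1,\tau_2$) together with the auxiliary unitaries $u_*\in\clb(\m D_{T^*},\m D_{C^*}\oplus\m E_1)$ and $v\in\clb(\m D_T,\m D_S\oplus\m E_2)$ arising internally in that proof, and the explicit factorization
\[
\theta_T=(I_{H^2_n}\otimes u_*^{-1})\begin{bmatrix}\theta_C&0\\0&I_{H^2_n\otimes\m E_1}\end{bmatrix}(I_{H^2_n}\otimes U_1)\begin{bmatrix}\theta_N&0\\0&I_{H^2_n\otimes\m E}\end{bmatrix}(I_{H^2_n}\otimes U_2)\begin{bmatrix}\theta_S&0\\0&I_{H^2_n\otimes\m E_2}\end{bmatrix}(I_{H^2_n}\otimes v),
\]
where $\m E_1=\m D_{L_Y^*}$, $\m E_2=\m D_{L_X}\oplus\m D_{L_Y}$ and $\m E=\m D_{L_X^*}\oplus\m D_{L_Y}$.

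For the additional statement I would first record the two consequences of the hypotheses. Since $C$ is a spherical co-isometry, $\sum_i C_iC_i^*=I$, so $\m D_{C^*}=\{0\}$, whence $\theta_C$ maps into a trivial space and is zero, and $u_*$ becomes a unitary from $\m D_{T^*}$ onto $\m E_1$. Since $S$ is a Drury–Arveson shift, its characteristic function vanishes, $\theta_S\equiv 0$; this uses the standard fact that a pure commuting row contraction is a Drury–Arveson shift precisely when its characteristic function is zero (see \cite{BES05,Po06a}). Substituting $\theta_C=0$ and $\theta_S=0$ into the product above leaves the central factor untouched, and I would set
\[
G_1=(I_{H^2_n}\otimes u_*^{-1})\begin{bmatrix}0&0\\0&I_{H^2_n\otimes\m E_1}\end{bmatrix}(I_{H^2_n}\otimes U_1),\qquad G_2=(I_{H^2_n}\otimes U_2)\begin{bmatrix}0&0\\0&I_{H^2_n\otimes\m E_2}\end{bmatrix}(I_{H^2_n}\otimes v),
\]
so that $\theta_T=G_1\,\mathrm{diag}(\theta_N,I)\,G_2$, with $G_1\in\clb(H^2_n\otimes(\m D_{N^*}\oplus\m E),H^2_n\otimes\m D_{T^*})$ and $G_2\in\clb(H^2_n\otimes\m D_T,H^2_n\otimes(\m D_N\oplus\m E))$.

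It then remains to identify the metric type of each factor. For $G_1$: because $\m D_{C^*}=\{0\}$, the middle block is the orthogonal projection of $H^2_n\otimes(\m D_C\oplus\m E_1)$ onto $H^2_n\otimes\m E_1$, hence a co-isometry; conjugating by the unitaries $I\otimes u_*^{-1}$ and $I\otimes U_1$ preserves this, so $G_1G_1^*=I$. For $G_2$: the middle block annihilates the summand $H^2_n\otimes\m D_S$ and is the identity on $H^2_n\otimes\m E_2$, hence is a partial isometry; conjugating by the unitaries $I\otimes U_2$ and $I\otimes v$ again yields a partial isometry. This gives the claimed decomposition.

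The main point to pin down — and the only genuine subtlety — is why $G_2$ is merely a partial isometry rather than an isometry, in contrast with Corollary \ref{cor1}. There, $S$ is a row isometry, so $\m D_S=\{0\}$ and the block $\mathrm{diag}(\theta_S,I)$ is an isometry. Here $S$ is only a Drury–Arveson shift: for $n>1$ one has $\m D_S\neq\{0\}$ even though $\theta_S\equiv 0$ (equivalently $S^*S\neq I_{\m H_1^n}$, which one checks already fails on the degree-one vectors of $H^2_n$), so the summand $H^2_n\otimes\m D_S$ is genuinely killed and $G_2$ cannot in general be an isometry. Making this observation precise is exactly the ``special feature of $n$-tuples, $n>1$'' flagged before the theorem, and it is the step I would treat most carefully.
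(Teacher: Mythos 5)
Your proposal is correct and follows essentially the same route as the paper: the first assertion is obtained by specializing Theorem \ref{th:factorization for constrained rc} to the variety $\clp_{J_c}$, and the second by substituting $\m D_{C^*}=\{0\}$ and $\theta_S\equiv 0$ (the paper cites \cite[Proposition 2.6]{Po07} for the latter) into that factorization and absorbing the outer factors into $G_1$ and $G_2$. Your closing remark on why $G_2$ is only a partial isometry when $n>1$ (since $\m D_S\neq\{0\}$ for the Drury--Arveson shift) correctly makes explicit the point the paper flags just before the theorem.
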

\begin{proof} We only need to prove the second half. Since $C$ is a spherical co-isometry, $D_{C^*} = 0$, and hence $\m D_{C^*} =\{0_{\m H_{-1}}\} $. On the other hand, since $S$ is the Drury-Arveson shift,
$\theta_S$ is identically zero \cite[Proposition 2.6]{Po07}. Therefore, the unitary operators $ u_* ,\sigma$ and $v$ and the characteristic function $\Theta_{J,T}$, in terms of $\theta_T$, in the proof of Theorem \ref{th:factorization for constrained rc} becomes
\[
u_* \in \m B (\m D_{T^*},   \{ 0_{\m H_{-1}}\} \oplus \m D_{L_Y^*}), \; \sigma \in \m B (\m D_A, \m D_{S} \oplus \m D_{L_X}), \; \mbox{and}\; v \in \m B(\m D_T, (\m D_S \oplus \m D_{L_X})\oplus  \m D_{L_Y}),
\]
and
\begin{align*}
\theta_T
&=G_1
\begin{bmatrix}
\theta_N   & 0\\ 0& I_{H^2_n\otimes (\m D_{L^*_X}\oplus \m D_{ L_Y})}
\end{bmatrix}
G_2,
\end{align*}
respectively,
where $\m E = \m D_{L^*_X}\oplus \m D_{ L_Y}$,
\[
G_1 = (I_{H^2_n} \otimes u_*^{-1})
\begin{bmatrix}
0_C & 0 \\
0 & I_{H^2_n} \otimes  \m D_{ L_Y^*}
\end{bmatrix}
(I_{H^2_n}\otimes U_1)  \in \m B({H^2_n}\otimes (\m D_{N^*}\oplus \m E), {H^2_n}\otimes \m D_{T^*} ),
\]
\[
G_2 = 	(I_{H^2_n}\otimes U_2)
\begin{bmatrix}
0_S   & 0\\ 0& I_{{H^2_n}\otimes(\m D_{L_X}\oplus \m D_{ L_Y})}
\end{bmatrix}
(I_{H^2_n}\otimes v) \in   \m B( {H^2_n}\otimes \m D_T,{H^2_n} \otimes (\m D_{N}\oplus \m E)).
\]
Since $0_C 0_C^* = I_{{H^2_n}\otimes \{0_{\m H_{-1}}\} }$ and $0_S \equiv 0$, it is clear that $G_1$ is an co-isometry and $G_2$ is a partial isometry.
\end{proof}

If $N$ on $\m H_0$ is nilpotent of order $m$, then the above result clearly yields that the characteristic function $\theta_T$ is a polynomial of degree $ \leq m$. Moreover, in view of Theorem \ref{structure-commutative}, we have the following:

\begin{Theorem}\label{Theorem-Canonical}
Let $ T = (T_1, \ldots, T_n) $ be an $n$-tuple of commuting row contraction on a Hilbert space $ \m H $ such that $\theta_T$ is a polynomial of degree $m$, and let
\[
\clm = \overline{\text{Span}} \{ T^{\al}D_{T^*}h : h \in \clh, |\al| \geq m, \al \in \Z_+^n \}.
\]
If $T$ is regular, then there exist a Hilbert space $\m  E $, a co-isometry $ G_1 \in \m B ({H^2_n} \otimes (\m D_{N^*}\oplus \m E), {H^2_n} \otimes \m D_{T^*}) $ and a partial isometry $ G_2 \in \m B( {H^2_n}\otimes \m D_T, {H^2_n} \otimes (\m D_N \oplus \m E))$ such that
\[
\theta_T = G_1 	\begin{bmatrix}
\theta_N& 0 \\
0 & I_{{H^2_n} \otimes  \m D_{\m E}}
\end{bmatrix}G_2.
\]
\end{Theorem}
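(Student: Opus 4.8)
The plan is to obtain the factorization as a direct consequence of the canonical representation furnished by Theorem \ref{structure-commutative}, followed by an appeal to the commutative factorization already proved in the second half of Theorem \ref{converse}. In effect, the three diagonal corners of the canonical form play the roles of the tuples $S$, $N$, $C$ in Theorem \ref{converse}, and the regularity hypothesis is exactly what upgrades the pure-partial-isometry corner $M$ to a genuine Drury-Arveson shift.

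First I would invoke Theorem \ref{structure-commutative}. Since $\theta_T$ is a polynomial of degree $m$, it yields the orthogonal decomposition $\clh = \clm \oplus \clh_{\text{nil}} \oplus \clh_c$, with $\clm$ as in the statement, together with the block upper-triangular form
\[
T_i = \begin{bmatrix} M_i & * & * \\ 0 & N_i & * \\ 0 & 0 & W_i \end{bmatrix} \quad (i = 1, \ldots, n),
\]
where $M$ on $\clm$ is a pure row contraction, $N$ on $\clh_{\text{nil}}$ is a commuting nilpotent tuple of order $\leq m$, and $W$ on $\clh_c$ is a commuting spherical co-isometry. Next I would use the standing assumption that $T$ is regular, which by the convention fixed just after Theorem \ref{structure-commutative} means precisely that the corner $M$ is regular. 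Theorem \ref{Drury Arveson Shift} then shows that $M$ is unitarily equivalent to the Drury-Arveson shift on $H^2_n(\cln)$. Setting $S := M$ and $C := W$, the diagonal corners therefore satisfy exactly the hypotheses of the second half of Theorem \ref{converse}: $S$ is a Drury-Arveson shift, $C$ is a spherical co-isometry, and $N$ is the nilpotent middle piece.

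Finally I would quote the conclusion of the second half of Theorem \ref{converse} verbatim: it produces a Hilbert space $\m E$, a co-isometry $G_1 \in \m B(H^2_n \otimes (\m D_{N^*} \oplus \m E), H^2_n \otimes \m D_{T^*})$ and a partial isometry $G_2 \in \m B(H^2_n \otimes \m D_T, H^2_n \otimes (\m D_N \oplus \m E))$ such that
\[
\theta_T = G_1 \begin{bmatrix} \theta_N & 0 \\ 0 & I_{H^2_n \otimes \m E} \end{bmatrix} G_2,
\]
which is the asserted identity. The single point requiring genuine care — and the main, though modest, obstacle — is the verification that the corners delivered by Theorem \ref{structure-commutative} really meet the hypotheses of Theorem \ref{converse}. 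The nilpotency of $N$ and the spherical-co-isometry property of $W = C$ are immediate from the structure theorem, but the Drury-Arveson-shift property of $S = M$ is not automatic: it rests entirely on regularity through Theorem \ref{Drury Arveson Shift}. Indeed, the concluding example of the paper exhibits a pure partial isometric commuting tuple whose characteristic function is not a polynomial and which is not a Drury-Arveson shift, so this hypothesis cannot be dropped. Everything beyond this identification is bookkeeping inherited directly from the proof of Theorem \ref{converse}.
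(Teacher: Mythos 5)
Your proposal is correct and follows essentially the same route as the paper, which derives this theorem directly by combining the canonical decomposition of Theorem \ref{structure-commutative} (with regularity upgrading the corner $M$ to a Drury-Arveson shift via Theorem \ref{Drury Arveson Shift}) with the second half of Theorem \ref{converse}. Your explicit verification that the three diagonal corners meet the hypotheses of Theorem \ref{converse} is exactly the content the paper leaves implicit in its phrase ``in view of Theorem \ref{structure-commutative}.''
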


Note that if $n=1$, then the partial isometry $G_2$ becomes an isometry (see \cite[Theorem 2.2]{FPS17}).

\newsection{Uniqueness of the canonical representations}\label{sect-6}

Recall that the canonical representation of a commuting row contraction $T$ with polynomial characteristic function of degree $m$ is the upper triangular representation of $T_i$ on $\clh = \clm \oplus \clh_{nil} \oplus \clh_c$ as in Theorem \ref{structure-commutative}. In this section, we analyze the structure of canonical representations of commuting row contractions with polynomial characteristic functions.

We first prove that $\clm$ and $\clh_c$ of the canonical representation are optimal in an appropriate sense (see \cite[Proposition 2.1]{Po13} for $n$-tuples of noncommutative row contractions).

\begin{Theorem}\label{Theorem-Uniqueness}
Let $T$ be an $n$-tuple of commuting row contraction on a Hilbert space $ \m H $ such that $\theta_T$ is a polynomial of degree $m$ and also $T$ is regular. Suppose $\clh_1$, $\clh_0$ and $\clh_{-1}$ are Hilbert spaces, and let $\clh = \clh_1 \oplus \clh_0 \oplus \clh_{-1}$. If the matrix representation of $T_i$ with respect to $\clh = \clh_1 \oplus \clh_0 \oplus \clh_{-1}$ is given by
\[
T_i = \begin{bmatrix}
M_i^{'} & * &* \cr
0 & N_i^{'} & * \cr
0 & 0 & W_i^{'}   \cr
\end{bmatrix} \quad \quad (i=1, \ldots, n),
\]
where $M'$ on $\clh_1$ is a Drury-Arveson shift, $N'$ on $\clh_0$ is nilpotent of order $m$ and $W^{'}$ on $\clh_{-1}$ is a spherical co-isometry, then $ \m M \subseteq \m H_1$ and $\m H_c \supseteq \m H_{-1}$.
\end{Theorem}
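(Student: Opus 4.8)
The plan is to prove the two inclusions separately, both resting on the single structural fact that $\clh_{-1}\subseteq\ker D_{T^*}$. First I would record the invariance consequences of the block-triangular form: since each $T_i$ is upper triangular with respect to $\clh_1\oplus\clh_0\oplus\clh_{-1}$, the subspaces $\clh_1$ and $\clh_1\oplus\clh_0$ are joint $T$-invariant, while dually $\clh_{-1}$ and $\clh_0\oplus\clh_{-1}$ are joint $T^*$-invariant; moreover $T_i^*|_{\clh_{-1}}=(W_i')^*$. Because $W'$ is a spherical co-isometry, $\sum_i W_i'(W_i')^*=I_{\clh_{-1}}$, so for $h\in\clh_{-1}$ one computes $\|D_{T^*}h\|^2=\langle D_{T^*}^2h,h\rangle=\|h\|^2-\sum_i\|T_i^*h\|^2=\|h\|^2-\sum_i\|(W_i')^*h\|^2=0$. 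Hence $\clh_{-1}\subseteq\ker D_{T^*}$.

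For the inclusion $\clh_c\supseteq\clh_{-1}$ I would use the convention-free description established in the construction preceding Theorem \ref{structure-commutative}, namely $\clh_c=\clh\ominus\clk=\{h\in\clh: D_{T^*}T^{*\al}h=0 \text{ for all }\al\in\Z_+^n\}$. Given $h\in\clh_{-1}$ and any $\al$, the vector $T^{*\al}h$ lies in $\clh_{-1}$ by $T^*$-invariance, and $\clh_{-1}\subseteq\ker D_{T^*}$ then forces $D_{T^*}T^{*\al}h=0$; thus $h\in\clh_c$, giving $\clh_{-1}\subseteq\clh_c$.

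The harder inclusion is $\clm\subseteq\clh_1$, and here the nilpotency of $N'$ enters. Since $\clh_1$ is closed, it suffices to show each generator $T^{\al}D_{T^*}h$ with $|\al|\ge m$ is orthogonal to $\clh_0\oplus\clh_{-1}=\clh_1^{\perp}$. For $v\in\clh_0\oplus\clh_{-1}$ I would write $\langle T^{\al}D_{T^*}h,v\rangle=\langle D_{T^*}h,T^{*\al}v\rangle$, and the key claim is that $T^{*\al}v\in\clh_{-1}$ whenever $|\al|\ge m$. Decompose $v=g+w$ with $g\in\clh_0$, $w\in\clh_{-1}$; then $T^{*\al}w\in\clh_{-1}$, while $T^{*\al}g\in\clh_0\oplus\clh_{-1}$ with $\clh_0$-component $P_{\clh_0}T^{*\al}g$. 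Because $\clh_0=(\clh_0\oplus\clh_{-1})\ominus\clh_{-1}$ is semi-invariant for the tuple $T^*$ (a difference of nested $T^*$-invariant subspaces), the compression $X\mapsto P_{\clh_0}X|_{\clh_0}$ is multiplicative, so $P_{\clh_0}T^{*\al}g=((N')^{\al})^*g$; and $(N')^{\al}=0$ for $|\al|\ge m$ since $N'$ is nilpotent of order $m$. Hence $T^{*\al}g\in\clh_{-1}$, so $T^{*\al}v\in\clh_{-1}\subseteq\ker D_{T^*}$, which yields $\langle D_{T^*}h,T^{*\al}v\rangle=\langle h,D_{T^*}T^{*\al}v\rangle=0$. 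Therefore $T^{\al}D_{T^*}h\perp\clh_0\oplus\clh_{-1}$ for all $|\al|\ge m$, and $\clm\subseteq\clh_1$.

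The main obstacle is the bookkeeping in this last step: one must keep the $T$-invariant and $T^*$-invariant corners straight and invoke semi-invariance in the correct direction in order to convert $(N')^{\al}=0$ into $P_{\clh_0}T^{*\al}|_{\clh_0}=0$. Everything then hinges on $\clh_{-1}\subseteq\ker D_{T^*}$, supplied by the co-isometric corner, which drives both inclusions; I note that beyond guaranteeing that $\clh_1$ is $T$-invariant, only the co-isometry of $W'$ and the nilpotency of $N'$ of order $m$ are actually used.
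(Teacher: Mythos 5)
Your proof is correct and follows essentially the same route as the paper's: both arguments rest on the facts that the spherical co-isometry $W'$ forces $D_{T^*}$ to vanish on $\clh_{-1}$ (equivalently, the third row and column of $D_{T^*}$ are zero), that $(N')^{\al}=0$ kills the middle block of $T^{\al}$ for $|\al|\ge m$, and that $\clh_c$ is the maximal subspace on which $T^*$ acts isometrically. The only difference is presentational: the paper computes the block matrix of $T^{\al}D_{T^*}$ directly and reads off that its range lies in $\clh_1$, whereas you obtain the same conclusion by pairing against $\clh_1^{\perp}$ via adjoints and semi-invariance.
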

\begin{proof} Since $W'$ is a spherical co-isometry, with respect to $\clh = \clh_1 \oplus \clh_0 \oplus \clh_{-1}$, we have $D_{T^*}^2  = \begin{bmatrix}
*&   *   &   *    \cr
* &  * &   *    \cr
*  &  *&  0 \cr
\end{bmatrix}$. If $D_{T^*} = \begin{bmatrix} * & * & A_{31} \\ * & * & A_{32}\\ A_{31} & A_{32} & A_{33}\end{bmatrix}$, then $D_{T^*}^2  = \begin{bmatrix}
*&   *   &   *    \cr
* &  * &   *    \cr
*  &  *&  A_{31}A_{31}^*+ A_{32} A_{32}^* + A_{33} A_{33}^*\cr
\end{bmatrix}$, and hence $A_{31}A_{31}^*+ A_{32} A_{32}^* + A_{33} A_{33}^* = 0$. It follows that $ A_{31} = A_{32} = A_{33} = 0 $. Therefore $D_{T^*}  = \begin{bmatrix}
*&   *   &   0    \cr
* &  * &   0    \cr
0 & 0 &  0 \cr
\end{bmatrix}$, and hence on $\m H= \m H_1 \oplus   \m H_0 \oplus  \m H_{-1} $, we have
\[
T^\alpha D_{T^*}  =  \begin{bmatrix}
M^{'\alpha} &   *   &   *  \cr
0&  0 &   *   \cr
0 & 0 &  W^{'\alpha}  \cr
\end{bmatrix}
\begin{bmatrix}
*&   *   &   0    \cr
* &  * &   0    \cr
0 & 0 &  0 \cr
\end{bmatrix}
= 		\begin{bmatrix}
*&   *   &   0    \cr
0& 0 &   0    \cr
0 & 0 &  0 \cr
\end{bmatrix},
\]
and $T^\alpha D_{T^*} \clh \subseteq \m H_1$ for all $\alpha \in \Z_+^n$ with $ |\alpha|\geq m$. Then, $\m M \subseteq \m H_1 $, where, on the other hand, $\m H_{-1} \subseteq  \m H_c$ as $\m H_c$ is the maximal closed joint $T^*$ invariant subspace of $\clh$ such that 	$\begin{bmatrix}T_1^*|_{\m H_c} \cr \vdots \cr T_n^*|_{\m H_c}\cr\end{bmatrix}$ is an isometry.
\end{proof}

Now we prove that the diagonal entries of the canonical representation of $T$, as in Theorem \ref{structure-commutative}, is a complete unitary invariant. The noncommutative version of this is due to Popescu \cite[Proposition 2.1]{Po13}.

\begin{Proposition}
Let $T$ on $\clh$ and $T'$ on $\clh'$ be $n$-tuples of commuting row contractions with polynomial characteristic functions of degree $m$. Assume that
\[
T_i= \begin{bmatrix}
M_i    &     *     &     *    \cr
 0         &     N_i   &     *    \cr
 0         &     0     &     W_i  \cr
\end{bmatrix} ~~\text{and}~~~~~  T_i^{'} = \begin{bmatrix}
M_i^{'}&      *        &     *    \cr
 0           &     N_i^{'}   &     *    \cr
 0           &      0        &   W_i^{'}\cr
\end{bmatrix} \quad \quad (i=1,\ldots,n),
\]
are the canonical representations of $T$ and $T'$ on
$\clh= \clm \oplus \clh_{\text{nil}} \oplus \clh_c$ and $\clh^{'} = \clm^{'} \oplus \clh_{\text{nil}}^{'} \oplus \clh_c^{'}$ respectively. If $U: \clh \to \clh^{'}$ is a unitary operator such that $UT_i = T_i'U$ for all $i=1, \ldots, n$, then $U\clm= \clm^{'}$, $U\clh_{\text{nil}}=\clh_{\text{nil}}^{'}$ and $U\clh_c=\clh_c^{'}$, and $(U|_{\clm})M_i = M_i^{'}(U|_{\clm})$, $(U|_{\clh_{\text{nil}}})N_i = N_i^{'}(U|_{\clh_{\text{nil}}})$ and $(U|_{\clh_c})W_i = W_i^{'}(U|_{\clh_c})$ for all $i=1,\ldots,n$.
\end{Proposition}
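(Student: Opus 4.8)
The strategy rests on the observation that every subspace occurring in the canonical representation is defined \emph{intrinsically} from the tuple and its defect operator. Indeed, by Theorem~\ref{structure-commutative} we have $\clm = \overline{\text{span}}\{T^{\al}D_{T^*}h : h\in\clh,\ |\al|\geq m,\ \al\in\Z_+^n\}$ and $\clk := \overline{\text{span}}\{T^{\al}D_{T^*}h : h\in\clh,\ \al\in\Z_+^n\}$, with $\clh_{\text{nil}} = \clk\ominus\clm$ and $\clh_c = \clh\ominus\clk$; the analogous subspaces $\clm'$, $\clk'$, $\clh_{\text{nil}}'$, $\clh_c'$ of $\clh'$ are built from $T'$ by exactly the same formulas. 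Hence it suffices to show that the intertwining unitary $U$ carries each defining datum of $T$ to that of $T'$, after which the identification of the diagonal blocks is essentially automatic.

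First I would show that $U$ intertwines the defect operators. Since $U$ is unitary, $UT_i = T_i'U$ gives $UT_iU^* = T_i'$, and taking adjoints yields $UT_i^* = (T_i')^*U$ for every $i$. Consequently $U\bigl(I_{\clh} - \sum_{i=1}^n T_iT_i^*\bigr) = \bigl(I_{\clh'} - \sum_{i=1}^n T_i'(T_i')^*\bigr)U$, and applying the continuous functional calculus to the positive operator $I_\clh - \sum_{i=1}^n T_iT_i^*$ produces $UD_{T^*} = D_{(T')^*}U$. Combining this with the evident identity $UT^{\al} = (T')^{\al}U$, we obtain the key relation
\[
U\,(T^{\al} D_{T^*}) = \big((T')^{\al} D_{(T')^*}\big)\,U \qquad (\al\in\Z_+^n).
\]

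From this I would read off the subspace identities. As $U$ is unitary it preserves closed linear spans, and by the displayed relation it sends the generator $T^{\al}D_{T^*}h$ to $(T')^{\al}D_{(T')^*}(Uh)$, where $Uh$ ranges over all of $\clh'$ as $h$ ranges over $\clh$. Therefore $U\clm = \clm'$ and $U\clk = \clk'$. Taking orthogonal complements inside $U\clh = \clh'$ then gives $U\clh_{\text{nil}} = U(\clk\ominus\clm) = \clk'\ominus\clm' = \clh_{\text{nil}}'$ and likewise $U\clh_c = \clh_c'$. In particular $U$ respects the triple orthogonal decomposition, so that $UP_{\clm} = P_{\clm'}U$, $UP_{\clh_{\text{nil}}} = P_{\clh_{\text{nil}}'}U$ and $UP_{\clh_c} = P_{\clh_c'}U$.

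The intertwining of the diagonal entries then follows by compression. Since $\clm$ is $T$-invariant and $M_i = T_i|_{\clm}$, for $x\in\clm$ we get $(U|_{\clm})M_ix = UT_ix = T_i'Ux = M_i'(U|_{\clm})x$. For the compressions $N_i = P_{\clh_{\text{nil}}}T_i|_{\clh_{\text{nil}}}$ and $W_i = P_{\clh_c}T_i|_{\clh_c}$ one combines $UT_i = T_i'U$ with the projection identities just obtained; for instance $(U|_{\clh_c})W_i = UP_{\clh_c}T_i|_{\clh_c} = P_{\clh_c'}T_i'U|_{\clh_c} = W_i'(U|_{\clh_c})$, and identically for $N_i$. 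This carries no deep obstacle: the only points demanding genuine care are that one truly exploits the unitarity of $U$ (not merely that it intertwines $T_i$) to pass to adjoints $UT_i^* = (T_i')^*U$, and the functional-calculus step yielding $UD_{T^*} = D_{(T')^*}U$. Once these are secured, the whole conclusion is forced by the intrinsic descriptions of $\clm$, $\clh_{\text{nil}}$ and $\clh_c$.
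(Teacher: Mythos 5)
Your proposal is correct and follows essentially the same route as the paper: both arguments hinge on the intertwining relation $U T^{\al}D_{T^*} = (T')^{\al}D_{(T')^*}U$ to identify $\clm$ with $\clm'$, then identify $\clh_c$ with $\clh_c'$ (the paper via the norm characterization of $\clh_c$, you via $U\clk=\clk'$ and orthocomplements, which is equivalent), after which the block-diagonal form of $U$ forces the intertwining of the diagonal entries. Your write-up merely makes explicit the functional-calculus step $UD_{T^*}=D_{(T')^*}U$ that the paper leaves implicit.
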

\begin{proof} Clearly, $U T_j  = T_j{'} U$ and $ U T_j^*  = T_j^{'*}U$ for $ j =1, \ldots, n $, implies that $U T^{\alpha} D_{T^*} = T^{'\alpha} D_{T^{'*}}U$, $\alpha \in \Z_+^n $, and, on the other hand, we have by definition $U\clm=\clm^{'}$. Moreover, since
\[
\|T^{'*\alpha}(Uh)\|^2 = \|U T^{*\alpha} h \|^2 = \|T^{*\alpha} h \|^2,
\]
for all $\alpha \in \Z_+^n $ and $ h \in \m H_c $, it follows that $ U\m H_c = \m H_c ^{'}$, and hence $U\clh_{\text{nil}}= \clh_{\text{nil}}^{'}$. The remaining part now follows from the representation $U= \begin{bmatrix}
U|_{\clm}    &         0                &         0             \cr
0         & U|_{\clh_{\text{nil}}}   &         0             \cr
0         &         0                &    U|_{\clh_c}        \cr
\end{bmatrix}$.
\end{proof}

For convenience, and following Popescu \cite{Po13}, we introduce the following notation. Denote $\Z_+ \cup \{\infty\}$ by $\N_{\infty}$ and denote by $\clc_n$ the set of all $n$-tuples of commuting row contractions on Hilbert spaces. We define $\vp: \clc_n \raro \N_{\infty} \times \N_{\infty} \times \N_{\infty}$ as follows: Let $T$ be an $n$-tuple of commuting row contraction on $\clh$. Define
\[
\vp(T) = (p,m,q),
\]
where $m:=\deg \theta_T$,
$q:=\dim\{h \in \clh : \sum_{|\al|=k} \|{T^*}^\al h \|^2 = \|h\|^2, \;\; \mbox{for all}~ k \, \in \Z_+ \}$ and
\[ p :=
\begin{cases}
\dim (\cld_m \ominus \cld_{m+1})& \quad  \text{if } m\in \Z_+ \\
\dim \cld_{T^*}                 & \quad  \text{if } m= \infty,
\end{cases}
\]
and $\cld_m:= \overline{\text{span}}\{ T^{\al}D_{T^*}h : h\in \clh, |\al| \geq m \}$.

Clearly, if a pair $T$ and $T^{'}$ in $\clc_n$ are unitarily equivalent, then $\vp(T)= \vp(T^{'})$. For $T \in \clc_n$ such that $\vp(T) \in \N_{\infty} \times \{0\} \times \{0\}$, we have the following:

\begin{Theorem}
Let $T, T' \in \clc_n$.
\begin{enumerate}
\item [(i)] $T$ is a Drury-Arveson shift if and only if $T$ is regular and $\vp(T) \in \N_{\infty} \times \{0\} \times \{0\}$.
\item [(ii)] If $T$ and $T^{'}$ are regular and $\vp(T)=\vp(T^{'}) = (p,0,0)$ for some $p\in \N_{\infty}$, then $T$ and $T^{'}$ are unitary equivalent and $\text{rank} D_{T^*} = \text{rank} D_{T^{'*}} = p$.
\end{enumerate}
\end{Theorem}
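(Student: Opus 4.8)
The plan is to derive both parts directly from the structure theorem (Theorem~\ref{structure-commutative}) and its regular refinement (Theorem~\ref{Drury Arveson Shift}), the only genuine work being to unwind the definition of $\vp$ in the degenerate case $m=0$.

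First I would settle (i). For the forward direction, suppose $T$ is unitarily equivalent to $(M_{z_1},\ldots,M_{z_n})$ on $H^2_n(\clw)$. Since this multiplication tuple is pure, $\clh_c=\{0\}$ and so $q=0$; and its characteristic function vanishes identically by \cite[Proposition~2.6]{Po07}, so $\deg\theta_T=0$ under the convention for the zero polynomial, giving $m=0$. Hence $\vp(T)\in\N_\infty\times\{0\}\times\{0\}$. Regularity is immediate: the Gleason decomposition $H^2_n=(M_z-W)(H^2_n)^n\mathop{+}^.\C$ recalled before Definition~\ref{regularity definition} holds for every $\bw\in\B^n$, and tensoring with $\clw$ exhibits the defining splitting of Definition~\ref{regularity definition}, with complement $\clh\ominus\sum_iT_i\clh$ equal to the constant functions, valid on the whole ball and a fortiori near $0$. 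For the converse, assume $T$ is regular with $\vp(T)=(p,0,0)$. Because $m=0$, the Remark following Theorem~\ref{structure-commutative} forces $\clh_{\text{nil}}=\{0\}$, while $q=0$ forces $\clh_c=\{0\}$; thus $\clh=\clm$ and $T$ equals the tuple $M$ of its own canonical representation. By the paper's convention, $T$ regular means precisely that this $M$ is regular, so Theorem~\ref{Drury Arveson Shift} applies and $T=M$ is unitarily equivalent to the Drury--Arveson shift on $H^2_n(\cln)$.

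Next I would prove (ii). By part (i) both $T$ and $T'$ are Drury--Arveson shifts, say on $H^2_n(\cln)$ and $H^2_n(\cln')$ respectively. The step that must be done carefully is the identification of the first component of $\vp$. Since $m=0$ we have $\cld_0=\clm$, and because every $T^\al$ with $|\al|\ge 1$ factors as $T_iT^{\al-e_i}$, the space $\cld_1=\overline{\text{span}}\{T^\al D_{T^*}h:|\al|\ge 1\}$ coincides with $\sum_{i=1}^nT_i\clm=\sum_{i=1}^nM_i\clm$; hence $\cld_0\ominus\cld_1=\cln$ and $p=\dim\cln$. The same computation for $T'$ yields $p=\dim\cln'$, so $\dim\cln=\dim\cln'=p$, and any unitary $W:\cln\to\cln'$ induces $I_{H^2_n}\otimes W$, which intertwines the two shifts coordinatewise; therefore $T$ and $T'$ are unitarily equivalent. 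Finally, \eqref{eq-IM PN} gives $D_{T^*}^2=I_\clm-\sum_iM_iM_i^*=P_\cln$, whence $\cld_{T^*}=\cln$ and $\text{rank}\,D_{T^*}=\dim\cln=p$, and likewise $\text{rank}\,D_{T'^*}=p$.

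The facts about the shift (purity, vanishing of its characteristic function, and the induced intertwining unitary) are routine, so I expect the only real obstacle to be the bookkeeping: matching the abstractly defined integer $p=\dim(\cld_0\ominus\cld_1)$ simultaneously against $\dim\cln$ and $\text{rank}\,D_{T^*}$. This is exactly where the fine structure of the canonical representation, in particular \eqref{eq-IM PN}, must be invoked, and it is the one place where the $m=0$ instance of the definition of $\vp$ has to be handled with care.
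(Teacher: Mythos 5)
Your proposal is correct and follows essentially the same route as the paper: necessity of (i) via purity and the vanishing of the shift's characteristic function, sufficiency via the canonical decomposition (Theorem \ref{structure-commutative} and the Remark for $m=0$) combined with Theorem \ref{Drury Arveson Shift}, and (ii) via the multiplicity of Drury--Arveson shifts. You are somewhat more careful than the paper in two spots --- explicitly verifying regularity of the shift in the forward direction (the paper leaves this to \cite[Corollary 3.10]{EL18}) and explicitly matching $p=\dim(\cld_0\ominus\cld_1)$ with $\dim\cln$ and $\operatorname{rank}D_{T^*}$ (the paper computes $p=\dim\cle=\operatorname{rank}D_{T^*}$ directly for the model shift) --- but these are refinements of the same argument, not a different approach.
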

\begin{proof} (i) To prove the necessary part, without loss of generality, assume that $T$ is $M_z$ on $H^2_n(\cle)$ for some Hilbert space $\cle$. Observe that since $\theta_{M_z} \equiv 0$, we have $m=0$. Also note that $ D_{T^*}  = P_{\C} \otimes I_{\cle}$, which implies $\m  D_{T^*}  = \C \otimes \cle$. Since
\[
\overline{\text{span}} \{ T^{\al}D_{T^*}h : h \in \clh, \al \in \Z_+^n \} \ominus \overline{\text{span}} \{ T^{\al}D_{T^*}h : h \in \clh, |\al| \geq 1 \} = \C \otimes \cle,
\]
it follows that $p= \dim \cle = \text{rank}D_{T^*} \in \N_{\infty}$. Since $T$ is pure, $\clh_{c}=\{0\}$, and so $q = \text{dim}\clh_{c} = 0$. Thus $\vp(T) \in \N_{\infty} \times \{0\} \times \{0\}$. For the converse, assume that $T$ is regular and $\vp(T)  \in \N_{\infty} \times \{0\} \times \{0\}$. Therefore, since $ m= q=0 $, Theorem \ref{structure-commutative} implies that $\m H_{\text{nil}} =\{0\}, \m H_c = \{0\}$, that is, $T$ is a Drury-Arveson shift.

(ii)  This follows from part (i) and the fact that the multiplicity is a complete set of unitary invariant of Drury-Arveson shifts.
\end{proof}

The noncommutative version of the above result is due to Popescu \cite[Theorem 2.2]{Po13}. Now we turn to pure row contractions in $\clc_n$. The proof is completely analogous to the proof of \cite[Theorem 2.4 (i)]{Po13}.

\begin{Proposition}
Let $T$ be an $n$-tuple of commuting row contraction with polynomial characteristic function. Then $T$ is pure if and only if $\vp(T) \in \N_{\infty} \times \Z_+  \times \{0\}$.
\end{Proposition}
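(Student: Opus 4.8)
The plan is first to decode the conclusion. Since $T$ is assumed to have a polynomial characteristic function, $m=\deg\theta_T$ is finite, so $\vp(T)\in\N_\infty\times\Z_+\times\{0\}$ holds precisely when the last coordinate $q=\dim\clh_c$ vanishes; that is, the assertion is the equivalence \textsf{$T$ is pure} $\iff$ $\clh_c=\{0\}$. To treat both directions uniformly I would work with the completely positive map $\Phi_T(X)=\sum_{i=1}^n T_iXT_i^*$ and the sequence of positive contractions $\Phi_T^k(I)$, which is decreasing because $\Phi_T(I)\le I$ and $\Phi_T$ is positive and monotone. Its $\mathrm{SOT}$-limit $A:=\mathrm{SOT}\text{-}\lim_k\Phi_T^k(I)$ exists, satisfies $0\le A\le I$, and by $\mathrm{SOT}$-continuity of $\Phi_T$ on bounded sets is a fixed point, $A=\Phi_T(A)=\sum_{i=1}^n T_iAT_i^*$. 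Since $\langle\Phi_T^k(I)h,h\rangle=\sum_{|\al|=k}\gamma_\al\|T^{*\al}h\|^2$, purity of $T$ is exactly the statement $A=0$. The forward implication is then immediate: if $T$ is pure and $h\in\clh_c$, the quantity $\sum_{|\al|=k}\gamma_\al\|T^{*\al}h\|^2$ equals $\|h\|^2$ for every $k$ yet tends to $0$, forcing $h=0$; hence $\clh_c=\{0\}$ and $\vp(T)\in\N_\infty\times\Z_+\times\{0\}$.

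For the converse I would feed $\clh_c=\{0\}$ into Theorem \ref{structure-commutative}, so that the canonical decomposition collapses to $\clh=\clm\oplus\clh_{\text{nil}}$, where $M=T|_\clm$ is a pure row contraction and $N$ is nilpotent of order $\le m$. Because $\clm$ is $T$-invariant, the $(\clh_{\text{nil}},\clm)$ block of each $T_i$ vanishes, so $\clh_{\text{nil}}$ is $T^*$-invariant and $T_i^*|_{\clh_{\text{nil}}}=N_i^*$. The first step is to show $\clh_{\text{nil}}\subseteq\ker A$: for $y\in\clh_{\text{nil}}$ and $|\al|\ge m$ one has $T^{*\al}y=N^{*\al}y=0$, whence $\langle Ay,y\rangle=\lim_k\sum_{|\al|=k}\gamma_\al\|N^{*\al}y\|^2=0$, and $A\ge0$ gives $Ay=0$.

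The decisive step is to show that $A$ also annihilates $\clm$, and here the fixed-point identity does the work. For $h\in\clm$ write $T_i^*h=M_i^*h+X_i^*h$ with $M_i^*h\in\clm$ and $X_i^*h\in\clh_{\text{nil}}\subseteq\ker A$; substituting into $\langle Ah,h\rangle=\sum_i\langle AT_i^*h,T_i^*h\rangle$ and using $AX_i^*h=0$ kills all cross terms and yields the self-similar relation $\langle Ah,h\rangle=\sum_{i=1}^n\langle AM_i^*h,M_i^*h\rangle$, with every argument again in $\clm$. Iterating $k$ times produces $\langle Ah,h\rangle=\sum_{|\al|=k}\gamma_\al\langle AM^{*\al}h,M^{*\al}h\rangle\le\sum_{|\al|=k}\gamma_\al\|M^{*\al}h\|^2=\langle\Phi_M^k(I)h,h\rangle$, where $\Phi_M(X)=\sum_i M_iXM_i^*$; this tends to $0$ because $M$ is pure. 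Hence $\langle Ah,h\rangle=0$ for all $h\in\clm$, so $A$ vanishes on $\clm\oplus\clh_{\text{nil}}=\clh$, i.e. $A=0$, and $T$ is pure.

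I expect the only genuine obstacle to be the off-diagonal coupling $X_i$ between $\clm$ and $\clh_{\text{nil}}$: a direct attempt to show $\sum_{|\al|=k}\gamma_\al\|T^{*\al}h\|^2\to0$ forces one to control the lower-left blocks of $T^{*\al}$, whose expansion over the ``switch positions'' of the word $\al$ is a combinatorial mess. Passing to the limit operator $A$ and exploiting $\clh_{\text{nil}}\subseteq\ker A$ is precisely what dissolves this coupling, since the troublesome contributions land in $\clh_{\text{nil}}$ and are absorbed by the fixed-point equation; the pureness of $M$ (guaranteed by Theorem \ref{structure-commutative}) and the nilpotency of $N$ then supply the two estimates needed to conclude.
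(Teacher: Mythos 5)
Your argument is correct, and the forward direction coincides with the paper's: after unpacking $\vp(T)\in\N_{\infty}\times\Z_+\times\{0\}$ as $\clh_c=\{0\}$, a nonzero $h\in\clh_c$ has $\sum_{|\al|=k}\|T^{*\al}h\|^2=\|h\|^2$ for all $k$, which contradicts purity. For the converse the paper also reduces, via Theorem \ref{structure-commutative}, to the two-block form $T_i=\bigl[\begin{smallmatrix} M_i & * \\ 0 & N_i \end{smallmatrix}\bigr]$ on $\clm\oplus\clh_{\text{nil}}$ with $M$ pure and $N$ nilpotent, but from there it writes $T^{\al}=\bigl[\begin{smallmatrix} M^{\al} & X_{\al} \\ 0 & 0 \end{smallmatrix}\bigr]$ for $|\al|=m$ and defers to the direct estimate of \cite[Theorem 2.4(i)]{Po13} --- precisely the combinatorial control of the lower blocks of $T^{*\al}$ that you identify as the obstacle. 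Your route through the decreasing $\mathrm{SOT}$-limit $A=\lim_k\Phi_T^k(I)$, its fixed-point identity $A=\Phi_T(A)$, and the two inputs $\clh_{\text{nil}}\subseteq\ker A$ (from nilpotency of $N$, using that $\clh_{\text{nil}}$ is $T^*$-invariant) and $\langle Ah,h\rangle\le\langle\Phi_M^k(I)h,h\rangle\to 0$ on $\clm$ (from purity of $M$) is a genuinely different, self-contained replacement for that citation: the cross terms vanish because the self-adjoint positive operator $A$ kills $X_i^*h$, and a positive operator whose quadratic form vanishes on both orthogonal summands is zero. What this buys is an argument that never expands $T^{*\al}$ over the switch positions of the word $\al$; what it costs is the (routine) verification that $A$ exists and is a fixed point of $\Phi_T$. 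One small caveat: the paper's displayed definitions of ``pure'' and of $\clh_c$ omit the multinomial weights $\gamma_{\al}$, whereas you work with $\Phi_T^k(I)=\sum_{|\al|=k}\gamma_{\al}T^{\al}T^{*\al}$; since $\gamma_{\al}\ge 1$, your conclusion $A=0$ still implies purity in the unweighted sense and the forward direction is insensitive to the weights, so nothing breaks.
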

\begin{proof} Assume that $T$ is pure. Consider the canonical representation of $T$ on $\clm \oplus \clh_{\text{nil}} \oplus \clh_c$ as in Theorem \ref{structure-commutative}. For each $h \in \clh_c$, it follows that $T^{*\al} h = W^{*\al}h$ and hence
\[
\|h\|^2 = \sum_{|\al|=k} \|W^{*\al}h \|^2 = \sum_{|\al|=k} \|T^{*\al} h\|^2,
\]
for all $k \in \N$. Since $T$ is pure, this implies that $\clh_c = \{0\}$, that is, $\vp(T) \in \N_{\infty} \times \Z_+ \times \{0\}$.
Conversely, if $\varphi(T) \in \N_{\infty} \times \Z_+ \times \{0\}$, then $\clh_c = \{0\}$. The canonical representation of $T$ as in Theorem \ref{structure-commutative} then becomes $T_i = \begin{bmatrix}
M_i  &   *   \cr
0        &  N_i  \cr
\end{bmatrix}$ on $\clh = \clm \oplus \clh_{\text{nil}}$. Suppose $m$ is the order of the nilpotent operator $N$. Then for each $\alpha \in \Z_+^n$, $|\al| = m$, there exists $X_{\al} \in \clb(\clh_{\text{nil}},\clm) $ such that $ T^{\al} = \begin{bmatrix}
M^{\al}  &    X_{\al}   \cr
0          &    0         \cr
\end{bmatrix}$. By a computation similar to that in \cite[Theorem 2.4 (i)]{Po13}, we obtain that $T$ is pure.
\end{proof}

Along similar lines, most of Popescu's results in \cite[Section 2]{Po13} hold in a similar way for $n$-tuples of commuting contractions. We only point one which needs an additional assumption.

\begin{Theorem}
Let $T$ be an $n$-tuple of commuting contractions on a Hilbert space with polynomial characteristic function. If $T$ is regular, then the following are equivalent:
\begin{enumerate}
\item $\theta_T$ is constant.
\item $\varphi(T) \in \N_{\infty} \times \{0\} \times \N_{\infty} $.
\item The canonical decomposition of $T$ is given by: $T_i= \begin{bmatrix}
M_i   &    *    \cr
 0        &    W_i  \cr
\end{bmatrix}$ on  $\clh = \m M \oplus \clh_c$, $i=1,\ldots,n$, where $(M_1,\ldots, M_n)$ is a Drury-Arveson shift on $\m M$ and $(W_1,\ldots,W_n)$ is a spherical co-isometry on $\clh_c$.
\end{enumerate}
\end{Theorem}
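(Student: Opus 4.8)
The plan is to run the cycle (1) $\Leftrightarrow$ (2), then (2) $\Rightarrow$ (3), then (3) $\Rightarrow$ (1), leaning almost entirely on the structural and factorization theorems already established. I would begin by recording that (1) and (2) are equivalent essentially by definition: since $\varphi(T) = (p,m,q)$ with $m = \deg \theta_T$, the membership $\varphi(T) \in \N_\infty \times \{0\} \times \N_\infty$ says exactly that $m = 0$, and by the convention fixed in Section \ref{sect-2} this is precisely the statement that $\theta_T$ is a polynomial of degree zero, i.e.\ that $\theta_T$ is constant. No computation is needed here; the first and third coordinates of $\varphi(T)$ lie in $\N_\infty$ automatically.

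Next I would prove (2) $\Rightarrow$ (3). Assuming $m = 0$, I would invoke the Remark following Theorem \ref{structure-commutative}, which tells us that $m = 0$ forces $\clh_{\text{nil}} = \{0\}$ and $N_i = 0$ for all $i$. Thus the canonical decomposition of Theorem \ref{structure-commutative} collapses to $\clh = \clm \oplus \clh_c$ with
\[
T_i = \begin{bmatrix} M_i & * \\ 0 & W_i \end{bmatrix} \qquad (i = 1, \ldots, n),
\]
where $M$ on $\clm$ is a pure row contraction and $W$ on $\clh_c$ is a spherical co-isometry. Because $T$ is regular, $M$ is regular in the sense of our convention, so the final clause of Theorem \ref{structure-commutative} upgrades $M$ to a Drury--Arveson shift. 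This is exactly assertion (3).

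For (3) $\Rightarrow$ (1) I would read the hypothesis through Theorem \ref{converse}, taking $\clh_1 = \clm$, $\clh_0 = \{0\}$ and $\clh_{-1} = \clh_c$, so that the middle nilpotent tuple $N$ acts on the trivial space. With $S = M$ a Drury--Arveson shift and $C = W$ a spherical co-isometry, the second half of Theorem \ref{converse} produces
\[
\theta_T = G_1 \begin{bmatrix} \theta_N & 0 \\ 0 & I_{H^2_n \otimes \m E} \end{bmatrix} G_2,
\]
with $G_1$ a co-isometry and $G_2$ a partial isometry. Since $\clh_0 = \{0\}$, the tuple $N$ is trivial and $\theta_N$ vanishes, so the middle factor is constant; and the outer factors $G_1, G_2$ are themselves constant (degree-zero) multipliers, being assembled as $I_{H^2_n} \otimes (\,\cdot\,)$ from fixed unitaries together with $\theta_S \equiv 0$ and $\theta_C \equiv 0$. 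Hence $\theta_T$ is a product of constant multipliers and therefore constant, which is (1). Alternatively, one may simply observe that a trivial nilpotent part has order $0$, so the remark following Theorem \ref{converse} already yields $\deg \theta_T \leq 0$.

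The one place that deserves care is this last step: I must be certain that collapsing the nilpotent block removes all dependence on $\bz$. That amounts to checking that the factors $G_1$ and $G_2$ delivered by the factorization carry no positive-degree multiplier content, which holds because each is built by tensoring $I_{H^2_n}$ with fixed operators and with the identically-zero characteristic functions $\theta_S$ and $\theta_C$. Granting this, the remainder is routine bookkeeping with the definition of $\varphi$ and the earlier theorems.
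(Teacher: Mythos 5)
Your proposal is correct and follows the same route the paper intends: its proof is a one-line appeal to the definition of $\varphi$ (your step (1)$\Leftrightarrow$(2)) and to the canonical representation of Theorem \ref{structure-commutative} together with the remark that $m=0$ forces $\clh_{\text{nil}}=\{0\}$ (your step (2)$\Rightarrow$(3)). Your closing implication (3)$\Rightarrow$(1) via the factorization of Theorem \ref{converse} with $\clh_0=\{0\}$ is a sound way to make explicit what the paper leaves unstated, and your check that $G_1$, $G_2$ and the middle block are constant multipliers is exactly the point that needs verifying there.
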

\begin{proof} The proof follows from the definition of the map $\varphi$ and the canonical representation of the row contraction $T$ with polynomial characteristic function.
\end{proof}

\newsection{An example}\label{sect-7}

In this section, we provide an example of a commuting tuple, which is a partial isometry with wandering subspace property, but whose characteristic function is not a polynomial. Therefore, the tuple is not unitarily equivalent to a Drury Arveson shift.
This justifies the presence of the regularity assumption in the
Theorem \ref{structure-commutative}.

We consider a subspace of $H^2_2$ which is invariant under the Drury-Arveson shift. That is,
\[\mathcal{M}:= \bigoplus_{n\geq 2} \mathbb{H}_n \subseteq H^2_2, \] where $\mathbb{H}_n$ denotes the class of homogeneous polynomials of degree $n$ and a commuting pair of bounded linear operators $V=(V_1,V_2)$ on $\mathcal{M}$, defined by
\[V_i:= M_{z_i}|_{\mathcal{M}}\quad \quad \text{for}\,\,\, i=1,2.\]

By using the definition of the adjoint of the Drury-Arveson shift, 
it is trivial to observe that for each $i=1$ and $2$, 
\[ V_iV_i^* \bm z^{\alpha} = \begin{cases} \frac{\alpha_i}{|\alpha|} \bm z^{\alpha} & \mbox{if}~ |\alpha| \geq 3
\\
\displaystyle 0 &  \mbox{otherwise}. \end{cases}\]
From the definition of $V_i$'s, one can easily derive $D_{V^*}^2 = I - \sum_{i=1}^2 V_iV_i^* = P_{\mathbb{H}_2}$, where $P_{\mathbb{H}_2}$ is an orthogonal projection onto the subspace $\mathbb{H}_2$. Hence, $V$ is a row contraction on $\mathcal{M}$, and we recall the expression of the characteristic function of $V$, given in \ref{eq-ch fn commutative}, and the Taylor series expansion, that is,
\begin{align*}
\Theta_V(z) &= [-V + D_{V^*}(I - ZV^*)^{-1}ZD_V]|_{\mathcal{D}_V} \\
            &= (-V + \sum_{|\alpha| \geq 1} \Theta_{V,\alpha} \bm z^{\alpha})|_{\mathcal{D}_V},
\end{align*} where for each $\alpha$ with $|\alpha| \geq 1$ the coefficients
$\Theta_{V,\alpha} = \sum_{i=1}^2 \gamma_{\alpha - e_i} D_{V^*}V^{*(\alpha -e_i)}P_iD_V$. Also due to the fact that, $\mbox{Im}V \subseteq \bigoplus_{n \geq 3} \mathbb{H}_n$, we have $VD_V = D_{V^*}V = P_{\mathbb{H}_2}V = 0$.

On the other hand, from the definition of the defect operator $D_V^2: \mathcal{M} \oplus \mathcal{M} \to \mathcal{M} \oplus \mathcal{M}$, the action on the elements $( z_1^{\alpha_1}, z_1^{\alpha_1} )^{tr}$ with $\alpha_1 \geq 2$ is the following
\[ D_V^2 \begin{bmatrix}
z_1^{\alpha_1} \cr z_1^{\alpha_1}
\end{bmatrix} = \begin{bmatrix}
                 I-V_1^*V_1  &   -V_1^*V_2 \cr
                 -V_2^*V_1   &   I-V_2^*V_2
                \end{bmatrix} \begin{bmatrix}
                              z_1^{\alpha_1} \cr z_1^{\alpha_1}
                              \end{bmatrix}
                              = \begin{bmatrix}
                           \frac{-\alpha_1}{\alpha_1+1} z_1^{\alpha_1-1}z_2
                       \cr \frac{-\alpha_1}{\alpha_1+1} z_1^{\alpha_1}
                             \end{bmatrix}. \]
Now, for any $\alpha_1 \geq 2$, we consider $\beta= (\alpha_1-1,0)$ and we have
\begin{align*}
\Theta_{V,\beta} \begin{bmatrix}
                           \frac{-\alpha_1}{\alpha_1+1} z_1^{\alpha_1-1}z_2
                       \cr \frac{-\alpha_1}{\alpha_1+1} z_1^{\alpha_1}
                             \end{bmatrix}
 &= \gamma_{\beta - e_1}P_{\mathbb{H}_2}V_1^{*(\alpha_1-2)}
  \big( \frac{-\alpha_1}{\alpha_1+1} z_1^{\alpha_1-1}z_2 \big) \\
 &= \gamma_{\beta - e_1}P_{\mathbb{H}_2} \big( -c_{\alpha_1} z_1z_2 \big)\\
 &= d_{\alpha_1} z_1z_2,
\end{align*} where $d_{\alpha_1}= - \gamma_{\beta - e_1}c_{\alpha_1}$ for some non-zero constant $c_{\alpha_1}$. Hence, $\Theta_{V,\beta} \neq 0$. Moreover, we can conclude that for each $\alpha=(\alpha_1,0)$ with
$\alpha_1 \geq 2$, $\Theta_{V,\beta} \neq 0$ where $\beta=(\alpha_1-1,0)$. In other words, there are infinitely many $\beta$'s for which $\Theta_{V,\beta} \neq 0$, that is, the characteristic function $\Theta_V$ is not a polynomial.

Following the above calculation, it is straightforward to conclude that
$V$ is a pure partial isometry, but it is not unitary equivalent to Drury-Arveson shift as its characteristic function is not the zero polynomial. By [Corollary 3.10, \cite{EL18}], it follows that the tuple $V=(V_1,V_2)$ is not regular in the sense of Definition  \ref{regularity definition}.

\vspace{0.1in} \noindent\textbf{Acknowledgment:} The first named author likes to acknowledge Dr. B.K. Das for some fruitful discussions. His research is supported by the institute Post-Doctoral Fellowship of Indian Institute of Technology, Bombay.  The research of the second named author is supported by DST-INSPIRE Faculty Fellowship No. DST/INSPIRE/04/2014/002624, and he is also grateful to Indian Statistical Institute (Bangalore) for the warm hospitality during his visits to Indian Statistical Institute (Bangalore). The research of the third named author is supported in part by NBHM grant NBHM/R.P.64/2014, and the Mathematical Research Impact Centric Support (MATRICS) grant, File No: MTR/2017/000522 and Core Research Grant, File No: CRG/2019/000908, by the Science and Engineering Research Board (SERB), Department of Science \& Technology (DST), Government of India.

\end{document}